\newtheorem{thm}{Theorem}[section]
\newtheorem{cor}[thm]{Corollary}
\newtheorem{lem}[thm]{Lemma}
\newtheorem{prop}[thm]{Proposition}
\newtheorem{quest}[thm]{Question}
\newtheorem{prob}[thm]{Problem}
\newtheorem{prin}[thm]{Principle}
\theoremstyle{definition}
\newtheorem{Def}[thm]{Definition}
\newtheorem{rem}[thm]{Remark}
\newtheorem*{ack}{Acknowledgement}
\numberwithin{equation}{section}
\numberwithin{figure}{section}
\def\Hom{{\text{\rm{Hom}}}}
\def\End{{\text{\rm{End}}}}
\def\rchi{{\hbox{\raise1.5pt\hbox{$\chi$}}}}
\def\Aut{{\text{\rm{Aut}}}}
\def\isom{\cong}
\def\tensor{\otimes}
\def\dsum{\oplus}
\def\Ker{{\text{\rm{Ker}}}}
\def\Coker{{\text{\rm{Coker}}}}
\def\const{{\text{\rm{const}}}}
\def\deg{{\text{\rm{deg}}}}
\def\rank{{\text{\rm{rank}}}}
\def\a{\alpha}
\def\b{\beta}
\def\o{\omega}
\def\sig{\sigma}
\def\lam{\lambda}
\def\Lam{\Lambda}
\def\gam{\gamma}
\def\Gam{\Gamma}
\def\e{\epsilon}
\def\Jac{{\text{\rm{Jac}}}}
\def\Spec{{\text{\rm{Spec}}}}
\def\Proj{{\text{\rm{Proj}}}}
\def\Pic{{\text{\rm{Pic}}}}
\def\Ext{{\text{\rm{Ext}}}}
\def\Jac{{\text{\rm{Jac}}}}
\def\lcm{{\text{\it{LCM\,}}}}
\newcommand{\bea}{\begin{eqnarray}}
\newcommand{\eea}{\end{eqnarray}}
\newcommand{\be}{\begin{equation}}
\newcommand{\ee}{\end{equation}}
\newcommand{\Mbar}{{\overline{\mathcal{M}}}}
\newcommand{\bP}{{\mathbb{P}}}
\newcommand{\bC}{{\mathbb{C}}}
\newcommand{\bE}{{\mathbb{E}}}
\newcommand{\bF}{{\mathbb{F}}}
\newcommand{\bL}{{\mathbb{L}}}
\newcommand{\bQ}{{\mathbb{Q}}}
\newcommand{\bR}{{\mathbb{R}}}
\newcommand{\bZ}{{\mathbb{Z}}}
\newcommand{\cM}{{\mathcal{M}}}
\newcommand{\cD}{{\mathcal{D}}}
\newcommand{\cE}{{\mathcal{E}}}
\newcommand{\cF}{{\mathcal{F}}}
\newcommand{\cL}{{\mathcal{L}}}
\newcommand{\cO}{{\mathcal{O}}}
\newcommand{\cS}{{\mathcal{S}}}
\newcommand{\cU}{{\mathcal{U}}}
\newcommand{\la}{{\langle}}
\newcommand{\ra}{{\rangle}}
\newcommand{\half}{{\frac{1}{2}}}
\newcommand{\bq}{{\mathbf{q}}}
\newcommand{\rar}{\rightarrow}
\newcommand{\lrar}{\longrightarrow}
\newcommand{\hxi}{{\hat{\xi}}}
\newcommand{\bite}{\begin{itemize}}
\newcommand{\eite}{\end{itemize}}
\newcommand{\benu}{\begin{enumerate}}
\newcommand{\eenu}{\end{enumerate}}
\begin{document}
\large
\setcounter{section}{-1}


\title[In Search of a Hidden Curve]
{In Search of  a Hidden Curve}

\author[M.~Mulase]{Motohico Mulase}
\address{Motohico Mulase:
Department of Mathematics\\
University of California\\
Davis, CA 95616--8633}
\email{mulase@math.ucdavis.edu}
\address{and Kavli Institute for the Physics and Mathematics of the Universe (WPI)\\
The University of Tokyo\\
 Kashiwa 277-8583, Japan}

\begin{abstract} 
It has been noticed since around 2007 that certain enumeration problems
can be solved when an analytic or algebraic curve is identified. 
This curve is the key to 
the problem. In these lectures, 
a few such examples are presented. One is a detailed account on counting
simple Hurwitz numbers, explaining
how the problem was solved by discovering this key  curve. The formula for
the curve allows us to write
the generating functions of Hurwitz numbers in terms of
polynomials. This unexpected polynomiality  produces, as a byproduct, 
straightforward and short
proofs of the Witten-Kontsevich theorem and the 
$\lambda_g$-theorem of Faber-Pandharipande. 
An analogous enumeration problem associated with Catalan 
numbers is also presented, which has a simpler feature in terms of analysis. 
The asymptotic behavior of  counting leads this time 
to the Euler characteristic of the moduli spaces of smooth curves.  
We then discuss another enumeration problem,  
the
Ap\'ery sequences. The  quest of
identifying the   hidden  curve for this case   remains open.

These curves, also  known as  \emph{spectral curves},
are discovered  via solving
ordinary differential equations. 
The counting problem of geometric origin associated with the
\emph{genus $0$, one marked point} case is encoded in the spectral curve. 
It is explained that going from the $(g,n)=(0,1)$-case to arbitrary 
$(g,n)$ is a process of \emph{quantization}
of the spectral curve.

This 
perspective of quantization is   discussed in a geometric setting,
when the differential equations are linear with  holomorphic
coefficients, in terms  of Higgs bundles, opers, and Gaiotto's conformal 
limit construction. In this context, however, there are no counting problems behind the 
scene. 
 \end{abstract}

\subjclass[2010]{Primary: 14N35, 81T45, 14N10;
Secondary: 53D37, 05A15}

\keywords{Mirror symmetry, integrable system, Hurwitz number, 
Catalan number, spectral curve, oper, Higgs bundle,
moduli space of curves, Laplace transform, 
quantum curve, Picard-Fuchs equation, Ap\'ery sequence.}

\thanks{Illustrations are created by
the author.}

\thanks{The research 
of the author has been supported by NSF-FRG grant DMS-2152257.}

\maketitle

\tableofcontents

\allowdisplaybreaks

\section{Introduction}

\subsection{The story begins with Catalan numbers and their Laplace transform}

Let us start with two differential equations
\begin{align}
\label{PF}
&\left((x^2-4)\;\frac{d^2}{dx^2} + x\; \frac{d}{dx} -1
\right) z(x) = 0,
\\
\label{HW}
&\left(\hbar^2 \frac{d^2}{dx^2} + \hbar x\frac{d}{dx}+  1\right)
\Psi(x,\hbar)
=0.
\end{align}

\begin{quest}
What is the common algebraic curve hidden behind these two 
equations?
\end{quest}

The first equation is easy to solve. Obviously $x$ itself is a solution,
and $\sqrt{x^2-4}$ also solves it. We can thus choose
$$
z(x) = \frac{x\pm \sqrt{x^2-4}}{2}
$$
as a basis for all solutions. It then reminds us of the quadratic formula for
$z^2-xz+1=0$,
and its possible relation to the second equation through the \emph{Weyl
quantization}
$
\begin{cases}
z\longmapsto -\hbar\frac{d}{dx},\\
x\longmapsto x.
\end{cases}
$
Equivalently, we can find the same  polynomial from 
the \emph{semi-classical limit} of \eqref{HW}:
$$
\lim_{\hbar\rar 0}e^{-\frac{S_0(x)}{\hbar}}
\left(\hbar^2 \frac{d^2}{dx^2} + \hbar x\frac{d}{dx}+  1\right)
e^{\frac{S_0(x)}{\hbar}} = \big(S_0(x)'\big)^2 + x S_0(x)' + 1, \quad 
z=-S_0(x)'.
$$

\begin{figure}[htb]
\includegraphics[height=1.5in]{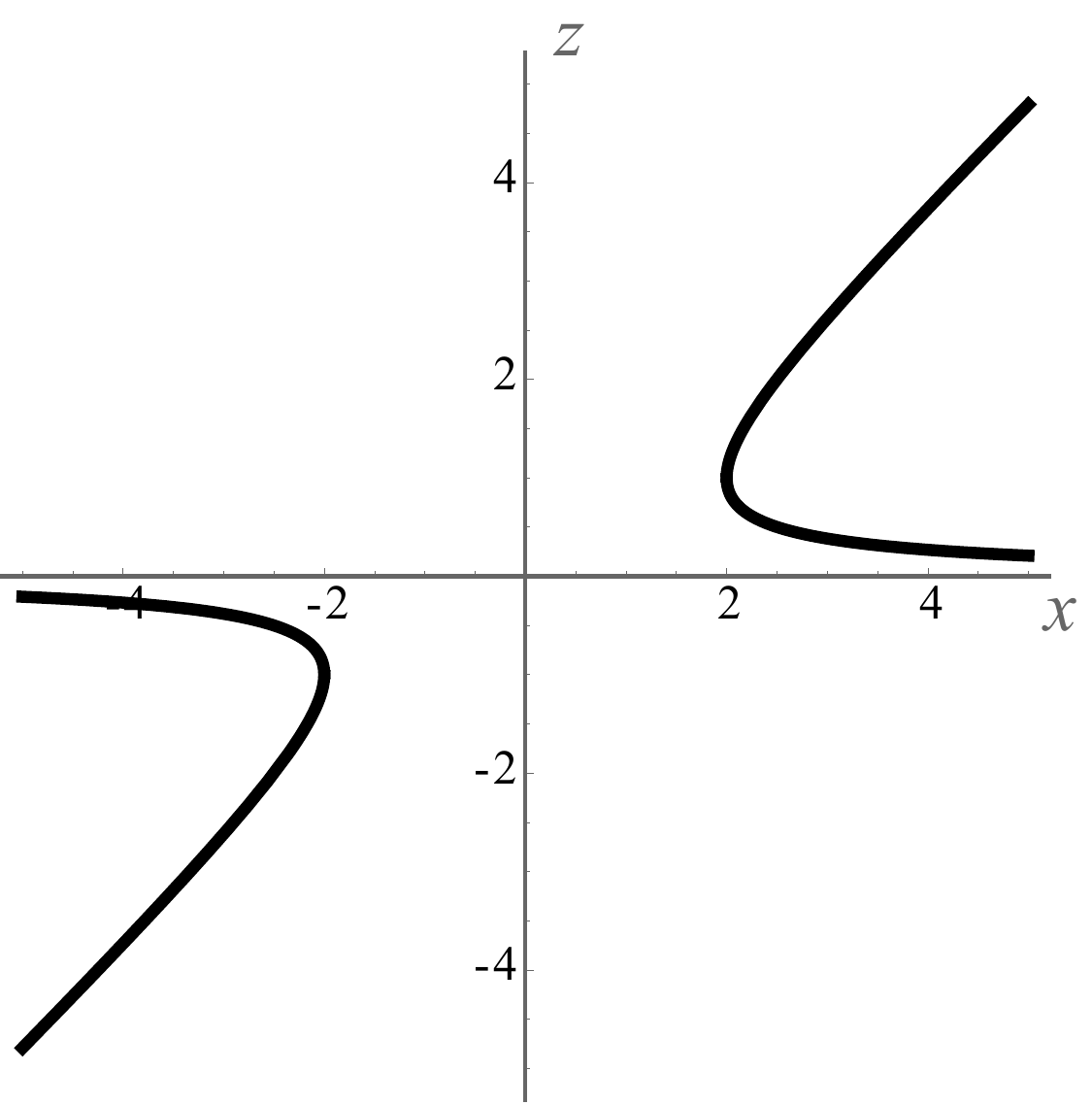}
\caption{The mirror dual spectral curve $\Sigma$ of the Catalan numbers.
}
\label{fig:xz}
\end{figure}

The curve in common appears to be  
$
x = z+\frac{1}{z}.
$
Is there anything significant here? We will see that this curve tells  
 us a rich story of geometry that is \emph{not}  obvious from the 
shape of these differential equations.
As explained in the main text \eqref{z(x)},  $z(x)$ 
is an unconventional 
generating function of Catalan numbers $C_m: 1, 2, 5, 14, 42, 132, 429,
1430, 4862,16796\dots$ One interpretation of these numbers 
is the count of \emph{cell-decompositions} of a two-dimensional
sphere $S^2$ with one $0$-cell and $m$ $1$-cells, while not allowing rotation
symmetry   to avoid complication coming from 
automorphism count (see Remark~\ref{categorical}).

\begin{figure}[htb]
\label{01graph}
\includegraphics[width=1.5in]{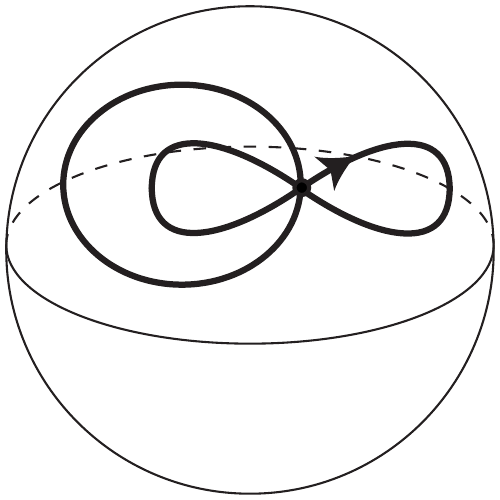}
\caption{A cell-decomposition of a curve of genus $0$ with
one $0$-cell. The arrow on a half-edge indicates no rotation symmetry is allowed.}
\end{figure}

From this point of view, the counting can be generalized
to cell-decompositions on an
arbitrary compact oriented surface of genus $g$ with
$n$ $0$-cells.  This is the story of Section~\ref{subsect:Catalan},
based on  \cite{CMS, OM4, DMSS}.
An unexpected fact is that
the generating functions $F_{g,n}(t_1,\dots,t_n)$ of the 
numbers of these cell-decompositions
are \emph{\textbf{Laurent polynomials}} of degree
$6g-6+3n$, when $2g-2+n>0$. 
This \emph{polynomiality} is
surprising, and happens only in 
the right choice of variables  defined by
 the above $z(x)$ as in Theorem~\ref{thmCatalan}. 
It gives  a key to relate the story with many different 
subjects of the geometry of moduli spaces. 
We will show that these Laurent polynomials know,
as their special values,
the Euler characteristic $\rchi(\cM_{g,n})$ of the moduli spaces
of smooth pointed curves first
calculated by Harer-Zagier \cite{HZ}, and  the 
Witten-Kontsevich formula for intersections of the 
 tautological cotangent classes on $\Mbar_{g,n}$
\cite{DVV,K1992,W1991} through their asymptotic behavior.

What do these geometric quantities of moduli spaces
 have anything to do with the 
differential equation  \eqref{HW} above?
We will see that 
its solution $\psi(x,\hbar)$   is the \emph{generating function} of these
generating functions $F_{g,n}$ for all $g\ge0$ and $n>0$.
In this context, since the curve $x=z+1/z$ is the \textbf{semi-classical limit}
of  \eqref{HW},  the differential equation  is
called the \textbf{quantum curve} \cite{ADKMV, DHS, Holl}
associated with the  classical curve $x=z+1/z$.
The Catalan numbers are the count of cell-decompositions of the
$(g,n)=(0,1)$ geometry. Hence going from $z(x)$ to 
$F_{g,n}(t_1,\dots,t_n)$ for arbitrary $(g,n)$ is the process of 
\emph{quantization}, in  analogy of counting graphs of higher
genera as \emph{Feynman diagrams} representing interactions. 

At the same time, the fact that generating functions 
$F_{g,n}(t_1,\dots,t_n)$ know the $\psi$-class
intersection numbers of  $\Mbar_{g,n}$ indicates that 
they give the Gromov-Witten invariants $GW_{g,n}(\bullet)$
of a point. Therefore, from the point of view of 
Mari\~no \cite{Mar} and Bouchard-Klemm-Mari\~no-Pasquetti
\cite{BKMP}, we understand that the spectral curve $x=z+1/z$ is
the \emph{mirror} $B$-model, corresponding to the 
 $A$-model on a   \emph{trivial} symplectic manifold, i.e., a point.
From the Catalan numbers to this point of view of mirror symmetry
is the theme of Section~\ref{subsect:Catalan}.

The polynomiality of some quantities appearing in 
enumerative problems associated with the moduli spaces $\Mbar_{g,n}$
was  discovered in 
an earlier paper \cite{MZ},  which was the key to the solution
\cite{EMS} of  the 
Hurwitz number conjecture of Bouchard and Mari\~no \cite{BM}.
The polynomiality of the generating functions of 
simple Hurwitz numbers established in \cite{MZ} presented 
another surprise: it gives  simple few-line proofs of Witten's conjecture
\cite{W1991} and the $\lambda_g$-conjecture of Faber-Pandharipande
\cite{FP1, FP2}. 

The stories coming out of Hurwitz numbers 
are weaved in Section~\ref{sect:Hurwitz}.
The hidden curve in this context is the \textbf{Lambert curve} \eqref{eq:Lambert},
whose role in discovering the polynomiality is explained in detail.
Bouchard and Mari\~no \cite{BM} identified the Lambert curve through the 
limiting process of the \emph{mirror curves} of 
toric Calabi-Yau $3$-folds \cite{BKMP}, and using Lagrange's Inversion Formula \cite{WW1927}.
Appendix gives an elementary analysis that leads to this identification of 
the Lambert curve.
In Section~\ref{Coda}, we will show that the Lambert curve
is  a simple consequence of \emph{tree counting}, 
and the formula for the curve, i.e., the \emph{Lambert
function}, directly follows from a combinatorial identity
of trees.

Our point is that all these known results are obtained by finding the
hidden curve behind the scene, the \textbf{spectral curve}
of the counting problem. 
And the spectral curve is always the generating function 
of the type $(0,1)$-invariants of the problem.
The \textbf{quantum curve} is a family of $\hbar$-deformations of 
a differential operator whose 
\emph{semi-classical limit} is the spectral curve. 
We will explain this relation for the case of  Catalan numbers 
and Hurwitz numbers in detail in the main text.

\begin{rem} We will use the terminology \textbf{Laplace Transform}
in a slightly more general context.
\bite

\item The process from the sequence $\{C_m\}_{m=0}^\infty$
of Catalan numbers
to their \emph{generating function} and its inverse function
$$
z(x) = \sum_{m=0}^\infty \frac{C_m}{x^{2m+1}}, \qquad x=z+\frac{1}{z}
$$
is considered as the \textbf{Laplace transform} in these lectures. 
This is an idea developed over the years (see for example, \cite{CMS,DMSS, MP2012,MS2008,MS2015,MZ}).
The rationale behind it 
is that when a sequence satisfies a combinatorial relation,  we can take
the Laplace transform of the relation. Often the result becomes
a system of differential equations. Therefore, the \textbf{Laplace transform
changes combinatorics to geometry}. 

\item This effect is explained  in Section~\ref{sect:Hurwitz}
using Hurwitz numbers.
The starting sequence is the number of \emph{rooted trees} on 
$k$ nodes and their generating function, 
$$
y(x) = \sum_{k=1}^\infty \frac{k^{k-1}}{k!} x^k. 
$$
We will show that a simple combinatorial identity of tree counting 
allows us to identify its inverse function, the Lambert curve, $x=y e^{-y}$,
in Section~\ref{Coda}.

\item Both $y(x)$ and $z(x)$ above land on $\bP^1$, so we use an automorphism of
$\bP^1$ to bring these variables to the ``right'' coordinate, which we write
as $t\in \bP^1$. The key of Section~\ref{subsect:Catalan} is that
the Laplace transform of the genus $g$, $n$ marked point version of the 
Catalan numbers $C_{g,n}(\mu_1,\dots,\mu_n)$ 
for $2g-2+n>0$ is a Laurent polynomial
in the $t$-variables. The special values of these Laurent polynomials 
are identified as the Euler characteristic $\rchi(\Mbar_{g,n})$ of the
moduli space of smooth pointed curves \cite{MP2012}, and
the asymptotic behavior of these Laurent polynomials recovers the 
$\psi$-class intersection numbers on the moduli space of 
stable curves $\Mbar_{g,n}$ \cite{CMS}.

\item In Section~\ref{Fugue2} it is explained
that the Laplace transform of the $(g,n)$-Hurwitz number in the 
stable region $2g-2+n>0$ is a polynomial in the $t$-variables. 
 We will prove that the Laplace transform of the combinatorial
formula, the \emph{cut-and-join} equation of \cite{GJ,V}, 
is a differential equation, and that it automatically
proves the Witten-Kontsevich theorem
on the $\psi$-class intersection numbers  and the Faber-Pandharipande theorem
on the $\lambda_g$-conjecture \cite{MZ}, through the ELSV formula
\cite{ELSV}.

\item So what is the Laplace transform, after all? Our thesis
in Section~\ref{subsect:Laplace}  is that the
\textbf{Laplace transform is the mirror symmetry}. 
\eite
\end{rem}


\subsection{A miraculous integer sequence}

In the final Section~\ref{zeta3}, we explore  stories from  uncharted territories. Let 
$H_m = 1+\half + \frac{1}{3}+\cdots +\frac{1}{m}$ be the
$m$-th harmonic number. We use the convention $H_0=0$. 
For every $n\ge 0$,  define
$$
\widetilde{GW}_{0,1}(n):= (-1)^n (n!)^2 \sum_{\substack{\ell+m=n\\
\ell,m\ge 0}}
\frac{(2\ell+m)! (\ell+2m)!}
{(\ell !)^5  (m!)^5}\bigg(1+(m-\ell)\big(H_{2\ell+m} 
+2H_{\ell + 2 m} -5 H_m\big)\bigg)
$$ and
$$
A_n:= \sum_{\ell=0}^n\binom{n}{\ell}^2 \binom{n+\ell}{\ell}^2.
$$
Then we have (see \cite{CCGK,GZ,Zagier}):
\be
\label{mysteryintro}
A_n=\widetilde{GW}_{0,1}(n) \qquad \text{for all }\;n.
\ee
The first few terms are: $1, 5, 73, 1445, 33001, 819005,21460825,
584307365,16367912425,\dots$. What is unbelievable 
about this equality is that $\widetilde{GW}_{0,1}(n)$ as defined 
is a \textbf{positive integer}, and has a simple expression
as in the second line.
What we know about \eqref{mysteryintro} so far includes the following:
\bite

\item The formula comes from the \textbf{mirror symmetry}, 
a geometric relation between the Gromov-Witten invariants of a 
complex $3$-dimensional variety
and the Gauss-Manin connection associated with its mirror partner. 
In this particular context, we refer to Golyshev-Zagier \cite{GZ}
and
 Zagier \cite{Zagier} for an inspiring account of this interplay.
 
\item The quantity  $\widetilde{GW}_{0,1}(n)$, the \textbf{quantum
period},  was obtained by 
Coates, Corti, Galkin, and Kasprzyk \cite{CCGK}, which is
essentially 
the same  as the
genus $0$, $1$-marked point case of the degree $n$
 Gromov-Witten invariant
of a Fano $3$-fold known as $V_{12}$. The generating function of
these numbers, after appropriate modifications including
the Borel-Laplace transform,  satisfies a \textbf{Quantum Differential Equation}. A brief background  will be explained  in 
 Section~\ref{zeta3}.
 
\item The sequence $\{A_n\}_{n=0}^\infty$ was discovered by 
Ap\'ery in 1978 \cite{A} in his \textbf{proof of the irrationality} of 
a special value 
$\zeta(3)$ of the 
Riemann zeta function. A quick review of his proof,
including the significance of the integer sequence 
$\{A_n\}_{n=0}^\infty$, is presented in Section~\ref{subsect:Apery}.

\item The differential equation that determines  the generating function of 
$\{A_n\}_{n=0}^\infty$   was discovered to be the \textbf{Picard-Fuchs equation} of
a particular $1$-parameter family of K3 surfaces by 
Beukers and Peters \cite{BP}.
\eite

What we can hope to see happens, yet still not established, is the following:
\bite
\item There should be a  \textbf{spectral curve} hidden behind the scene,
determined by the type $(0,1)$ Gromov-Witten invariants of $V_{12}$. 
This should essentially be the same object as the Picard-Fuchs
equation mentioned above, being on the $B$-model side.
\item There should be an extension of  the \textbf{quantization procedure}, explained in
Section~\ref{subsect:oper} for 
differential equations with holomorphic coefficients, to differential equations with 
\emph{irregular singular points}. This is the story of \textbf{quantum 
curves}.
\eite

\subsection{The unique quantization  for the holomorphic cases}
The correspondence between classical systems and quantum systems
is never one-to-one. If we imagine the classical system
to be the limit of a quantum system 
as
the Planck constant
$\hbar\rar 0$, then we can add 
anything multiplied with $\hbar$  to the quantum equation. 
This addition vanishes in this limit. 

It is therefore rather counter intuitive that there is a bijective,
and even biholomorphic, correspondence between spectral curves as classical systems
and quantum curves as quantum systems, discovered
in  \cite{DFKMMN,OM5},
prompted by the idea of Gaiotto \cite{Gai2014}.
Suppose we have a  holomorphic 
$n$-th order ($n\ge 1$) linear ordinary differential
operator $P$ \emph{globally defined} on a smooth complete 
complex algebraic curve $C$ 
of genus $g(C)>1$, whose leading coefficient does not
vanish anywhere on $C$. One interpretation of the above 
result   proves that 
it determines  
a unique algebraic curve $\pi: \Sigma \lrar C$, known 
as a \emph{spectral curve} of a \emph{Higgs} bundle, that recovers $P$ through
the \emph{conformal limit construction} of Gaiotto \cite{Gai2014}.
The passage from $P$ to $\Sigma$ includes: 
\bite
\item Identification
of the canonical (i.e.,  unique) $\hbar$-deformation family $P^\hbar$
of $P$ such that $P$  appears in this family at $\hbar=1$; and 
\item Calculating its \emph{semi-classical limit} as $\hbar\rar 0$ that 
selects $\Sigma$ as the classical geometric object corresponding to this
family of deformations.
\eite
This is counter intuitive because a family $P^\hbar$ can determine $P$ 
at $\hbar=1$, but not in the other way around. Also, from classical $\Sigma$
to a quantum $P^\hbar$ is usually never unique.

In terms of a local coordinate $z$ of a local neighborhood
 $U\subset C$, $P$ is a noncommutative
polynomial in $d/dz$ of degree $n$ with coefficients in 
holomorphic functions in $\cO_C(U)$. The $\hbar$-deformation changes it
to a $1$-dimensional Schr\"odinger operator on $U$, which patches
together to a 
globally  defined operator $P^\hbar$ on $C$. The process of semi-classical limit is 
equivalent to taking the ``total symbol'' of this differential operator.
After a suitable choice of a local 
coordinate and a conjugation action of the operator
with a locally non-vanishing function, we can write 
\be
\label{nth order}
P=\left(\frac{d}{dz}\right)^n + \sum_{k=2} ^n a_k(z) 
\left(\frac{d}{dz}\right)^{n-k}, \qquad a_k\in \cO_C(U)
\ee
on $U$.
Then the equation  $P\psi = 0$ is equivalent to $\nabla_z \Psi = 0$,
where
\be
\label{introconn}
\nabla_z := \frac{d}{dz} + \begin{bmatrix}
0&a_2&\cdots&a_{n-1}&a_n\\
-1&\\
&-1&&&\vdots\\
&&\ddots\\
&&&-1&0
\end{bmatrix} \quad \text{and}\quad
\Psi
=
\begin{bmatrix}
\psi^{(n-1)}\\
\psi^{(n-2)}\\
\vdots\\
\psi'\\
\psi
\end{bmatrix}.
\ee
Therefore, locally an $n$-th order differential equation $P\psi=0$
is always equivalent to the flatness equation $\nabla_z\Psi=0$ with respect
to a holomorphic connection $\nabla_z$. In Section~\ref{sect:var},
we will translate the condition that $P$ is globally defined on $C$ into
a set of properties of this connection. The requirements include that
$P$ acts on the line bundle $K_C^{-\frac{n-1}{2}}$, and that
the connection $\nabla$ acts on a vector bundle 
that has a full-flag filtration with \textbf{Griffiths transversality}.
Such a connection is known as an \textbf{oper} \cite{BD, BZF}.

The statement we mentioned above is thus equivalent to the assertion that 
\emph{every oper on $C$ corresponds in a one-to-one manner
to  a spectral curve $\Sigma$ covering $C$.}
The $\hbar$-deformation of $\nabla$ is the $\hbar$-connection 
$\nabla^\hbar$ of
Deligne, and the parameter $\hbar$ is identified as an element
$\hbar\in H^1(C,K_C)$ (see \cite{OM5}), which determines the \emph{extension classes}
of line bundles associated with the full-flag filtration.
In \cite{DFKMMN}, 
a globally defined linear differential operator on a curve $C$, or an oper, 
is constructed from an
arbitrary spectral curve of a $G$-Hitchin system on a base curve
$C$, where $G$ is
a simple Lie group of adjoint type. This gives a biholomorphic 
map from the moduli space of spectral curves to the moduli space
of opers. Bijection means that the original spectral curve is
uniquely determined by the corresponding oper. And  the
construction also implies that a higher order
differential operator uniquely identifies a \emph{quantization}
of the spectral curve, i.e., the $\hbar$-family of deformations
of the starting operator, or equivalently, a Deligne's
$\hbar$-connection. 

There is an important difference between  holomorphic 
differential operators $P$ on
a curve $C$ of genus $g(C)>1$, and the examples 
coming from enumeration problems mentioned
above. 
When  $P$, and its corresponding oper $\nabla$,  is  holomorphic, 
any \emph{solution} 
to the equation $P\psi = 0$ is  everywhere holomorphic.  So we do not expect
it to contain any new geometric information of something that goes
beyond the given context,
such as topology of $\Mbar_{g,n}$ as in the examples. 
We expect that when we consider
$P$ with  \emph{irregular singularities},  a whole new story   begins.
This is an active area of research in geometry. We refer to 
\cite{ABF, CFW, CW2018,GMN}.

\begin{quest}
Is there  an analogous  correspondence between differential operators
with irregular singular points and singular spectral curves, both
defined over $C$?
\end{quest}

Even for $C=\bP^1$, if such a correspondence is established, then it
should tell us a lot of stories behind some deep mysteries, 
such as the geometry behind the irrationality of $\zeta(3)$.

\subsection{A quick guide of the contents}
\bite
\item
The story given in Section~\ref{subsect:Catalan}
illustrates the model of the theory: The expansion of a solution 
of a differential equation ($=$ quantum curve) around
its irregular singular point  contains a profound amount of 
geometric information 
not obvious from the given setting. 

\item A story of Hurwitz numbers is presented in Section~\ref{sect:Hurwitz}
with some details.  In this case
the quantum curve corresponding to the spectral curve is a 
difference-differential operator,
or a differential operator of an infinite-order. This is due to the fact that
the spectral curve,  the Lambert curve in this case, is an analytic curve, not an algebraic curve.
So far we do not have any counterpart generalization of the 
theorem of \cite{DFKMMN}
for difference operators.

\item What we mean by a globally defined high order linear differential
operator on a curve $C$ of genus $g(C)>1$  is explained in Section~\ref{subsect:higherorder}.
There and in the following Section~\ref{subsect:oper}, we will encounter the geometric meaning of the parameter
$\hbar$, how it determines the \emph{unique} quantization from 
the starting classical spectral curve, and how the 
\textbf{projective coordinate system} \cite{Gun} appears in the global construction.

\item An analogy of two differential equations 
described in \eqref{PF}, a Picard-Fuchs equation,
and  an Hermite-Weber equation \eqref{HW}, 
appears in a context of Gromov-Witten invariants of a
Fano $3$-fold. This is also deeply and mysteriously related to the 
integer sequence playing a key role in the \emph{irrationality proof} 
of  $\zeta(3)$. 
This open-ended story is presented at the end of these 
lectures in Section~\ref{zeta3}.
\eite

These are the  notes  based on the author's series of 
talks delivered in K\"oln, Hamburg, Osaka, Oxford, Z\"urich, 
Riverside, Kyoto, Hiroshima, Kobe,
Les Diablerets,  Madrid, and Seattle 
 in the last two years or so. They are
designed to tell a story of the 
exploration: ``\emph{In Search of a Hidden Curve}.''

\begin{ack}
The author would like to express his
hearty gratitude to  the organizer, Laura P.\;Schaposnik, of the
\emph{Workshop and School on Complex Lagrangians, Integrable Systems, and Quantization}, which was 
held at the Mathematical Institute of the University of Oxford
 in summer 2023,  for her invitation and kind hospitality, and her continued support and encouragement
 to him over the past decade.
 His thanks are  due to Schaposnik's 
co-organizer and co-editor of the volume  Mengxue Yang
  for her
effort to make the workshop and the volume successful. 
He thanks Zachary Ibarra for many  discussions on the topics of 
$\zeta(3)$, which were very useful to compile 
Section~\ref{zeta3} of these notes.

The author  would also  like to express his  gratitude to Don Zagier for 
his kind invitation to spend a sabbatical term of Fall 2022 at 
Max-Planck-Institut f\"ur Mathematik, Bonn, and to 
Takuro Mochizuki and Masa-Hiko Saito for their generous invitation 
to spend the Spring-Summer term of 2024
at the Research Institute for Mathematical Sciences, Kyoto University,
in part of the \emph{RIMS Research Project 2024: Development in Algebraic Geometry related to Integrable Systems and Mathematical Physics}. 
The final version of these notes were drafted  
during the author's stay in Madrid. He thanks 
Monica Jinwoo Kang, Marina Logares, and Piergiulio Tempesta 
for their kind invitation and hospitality. During the preparation of the present work, the author received financial
support from Max-Planck-Institut f\"ur Mathematik-Bonn,
Kyoto University, and Universidad Complutense de Madrid,
which are gratefully acknowledged. 

Last but not least, the author's 
special thanks are due to Olivia M.~Dumitrescu for her over a decade
long discussions and collaboration with the author that led to numerous joint 
publications, including \cite{DMSS} featured in Section~\ref{subsect:Catalan} and \cite{OM5} in Section~\ref{subsect:oper}.

The research 
of the author has been supported by NSF-FRG grant DMS-2152257.
\end{ack}

\section{Prelude}
\label{sect:prelude}

\subsection{Theme $1$: Mirror symmetry of Catalan numbers}
\label{subsect:Catalan}

Mathematics thrives on mysteries. \emph{Mirror symmetry}
has been a great mystery for a long time,
and has served as a driving force in many areas of
mathematics. Even after three and a half decades
since its conception in physics,
it still produces new challenges to mathematicians.
One of the  starting points of this forever expanding universe of 
research frontier is
the 1991 discovery of Candelas, De La Ossa, Green and Parkes
\cite{CdlOGP}. From this paper's scope, we
learn  that the following four subjects of mathematics,
\begin{itemize}
\item combinatorial counting problems,
\item  algebraic geometry over $\bC$, and more lately over  $\bF_q$
and $p$-adic fields,
\item Picard-Fuchs  differential 
equations, and
\item nonlinear integrable systems
\end{itemize}
are deeply intertwined in a manner beyond the realm of 
classical mathematics, and that an interplay of these different subjects
 leads us to new
insight and further understanding of mathematics. Some of the developments of
mathematics stemming out of mirror symmetry are compactly 
characterized as \emph{quantum} mathematics.

To illustrate how these  items listed above appear and interact together, 
let us begin  with a na\"ive question: 
\begin{quest}
What is the mirror symmetric dual of the Catalan  numbers?
\end{quest}
\noindent
Here our usage of the terminology 
\emph{mirror symmetry} is not conventional.
 At least at this moment, our question  
is not directly interpreted from the point of veiw of the
\emph{homological mirror symmetry} of \cite{K1994}.
Catalan numbers $C_m = \frac{1}{(m+1)} \begin{pmatrix}2m\\m
\end{pmatrix}$, a sequence of positive integers, 
exhibit  solutions to many different combinatorial problems.

The first aim of these lectures is to present a few problems that have been 
solved when a hidden \emph{curve} behind the scenes is identified.
These curves are commonly called \emph{spectral curves}. 
As we see below, as soon as the spectral curve is identified,
it acts as a catalyst to let the subject in question weave the whole story
in front of us. And the important common feature of these curves
is that they are \emph{complex Lagrangians} in a complex symplectic
surface. 

Spectral curves have appeared in many different contexts in the past,
including integrable systems of nonlinear PDEs such as the KdV equations, 
Hitchin's work on Higgs bundles, and analysis of random matrices.
In each situation, there is a concrete definition of what people call a
spectral curve. In these lectures, however, we do not attempt to give a universally 
valid definition of spectral curves, except that we say,
\textbf{a spectral curve is a collection of eigenvalues of 
an operator that takes the shape of a curve}. The fact that we have a spectral
curve for our counting problem suggests that it has a
 hidden connection to integrable systems, algebraic geometry,  random
matrix theory, and beyond.

In \cite{DMSS}, 
we  proposed  that a 
\emph{spectral curve}
\be
\label{xz}
\Sigma = \left\{ (x,z)\;\left|\;  x = z + \frac{1}{z}\right.\right\}
\ee
is the mirror dual to the Catalan numbers.
It is not hard to see why this curve 
has something to do with  Catalan numbers:
The inverse function $z=z(x)$ of the equation $x=z+1/z$  that satisfies 
$\lim_{x\rar \infty} z(x) = 0$ is an unconventional generating function of 
Catalan numbers
\be
\label{z(x)}
z(x) = \sum_{m=0}^\infty C_m \; \frac{1}{x^{2m+1}}.
\ee
This is an absolutely convergent series for $|x|>2$. First, we observe
that this curve immediately
defines a differential equation. 
Since $x$ and $z(x)$ satisfy a polynomial relation 
$z^2-xz + 1=0$, the $x$-derivatives $z'(x)$, $z''(x)$, $z'''(x),\dots$
are all in the extension field $\bC(x,z)$ of degree $2$
over the field  of rational functions $\bC(x)$.
Therefore, $z, z'$, and $z''$ are linearly dependent over 
the polynomial ring $\bC[x]$. The simplest linear relation is
\be
\label{xzdiff}
\left((x^2-4)\;\frac{d^2}{dx^2} + x\; \frac{d}{dx} -1
\right) z(x) = 0.
\ee
This differential equation  is equivalent to
the recursion formula
$$
C_m = \frac{2(2m-1)}{m+1} C_{m-1}, \qquad C_0 = 1,
$$
with respect to the generating function $z(x)$.
The differential equation \eqref{xzdiff} has three regular singular points
at $x = -2, 2, \infty$. (The definition of this terminology is given below in
Definition~\ref{regular and irregular}.)
The analytic continuations of $z(x)$  from the neighborhood 
of $\infty\in\bP^1$ are
$$
z(x)_\pm = \frac{x \pm \sqrt{x^2-4}}{2} = 
\left(\frac{x+\sqrt{x^2-4}}{2}\right)^{\pm 1},
$$
which produce obvious solutions  $x=z(x)_+ + z(x)_-$ 
and $\sqrt{x^2-4}$ to \eqref{xzdiff}.  Thus the monodromy property of 
the differential equation \eqref{xzdiff} is very simple at each singular point,
which is the consequence of the regular singularity at these points.

Even though this example is trivial in many sense, the point here
is that \eqref{xzdiff} is a \emph{Picard-Fuchs} equation
associated with the projection $\pi: \Sigma\lrar \bP^1$ defined by 
the algebraic equation $x = z + 1/z$. This is a trivial 
\emph{Landau-Ginzburg} model. A coordinate transformation $x = 4t-2$ brings this differential equation to 
one of the Euler-Gau\ss\ 
hypergeometric differential equations,
\be
\label{hyper}
\left(t(1-t) \frac{d^2}{dt^2} + (-\half +t) \frac{d}{dt} -1
\right) y(t)=0
\ee
with regular singular points at $0, 1, \infty$. 
The cohomology $H^0(\pi^{-1}(x),\bZ)$
defines a Gauss-Manin connection in a local system over $\bP^1$.

Another differential equation that is determined by 
the spectral curve \eqref{xz} is 
a Schr\"odinger equation (cf.\ the \emph{quantum curve} of \cite{OM1, OM2})
\be
\label{qxz}
\left(\hbar^2 \frac{d^2}{dx^2} + \hbar x\frac{d}{dx}+  1\right)
\Psi(x,\hbar)
=0.
\ee
This equation has only one irregular singular point at 
$\infty$, and no regular singular points. Hence a solution is
an entire function on $\bC$ with an essential singularity at $\infty$.

Let us briefly review the mechanism of semi-classical limit here. The WKB method 
allows us to find an asymptotic solution of \eqref{qxz} 
in terms of the exponential of a  \emph{Laurent series} expansion in $\hbar$.
We impose that 
$$
\psi(x,\hbar) = \exp\left(\frac{s_0(x)}{\hbar}\right) 
\exp\left(\sum_{m=1}^\infty \hbar^{m-1}s_m(x)\right)
$$
is a solution to \eqref{qxz} and derive differential equations for each 
$s_m(x)$. The idea here is that as $\hbar\rar 0$, the function 
$s_0(x)$ has a dominant importance, which should recover the
classical behavior of the quantized equation \eqref{qxz}. 
Since the above expression has no meaning as a series in $\hbar$
because each term $\hbar^n$ is an infinite sum. So we rewrite
the equation as
$$
\left[\exp\left(-\frac{s_0(x)}{\hbar}\right) 
\left(\hbar^2 \frac{d^2}{dx^2} + \hbar x\frac{d}{dx}+  1\right)
\exp\left(\frac{s_0(x)}{\hbar}\right) \right] 
\exp\left(\sum_{m=1}^\infty \hbar^{m-1}s_m(x)\right)=0.
$$
Then the operator acting on $\exp\left(\sum_{m=1}^\infty \hbar^{m-1}s_m(x)\right)$
is 
$$
\left(\hbar^2 \frac{d^2}{dx^2} + \hbar x\frac{d}{dx}+  1\right)
+2\hbar s_0'(x) \frac{d}{dx}+\hbar s_0''(x)+ \big(s_0'(x)\big)^2
+xs_0(x)' ,
$$ 
hence the equation produces only non-negative powers of $\hbar$.
The semi-classical limit is the limit of $\hbar\rar 0$ at this stage. 
Clearly, the operator converges to $\big(s_0'(x)\big)^2
+xs_0(x)' + 1$, which is multiplied to $e^{s_1(x)}$. 
Therefore, the equation becomes an algebraic equation
$\big(s_0'(x)\big)^2
+xs_0(x)' + 1=0$, which is the same as $z^2-xz+1=0$ by
substituting $s_0(x)'=-z$. 

What we find is that the semi-classical limit
in this context is equivalent to replacing 
$
\begin{cases}
\hbar \frac{d}{dx} \lrar -z,\\
x\lrar x
\end{cases}
$
in \eqref{qxz}. 
Hence the spectral curve \eqref{xz} is recovered from the quantum curve
\eqref{qxz}. In other words, the quantum curve \eqref{qxz} is 
the result of \emph{Weyl quantization} of the spectral curve
$z^2-xz+1=0$.

Since we are already using the terminology of regular singular and
irregular singular points of
differential equations, let us  explain what they are. 

\begin{Def}
\label{regular and irregular} 
Let
\begin{equation}
\label{second}
\left(\frac{d^2}{dx^2}+a_1(x)\frac{d}{dx}+a_2(x)
\right)\Psi(x) = 0
\end{equation}
be a second order differential equation
defined around a neighborhood of $x=0$ on a
small disc $|x|< \epsilon$ with meromorphic 
coefficients $a_1(x)$ and $a_2(x)$ with poles  
at $x=0$. Denote by $k$ (reps.\ $\ell$) the 
order of the pole of 
$a_1(x)$ (resp.\ $a_2(x)$) at $x=0$. 
If $k \le 1$  and $\ell\le 2$, then \eqref{second}
has a \textbf{regular singular point} at $x=0$.
Otherwise, consider the \emph{Newton polygon}
 of the order of poles of the coefficients of
 \eqref{second}. It is the upper part of
 the convex hull of three
 points $(0,0), (1, k), (2,\ell)$. As a convention,
 if $a_j(x)$ is identically $0$, then
 we assign $-\infty$ as its pole order. Let $(1,r)$
 be the intersection point of
 the Newton polygon and the line $x=1$,
$
r= \begin{cases}
 k \qquad 2k\ge \ell,\\
\frac{\ell}{2} \qquad 2k\le \ell.
 \end{cases}
$
 If $r>1$, \eqref{second} 
 has an \textbf{irregular singular point of class}
 $r-1$. 
 \end{Def}
\begin{rem}
Jacob \cite{CJ} recently discovered a $2$-parameter family of \eqref{xz}
associated with different combinatorial objects. How the story of this section
changes with this new family is a subject of future investigation. 
\end{rem}

The significance of \eqref{qxz} is in the expression of its solution,
asymptotically expanded
at its essential singularity:
\begin{equation}
\label{CatalanPsi}
\Psi(x,\hbar)=
\exp\left(
\sum_{g\ge 0, n>0}\frac{1}{n!}\hbar^{2g-2+n}
F_{g,n}(x,\dots,x)
\right)\;,
\end{equation}
where $F_{g,n}(x,\dots,x)$ is the \emph{principal specialization}
of the symmetric functions
\begin{align}
F_{0,1} (x)&= -\half z(x)^2 + \log z(x),
\\
F_{0,2} (x_1,x_2)&= -\log\big(1-z(x_1)z(x_2)\big),
\\
\label{Fgn Catalan}
F_{g,n}(x_1,\dots,x_n)
&=
\sum_{\mu_1,\dots,\mu_n>0}
\frac{C_{g,n}(\mu_1,\dots,\mu_n)}
{\mu_1\cdots\mu_n}
\prod_{i=1}^n x_i^{-\mu_i}, \quad 2g-2+n>0.
\end{align}

\begin{figure}[htb]
\includegraphics[width=2.7in]{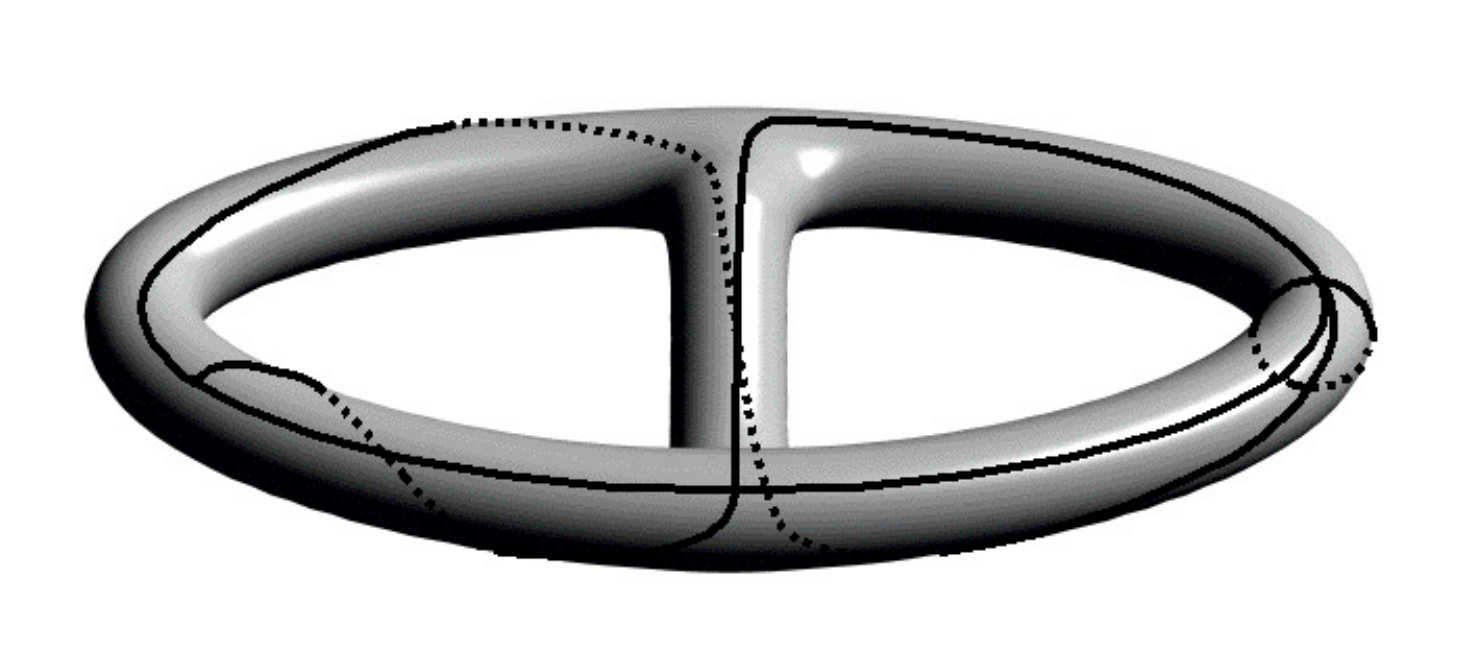}\hskip0.3in
\includegraphics[width=2.7in]{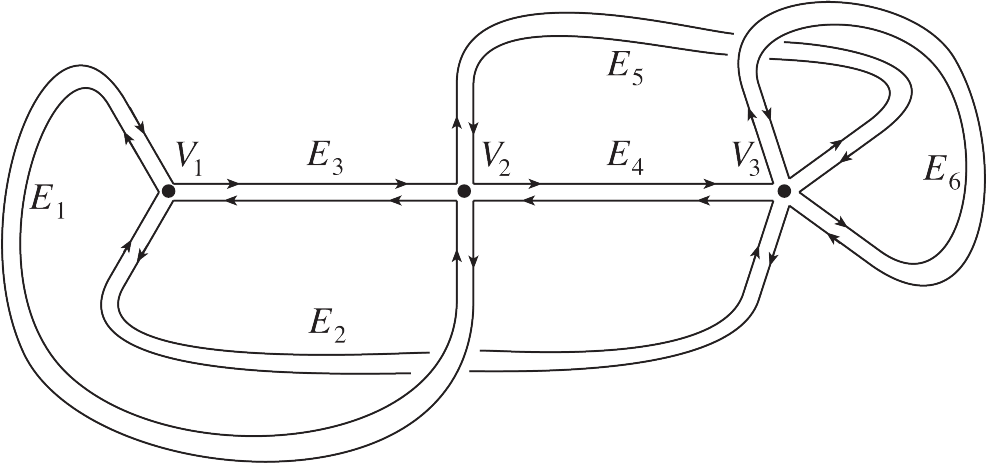}
\caption{$1$-Skeleton of a Cell Decomposition and a Cell Graph}
\end{figure}

Here comes the relation to enumeration. 
The coefficients of the expansion $C_{g,n}(\mu_1,\dots,\mu_n)$ 
of \eqref{Fgn Catalan} are
the generalized Catalan numbers of type $(g,n)$ 
that count the numbers
of \textbf{cell graphs} of genus $g$ and
$n$ labeled vertices of degrees
$(\mu_1,\dots,\mu_n)$
\cite{ OM3, OM4, DMSS, MP1998, MP2012}. A cell graph is the $1$-skeleton of
a cell decomposition of a connected oriented surface of
genus $g$ with $n$ labeled $0$-cells. Its \emph{dual graph} is
commonly called a \emph{ribbon graph} on which
faces are labeled. To avoid the difficulties of 
counting automorphisms of graphs, we impose that
no cyclic rotations of incident edges at any vertex are allowed,
as in Figure~\ref{01graph}. This explains the  denominator $\mu_1\cdots\mu_n$
of \eqref{Fgn Catalan} (see Remark~\ref{categorical}).
 For $(g,n) = (0,1)$, 
the unique vertex has to have an  even degree, and
$C_{0,1}(2m) = C_m$ is the $m$-th Catalan number.
The surprising properties of the functions $F_{g,n}$
are the following (see for example, \cite[Theorem 2.7]{OM4}).

\begin{thm}[\cite{CMS, OM4, OM5,DMSS, MP2012}]
\label{thmCatalan}
For the case of $2g=2+n>0$,
substitute each $x_i$ in \eqref{Fgn Catalan}  with
$
\begin{cases}
x_i = z_i + \frac{1}{z_i},\\
z_i= \frac{t_i + 1}{t_i-1},
\end{cases}
$
and write the result as $F_{g,n}(t_1,\dots,t_n)$ incorporating these substitutions.
Then for every $(g,n)$ in the range of $2g-2+n>0$, the following holds:
\begin{itemize}
\item $F_{g,n}(t_1,\dots,t_n)$ is a Laurent polynomial in 
the $t$-variables of  degree $6g-6+3n$.
\item $F_{g,n}(t_1,\dots,t_n) = F_{g,n}(1/t_1,\dots,1/t_n)$.
\item $F_{g,n}(1,\dots,1) = (-1)^n \rchi(\cM_{g,n})$.
\item The restriction to the highest degree terms of $F_{g,n}(t_1,\dots,t_n)$
is a homogeneous polynomial
\be
\label{WK}
 F_{g,n}^{\text{top}}(t_1,\dots,t_n)
=\frac{(-1)^n}{2^{2g-2+n}}
\sum_{\substack{d_1+\cdots+d_n\\
=3g-3+n}}\la\tau_{d_1}\cdots \tau_{d_n}
\ra_{g,n}
\prod_{i=1}^n (2d_i-1)!!
\left(\frac{t_i}{2}\right)^{2d_i+1}.
\ee
\end{itemize}
\end{thm}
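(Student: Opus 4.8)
The plan is to establish all four items at once by finding a tractable closed form for the generating functions $F_{g,n}$ after the substitution, and the natural engine for this is the combinatorial recursion satisfied by the generalized Catalan numbers $C_{g,n}(\mu_1,\dots,\mu_n)$ together with a Laplace transform argument. First I would write down the edge-contraction/edge-removal recursion for cell graphs: removing the edge incident to the first vertex either connects two distinct vertices (a ``join'' term, lowering $n$ by one), splits a handle (a genus-lowering term), or merges the first vertex with itself (a ``cut'' term, either disconnecting the surface or lowering the genus). This is the analogue of the cut-and-join/topological-recursion relation, and it is the combinatorial input of \cite{DMSS, OM4}. Taking the discrete Laplace transform $\sum_{\mu_i>0}(\prod \mu_i)^{-1}\prod x_i^{-\mu_i}$ of this recursion turns it into a differential-recursion relation for the $F_{g,n}(x_1,\dots,x_n)$, in which the operator $(x^2-4)d/dx$ (the same operator from \eqref{xzdiff}) and the residue/principal-part extraction at $x=\pm 2$ appear. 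The change of variable $x=z+1/z$, $z=(t+1)/(t-1)$ is engineered precisely so that $x=\pm 2$ becomes $z=\pm 1$, i.e. $t=\infty$ and $t=0$, and so that $(x^2-4)d/dx$ becomes a clean rational vector field in $t$; this is why the answer is rational/polynomial in $t$ rather than in $x$.

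With the differential-recursion in the $t$-variable in hand, the first and second bullets follow by an induction on $2g-2+n$ (with base cases $F_{0,3}$ and $F_{1,1}$ computed directly): one checks that the right-hand side of the recursion, applied to Laurent polynomials of the asserted degrees, produces a Laurent polynomial, and one reads off that the degree goes up by exactly the Euler-characteristic increment, giving $\deg = 6g-6+3n$. The symmetry $F_{g,n}(t_1,\dots,t_n)=F_{g,n}(1/t_1,\dots,1/t_n)$ is the statement that the answer is invariant under $t_i\mapsto 1/t_i$, equivalently $z_i\mapsto -z_i$, equivalently $x_i\mapsto -x_i$; since $C_{g,n}(\mu_1,\dots,\mu_n)$ is supported on $\sum\mu_i\equiv 0\pmod 2$ in the stable range, $F_{g,n}$ is even in the $x_i$'s jointly, and this evenness is preserved by the recursion, so it suffices to check it on the base cases and propagate it. For the third bullet, $F_{g,n}(1,\dots,1)=(-1)^n\rchi(\cM_{g,n})$, I would evaluate the Laurent polynomials at $t_i=1$ and compare with the Harer–Zagier/Penner computation: the recursion at $t_i=1$ degenerates to the recursion (or the Euler-characteristic identity) satisfied by $\rchi(\cM_{g,n})$, which is the content of \cite{MP2012}; alternatively one identifies $F_{g,n}$ with the orbifold Euler characteristic of the combinatorial (ribbon-graph) model of $\cM_{g,n}$ and specializes.

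For the fourth bullet — the Witten–Kontsevich limit — the idea is to extract the top-degree part $F_{g,n}^{\mathrm{top}}$ from the differential-recursion: the leading-order terms of the $t$-recursion, keeping only the highest powers, reduce precisely to the DVV/Virasoro recursion for the intersection numbers $\la\tau_{d_1}\cdots\tau_{d_n}\ra_{g,n}$. Concretely, near $t_i=0$ (or after rescaling $t_i$ to a ``large-radius'' variable) the Laplace-transformed recursion becomes, to leading order, the Eynard–Orantin topological recursion on the rational curve $x=z+1/z$ with only the behavior near the ramification points $z=\pm 1$ surviving; that recursion is known to compute $\psi$-class intersection numbers with the stated normalization $(2d_i-1)!!\,(t_i/2)^{2d_i+1}$ and prefactor $(-1)^n 2^{-(2g-2+n)}$. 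So the step is: (i) identify the top-degree truncation of the recursion, (ii) match it term-by-term with the Virasoro constraints / DVV recursion, (iii) check the two base cases to pin down constants. I expect the main obstacle to be exactly this last bookkeeping: carefully tracking the combinatorial factors — the $(2d_i-1)!!$, the powers of $2$, and the signs — through the Laplace transform and the coordinate change $x=z+1/z=((t+1)/(t-1))+((t-1)/(t+1))$, so that the leading-order recursion matches DVV on the nose rather than up to an unidentified constant. Everything else is a structured induction on $2g-2+n$; the delicate part is the normalization in \eqref{WK}.
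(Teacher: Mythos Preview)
The paper does not actually prove this theorem; it is stated with citations to \cite{CMS, OM4, OM5, DMSS, MP2012} and the reader is pointed to \cite[Theorem~2.7]{OM4} for details, so there is no in-paper proof to compare against. Your outline is an accurate sketch of the strategy used in those references, and it parallels very closely the argument the present paper \emph{does} carry out in full detail for simple Hurwitz numbers in Section~\ref{sect:Hurwitz}: start from a combinatorial edge-removal recursion, take the Laplace transform to obtain a differential recursion on the spectral curve, establish polynomiality by induction on $2g-2+n$, and extract the Witten--Kontsevich relation from the top-degree terms. So your proposal is correct in spirit and in architecture.

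One small sharpening on the second bullet: your observation that $\sum_i\mu_i=2E$ is always even gives invariance under the \emph{simultaneous} substitution $t_i\mapsto 1/t_i$ for all $i$, which is precisely what the theorem asserts; just be explicit that this is a joint symmetry rather than a symmetry in each variable separately, and that invoking it after polynomiality (so that analytic continuation from the series domain is automatic) is what makes the argument clean. The difficulty you flag --- tracking the factors $(2d_i-1)!!$, the powers of $2$, and the sign $(-1)^n$ through the coordinate change to match DVV exactly --- is indeed where the work lies in the cited proofs.
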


Here, $\cM_{g,n}$ is the moduli space of smooth $n$-pointed
algebraic curves of genus $g$, $\Mbar_{g,n}$ its compactification, 
i.e., 
the Deligne-Mumford stack of
stable curves of finite type $(g,n)$, and
$$
\la\tau_{d_1}\cdots \tau_{d_n}\ra_{g,n} 
=\int_{\Mbar_{g,n}} c_1(\bL_1)^{d_1}\cdots c_1(\bL_n)^{d_n}
$$
is the intersection number of the first Chern classes of the tautological
line bundles $\bL_i\lrar \Mbar_{g,n}$   that is determined by the 
$i$-th marked point on stable curves (see \eqref{tautologicalL} below).

The contrast between the Picard-Fuchs equation \eqref{xzdiff} and
the irregular singular equation \eqref{qxz} is the \emph{quantization}
effect: the former only determines the original Catalan numbers, while the latter
has the information of all $(g,n)$-Catalan numbers $C_{g,n}(\mu_1,\dots,\mu_n)$. This effect is often 
observed in the \emph{Borel-Laplace} transform of 
differential equations. 
The passage from the spectral curve $\Sigma$ of \eqref{xz} to the quantum curve
\eqref{qxz} is the inverse operation of taking the {semi-classical limit}. 
As noted 
in \cite{OM2}, the conic $\Sigma$, a nonsingular plane
curve in $\bP^2$,  should be placed in the cotangent bundle $T^*\bP^1$
to consider its quantum curve.
The irregular singularity at $\infty\in\bP^1$ of \eqref{qxz}
comes from the fact that the embedding of $\Sigma$ into the compactified
cotangent bundle  $\overline{T^*\bP^1}$ of $\bP^1$, 
or the Hirzebruch surface $\bF^2$, has a singularity
at $\infty\in\bP^1$.
\begin{equation}
\label{spectral}
\xymatrix{
\Sigma \ar[dr]_{\pi}\ar[r]^{i} 
&{\overline{T^*\bP^1}}\ar[d]^{\pi}\ar[r]^\isom&\bF^2
\\
&\bP^1	}
\end{equation} 
While the Picard-Fuchs equation \eqref{xzdiff} is associated with the
the conic $\Sigma\subset \bP^2$ and the
ramified double-sheeted covering  $\pi: \Sigma \lrar \bP^1$,
 the irregular singular equation \eqref{qxz}
represents the symplectic geometry of $T^*\bP^1$ and the singularity of
$\Sigma$ in the compactified fiber of $\pi: \overline{T^*\bP^1}=\bF^2 \lrar \bP^1$
at $\infty\in\bP^1$. Precisely this geometric difference is reflected in the
\emph{quantization} process, from the $(0,1)$ invariants
in $z(x)$ of \eqref{xz} to a function $\Psi(x,\hbar)$ 
of \eqref{CatalanPsi} that has the information of invariants for all 
values of $(g,n)$.

Although we do not touch in these lectures, there is yet another coordinate
change from $(t_1,t_2,t_3,\dots)$ appearing in the Catalan case to the
time evolution parameters $(t_1,t_2,t_3,\dots)$  in 
the Lax equations \eqref{KP} defined below. 
Our use of the  same notations is due to the (confusing) conventions 
of different previous publications.  In reality,
these are not the same variables. After the choice of  the
right  coordinate transformation, 
\begin{equation}
\label{Catalantau}
\Psi(\mathbf{t},\hbar)=
\exp\left(
\sum_{g\ge 0, n>0}\frac{1}{n!}\hbar^{2g-2+n}
F_{g,n}(t_1,t_2,t_3,\dots)
\right)
\end{equation}
becomes a $\tau$-function for the KP equations. When we restrict
$F_{g,n}$ to the highest degree terms and apply the same 
coordinate change, this $\tau$-function then becomes that of the
KdV equations of Witten \cite{W1991} and Kontsevich \cite{K1992}.

\subsection{Theme $2$: When is a space a moduli space?}
\label{theme2}
A space, or a \textbf{manifold} is a democratic space. 
Every point of a manifold is 
treated equal. No matter where you go on a manifold,  no matter 
which point you choose, its neighborhood is \emph{isomorphic}
to that of any other point. There are no special points, or \emph{singular} 
points. No point dominates other points, and no point shows its individuality. 
Everywhere is the same and things work smoothly on a manifold.

A \textbf{moduli space} is different. On this space, every point has 
its \emph{unique} name. Each point is distinguished by its individuality. 
No points on a moduli space are the same. A neighborhood of each point is
different from place to place. There may  also be \emph{singularities}
on a moduli space. Each point has its own characteristic importance, yet it 
is a member of a  family.
Since every point is different, the moduli space itself
presents its miracle: it can exist, as a harmonious \emph{space}! 

We are not asking when a moduli space is a manifold. Our purpose is 
different. We are wondering when a given space 
is actually a moduli space. 

A simplest example we can consider is a vector space $\bC^n$. 
Can $\bC^n$ be a moduli space? Hitchin \cite{H1} told us: \emph{Yes}!
But for some special values of the dimension $n$. He starts with a smooth 
algebraic curve $C$ of genus $g>1$, and defines a vector space
\be
\label{HitchinBase}
B: = \bigoplus_{i=2}^r H^0\big(C,K_C^{\tensor i}\big)
\isom \bC^{(r^2-1)(g-1)}
\ee
consisting of \emph{multi differentials} on $C$. Here, $K_C=\Omega_C^1$ is
the canonical sheaf of $C$ consisting of holomorphic $1$-forms. This vector space is the 
\emph{moduli space of spectral curves} for $SL_r(\bC)$-Higgs bundles.
This example also explains the title of these lecture notes: 
\emph{In search of a hidden curve}. The vector space $B$ becomes a
moduli space as soon as we find a curve $C$, and the dimension $n$ 
factors as $n=(r^2-1)(g-1)$ for some $r>1$, which is associated with 
the group $SL_r(\bC)$.

As another simple example, let us start, again,  with the complex vector space 
$\bC^n$ of dimension $n>0$, but this time $n$ is arbitrary,
and we choose a set of 
$2n$ vectors in it linearly independent over $\bR$.  
These vectors generate a $\bZ$-submodule $\Gam\subset \bC^n$
of rank $2n$, a full-rank $\bZ$-\emph{lattice}. The quotient space
$T:=\bC^n/\Gam$ is a compact complex manifold whose underlying
space is the real $2n$-dimensional torus $(S^1)^{2n}$.
Now the question: When is this quotient space a \emph{moduli space}?

We give a linear coordinate system on $\bC^n$ and represent
the $2n$ generators of $\Gam$ by a set of $2n$ column vectors
$w_1,\dots, w_{2n}$ of size $n$. They form a complex matrix $\Pi$
of size $n\times 2n$. $GL_n(\bC)$ acts on $\Pi$ from the left,
representing the coordinate change. Since the column vectors of the
matrix $\Pi$ generate the $\bZ$-module $\Gam$, 
$SL_{2n}(\bZ)$ acts on $\Pi$ from the right, representing 
change of generators of the same lattice $\Gam$. 
Since $\{w_1,\dots, w_{2n}\}$ is linearly independent over $\bR$, 
there is a $\bC$-linearly independent subset consisting of
$n$ vectors. Thus we can use a left multiplication of $GL_n(\bC)$ and 
a right multiplication of $SL_{2n}(\bZ)$ on $\Pi$ to bring
it to the special shape  $[I\,|\,\Omega]$. The geometry of the 
quotient space $\bC^n/\Gam$ is encoded in this $n\times n$ matrix
$\Omega$, which we call the \emph{period matrix} of the
torus $T=\bC^n/\Gam$.

A classical result shows that if the period matrix satisfies the
\emph{Riemann period condition} $^t \Omega = \Omega$ and 
$Im(\Omega)>0$, i.e., $\Omega$ is symmetric and its imaginary part is
positive definite, then the torus admits a holomorphic embedding
$\bC^n/\Gam \subset \bP^N$ into a complex projective space
of a large dimension $N$. When it happens,  the quotient
$\bC^n/\Gam$ can be defined by a set of homogeneous polynomial equations 
in $N+1$ variables,  and it becomes an \emph{Abelian variety}.
The choice of the shape $\Pi=[I\,|\,\Omega]$ brings to 
the quotient a \emph{principal
polarization}. For those complex tori with periods satisfying the Riemann
period condition, let us use the notation $\mathbf{A} = \bC^n/\Gam$.

The space of all period matrices satisfying the Riemann period condition
is therefore a moduli space, known as the moduli space of Abelian varieties. 
But our question is at a different level: we are still asking, when is an Abelian 
variety $\mathbf{A}$ a moduli space by itself?

An immediate answer comes, again, 
from the geometry of smooth algebraic curves
over $\bC$. On such a curve $C$ of genus $g>0$, we have $g$ linearly
independent holomorphic $1$-forms. Thus
$H^0(C,K_C) = \bC ^g$, which is the  space of holomorphic $1$-forms. The first 
homology group of $C$ is $H_1(C,\bZ) = Z^{2g}$. Let us choose a $\bC$-basis
$\{\omega_1,\dots,\omega_g\}$ for $H^0(C,K_C)$ and homology 
generators $\{\gam_1,\dots,\gam_{2g}\}$. We arrange it so that the
homology basis satisfies the \emph{symplectic} intersection property
\be
\label{homology}
\la \gam_i, \gam_j\ra = \begin{cases}
1\qquad j=i+g,\;\;\; i=1, \dots, g\\
0 \qquad \text{otherwise}.
\end{cases}
\ee
Now define a complex $g\times 2g$ matrix 
$$
\Pi = \begin{bmatrix}
\oint_{\gam_1}\omega_1 &\cdots& \oint_{\gam_{2g}}\omega_1\\
\vdots &\ddots&\vdots\\ 
\oint_{\gam_1}\omega_g &\cdots& \oint_{\gam_{2g}}\omega_g
\end{bmatrix}.
$$
Using the actions of $GL_g(\bC)$ from the left and $Sp_{2g}(\bZ)$
from the right, we can rearrange $\Pi$ in the shape $\Pi = [I\,|\,\Omega]$
again. Riemann discovered that $\Omega$ satisfies 
$^t \Omega = \Omega$ and 
$Im(\Omega)>0$. We can thus construct an Abelian variety
$$
\Jac(C) := H^0(C, K_C)\big/H_1(C,\bZ),
$$
for which $\Omega$ is the period matrix.

Therefore, a complex torus $T=\bC^g/\Gam$ is a moduli space
if its period matrix $\Omega$ comes from the integration \emph{periods}
of holomorphic $1$-forms on a smooth algebraic curve $C$
along its homology basis, because
$$
\Jac(C) \isom H^1(C,\cO_C)\big/H^1(C,\bZ) =: \Pic^0(C)
$$ 
is the moduli space
of holomorphic line bundles on $C$ of degree $0$.
For each fixed $g>1$, there are $3g-3$ dimensional family of Jacobian 
varieties, while Abelian varieties form a family of $g(g+1)/2$ dimensions.
Therefore, Jacobians are very special among Abelian varieties.

So one can ask a question, which Riemann himself did: \emph{When is 
a principally polarized Abelian variety a Jacobian variety}? 
And this question leads us  again to:  \emph{In search of a hidden curve}.

\subsection{Theme $3$: Finding a curve from a period matrix}
The holomorphic embedding of the Abelian variety $\mathbf{A} = \bC^g/\Gam$
into a projective space is constructed by the \emph{Riemann theta function}
\be
\label{theta}
\vartheta(z,\Omega) = \sum_{n\in \bZ^n} \exp
\left(2\pi i \la n,z\ra + \pi i \la n,\Omega n\ra\right), \qquad z\in \bC^n,
\ee
where $\la\;\;,\;\;\ra$ is the standard symmetric form on $\bC^n$.
The Riemann period condition makes
the above infinite series absolutely convergent on $\bC^n$ and satisfy
quasi-periodic conditions. 

Over a four decade-old work  \cite{M1984}
shows the following:

\begin{thm}[\cite{M1984}]
A principally polarized
Abelian variety of period $\Omega$ is a Jacobian variety if and only if
the Riemann theta function \eqref{theta} satisfies the 
nonlinear completely integrable system of Kadomtsev-Petviashvili (KP) equations.
\end{thm}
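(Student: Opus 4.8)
The plan is to prove both directions by connecting the geometry of Jacobians to the KP hierarchy through the Japanese school's description of the KP $\tau$-function and its relation to the Krichever map, while providing the hard ``only if $\Rightarrow$'' direction with a Riemann--Roch/vanishing-theorem argument. First I would recall the Krichever construction: given a smooth algebraic curve $C$ of genus $g$, a point $p\in C$, a local coordinate at $p$, and a generic line bundle $L$ of degree $g-1$ with $H^0(C,L)=H^1(C,L)=0$, one produces a point of the Sato Grassmannian, and the associated $\tau$-function is, up to an exponential of a quadratic form in the times, equal to $\vartheta(\mathbf{U}_1 t_1 + \mathbf{U}_2 t_2 + \cdots + \mathbf{c},\Omega)$ for suitable constant vectors $\mathbf{U}_i\in\bC^g$ determined by expansions of the normalized differentials of the second kind at $p$, and a constant $\mathbf{c}$. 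Since every point of the Sato Grassmannian yields a solution of the KP hierarchy, this immediately gives the ``if $C$ exists, then $\vartheta$ solves KP'' implication — this is the easy direction, and it is essentially Krichever's theorem, so I would state it with a reference and only sketch the verification that the Hirota bilinear form holds.

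For the converse — the genuinely hard direction, which is the content of \cite{M1984} and resolves Novikov's conjecture — the strategy is to assume $\vartheta(z,\Omega)$ satisfies the full KP hierarchy (in Hirota bilinear form, for \emph{some} choice of direction vectors giving the time-flows) and to reconstruct the curve. The key steps: (1) from the KP flows, extract a commutative ring of ordinary differential operators in the first time variable, using the classical Burchnall--Chaundy--Krichever correspondence between commutative rings of ODOs and pointed curves with a line bundle; (2) show that the $\tau$-function built from $\vartheta$ forces this ring to be ``maximal'' of rank one, so that its $\Spec$ is a genuine integral projective curve $C$ of arithmetic genus $g$; (3) identify $\bC^g/\Gamma$ with a compactified Jacobian of $C$ via the eigenvalue map of the commuting operators (the ``spectral curve is the collection of eigenvalues'' philosophy emphasized in the introduction), and (4) use the principal polarization hypothesis together with the irreducibility of the theta divisor — which is where the integrability is actually consumed — to conclude $C$ is smooth and $\bC^g/\Gamma = \Jac(C)$. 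Concretely, one shows the theta function satisfying the bilinear KP equations forces $\dim(\mathrm{sing}\,\Theta)=g-4$, Andreotti--Mayer's criterion, but in fact the stronger integrable-hierarchy input pins down the curve uniquely; Shiota's later work and Mulase's argument both run through the formal-algebraic reconstruction of the ring of commuting operators from the vertex-operator/$\tau$-function data.

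The main obstacle I anticipate is step (2)–(3): showing that the ring of commuting differential operators manufactured from a $\vartheta$-series that a priori only solves KP is actually \emph{large enough} — i.e., that its spectrum is a curve and not something degenerate like a point or a non-reduced scheme — and that this curve has exactly the right genus. This requires careful control of the growth/rationality of the coefficients of the formal pseudodifferential ``dressing'' operator, and it is precisely here that the quasi-periodicity and the Riemann relations for $\Omega$ must be fed back in; the cleanest route is Mulase's: work in the category of $\bC((x))$-valued points, use the cohomology-space description $H^1(C,\cO_C)/H^1(C,\bZ)$ of the Jacobian, and show the KP-flow orbit of the given point in the infinite Grassmannian is finite-dimensional exactly when a curve is present, finite-dimensionality being equivalent to the stabilizer ring being the affine ring of a curve. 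A secondary subtlety is bookkeeping the constant shift $\mathbf{c}$ and the exponential quadratic prefactor relating $\tau$ to $\vartheta$, so that the Hirota equations for $\tau$ translate into honest PDEs for $\vartheta$ — routine but error-prone, so I would set up the dictionary once, carefully, and then quote it.
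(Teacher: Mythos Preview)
Your proposal captures the paper's core mechanism, but it is tangled up with a second, distinct proof strategy, and this muddies the logic.

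The paper's argument runs as follows. Start from Schur's 1905 theorem: if $[P,L]=[Q,L]=0$ for a pseudodifferential operator $L$, then $[P,Q]=0$ automatically. Now, if the theta function solves the \emph{full} KP hierarchy, the associated Lax operator $L$ lives on a $g$-dimensional abelian variety, so it can only deform along a $g$-dimensional subspace of the time directions. For every remaining direction the Lax equation produces a \emph{differential} operator $B_n$ (or a linear combination thereof) commuting with $L$. Schur then gives a commutative algebra $A_L$ generated by all of these; the Euclidean algorithm in the ring of differential operators forces transcendence degree $1$ over $\bC$; and one sets $C=\Proj(\gr A_L)$. This is the spectral curve, of arithmetic genus $g$, and the KP flows become linear on $H^1(C,\cO_C)$. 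Your final paragraph correctly identifies this as ``Mulase's route'' via finite-dimensionality of the orbit, so you do have the right idea in hand.

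Where your proposal drifts: step (4) --- invoking irreducibility of the theta divisor and the Andreotti--Mayer criterion $\dim(\mathrm{sing}\,\Theta)=g-4$ --- belongs to Shiota's later proof of the Novikov conjecture, which extracts everything from the \emph{single} KP equation rather than the full hierarchy. The paper's argument does not use Andreotti--Mayer at all; the full hierarchy is precisely what makes the commutant $A_L$ large enough without any analytic input about the theta divisor. Mixing the two strategies makes your sketch look like it needs more than it does. Also, you cite Burchnall--Chaundy--Krichever where the paper reaches for Schur; these are cousins, but Schur's theorem is the sharper tool here since it applies directly to pseudodifferential $L$ and immediately delivers commutativity of $A_L$ without first having to exhibit two commuting \emph{differential} operators.

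In short: keep your last paragraph, drop step (4), and replace the Burchnall--Chaundy invocation with Schur's theorem plus the Euclidean-algorithm argument for transcendence degree $1$. That is the paper's proof.
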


The nonlinear integrable system of KP equations is compactly  formulated 
using the formalism of Peter Lax as follows:
\be
\label{KP}
\frac{\partial L}{\partial t_n} = [B_n, L], \qquad B_n = (L^n)_+,\quad n=1, 2, 3, 
\dots,
\ee
where the \emph{Lax operator}
\be
\label{Lax}
L=\partial + \sum_{i=1}^\infty u_{i+1}(x; \mathbf{t})\partial^{-i}, \qquad 
\partial = \frac{\partial}{\partial x}, \quad \mathbf{t} = (t_1, t_2, t_3,\dots)
\ee
is a normalized (i.e., there is no constant term in $L$) 
first order pseudodifferential operator in $x$ 
depending on  an 
infinitely many parameters $(t_1, t_2, t_3,\dots)$. The notation $(L^n)_+$ indicates the
differential operator part of $L^n$, i.e., suppressing all 
negative powers of $\partial$. If we write $u = u_2, y = t_2$ and $t = t_3$, 
then the first equation from the system \eqref{KP} is of the form
$$
3u_{yy} = \big(4 u_t -u_{xxx}-12u u_x\big)_x,
$$
where the subscript indicates partial differentiation. This equation
is discovered by Kadomtsev and Petviashlivi in plasma physics. In particular,
a solution to the KdV equation
\be
\label{KdV}
u_t = \frac{1}{4} u_{xxx}+ 3u u_x
\ee
which does not depend on $y$ gives a solution to the KP equation.
A relation between the KdV equation \eqref{KdV} and 
algebraic curves is easily seen by considering a \emph{traveling}
wave solution with speed $-c$ constructed from the Weierstra\ss \;
elliptic function $\wp(z)$: 
\be
\label{p-solution}
u(x,t) = -\wp(x+ct) + \frac{1}{3} c.
\ee

The starting point of the work \cite{M1984} is 
Issai Schur's 1905 theorem which says  that if two differential operators
$P$ and $Q$ in $x$ satisfy the commutativity relation 
$[P,L]=[Q,L]=0$ with a
given pseudodifferential operator $L$, then $P$ and $Q$
automatically commute: $[P,Q]=0$. Now, suppose 
a solution 
$L$ of the KP system depends only on finitely many time variables so that
$L$ deforms  only along a $g$-dimensional direction of the
parameter space $\mathbf{t}\in \bC^\infty$. Then for all other directions,
the KP system \eqref{KP} produces infinitely many differential operators $P$, 
which are linear combinations of $B_n$'s, that commute with $L$.
Denote by $A_L$ the  algebra generated by all these 
$P$'s over $\bC$. Schur's argument  shows that $A_L$ is a commutative 
ring, and by Euclidean algorithm, we can show that the transcendence
degree of $A_L$ over $\bC$ is $1$. Give a natural grading in $A_L$ by
the order of differential operators, and  define 
\be
\label{specL}
C = \Proj(gr(A_L)) = \overline{\Spec(A_L)}= \Spec(A_L)\sqcup \{\infty\}.
\ee
It is an algebraic curve! It is called the \textbf{spectral curve} associated with the 
solution $L=L(x; \mathbf{t})$. Moreover, it is proved 
in \cite{M1984} that $C$ is of (arithmetic) genus $g$, 
the ring of differential operators
$\cD$
in $x$ is an $A_L$-module determining
 a line bundle $\cL$ over $C$, and that the KP equations define a linear
flow on the cohomology $H^1(C,\cO_C)\isom \bC^g$.
A geometric interpretation of the KP equations in this context is that 
they form a
 compatible linear deformation family of the line 
bundle $\cL$ on $C$ along $H^1(C,\cO_C)$
(cf. \cite{LiM1997, M1984,M1994}).

A generalization of the formula \eqref{p-solution} gives the
Lax operator $L$ from a Riemann theta function $\vartheta(z,\Omega)$. 
If is solves the KP system, then it  deforms only along a finite dimensional
direction. Therefore, the argument above shows that the period matrix
$\Omega$ must come from an algebraic curve, based on the unique solvability
of the initial value problem of the evolution equation \eqref{KP} established
in \cite{M1984a, M1988}. The analysis of KP equations
in these earlier papers were done in the formal power series level, 
which was sufficient for the purpose of identifying the finite-dimensional
solutions (a precise argument can be found in \cite{LiM1997}). 
Recently, a powerful theorem is established by 
Magnot and Reyes \cite{MagnotR} that deals with differentiable 
(non-formal) solutions
of KP equations. 

This is a success story of finding a hidden curve in the jungle.

\subsection{Theme $4$: The $\tau$-functions}

In the mid 1970s, Ryogo Hirota was developing a novel mechanism to
calculate exact \emph{soliton} solutions to nonlinear PDEs
such as  the KdV equation, introducing
the \emph{Hirota bilinear differentiation}
$$
D_x \; f(x)\bullet g(x): = \left.\frac{\partial}{\partial s} \big(f(x+s)g(x-s)\big)
\right|_{s=0}
$$
and a new dependent variable $\tau$.
For a polynomial $P(D)$ in $D$, the Hirota bilinear differentiation is
defined to be
$$
P(D_x) \; f(x)\bullet g(x): = \left. P\left(\frac{\partial}{\partial s}\right) \big(f(x+s)g(x-s)\big)
\right|_{s=0}.
$$
Then the KdV equation \eqref{KdV}  becomes
$$
\left(D_x^4 - 4D_xD_t\right)\; \tau\bullet \tau 
$$
with the substitution 
\be
\label{utau}
u(x,t) = \frac{\partial^2}{\partial x^2} \; \log \tau(x,t).
\ee
One can easily check the effectiveness of Hirota's method by 
calculating examples, such as
$$
\tau(x,t) = 1+ c\, e^{2(\lam x + \lam^3 t)}\;,
$$
which gives a $2$-parameter family of $1$-soliton solutions to the KdV equation via \eqref{utau}.

In the late 1970s, Mikio Sato noticed the algebraic mechanism behind
Hirota's method, and discovered  the \emph{Sato Grassmannian}. He then 
identified the Hirota's dependent variable $\tau$, now known as
\emph{Sato's} $\tau$-function,  as the canonical section 
of the determinant line bundle of the Sato Grassmannian.
As a set, the Sato Grassmannian of index $\mu$ is modeled over
$$
Gr(\mu) = \left\{ W\subset \bC((z))\;\left|\; \gam: W\lrar \bC[z^{-1}]
\isom \bC((z))\big/\big(\bC[[z]]\cdot z\big) \text{ is Fredholm of index } \mu
\right\}\right. .
$$
The time evolution determined by the Lax formalism \eqref{KP}
becomes, in this language, the action on the unipotent element
$$
\exp\left(\sum_{n=1}^\infty t_n \Lam^n\right)
\in GL(V)
$$
on the Sato Grassmannian, where $V=\bC((z))$ and
$\Lam\in \mathfrak{gl}_\infty$ is the
maximal nilpotent element corresponding to  the
multiplication of $z^{-1}$ on $V$
(see for example, \cite[Chapter 7]{M1994}). 
The $\tau$-function records this action through the vantage point of 
the determinant line bundle. This also explains why so many
soliton equations are exactly solved in terms of \emph{determinant}.
Since the initial value problem of the KP equations is uniquely
solvable (\cite{M1984a, M1988}), the space of solutions to the 
KP equations is identified with $Gr(\mu=0)$. 

We can immediately appreciate the similarity of \eqref{utau} and
the differential relation between elliptic functions and theta functions. 
Indeed, the Riemann theta functions are $\tau$-functions when the
period matrix comes from a Jacobian. Of course these $\tau$-functions are
very special ones.  They correspond to points $W\in Gr(0)$ 
determined by
$$
W = H^0\big(C, \cL(*p)\big)\subset \bC((z)),
$$
the space of meromorphic sections of $\cL$ with arbitrary poles at
$p$,
where $p\in C$ is a non-singular point of $C$, $z$ is a local 
parameter of $C$ around $p$, and $\cL$ is a line bundle on $C$ of 
degree $g(C)-1$. We then have (see \cite[Lemma 3.7]{M1984a})
$$
\begin{cases}
H^0(C,\cL) \isom \Ker\; \gam\\
H^1(C, \cL) \isom \Coker\; \gam.
\end{cases}
$$

Generically a solution to \eqref{KP} 
produces an infinite-dimensional orbit. 
Among infinite-dimensional orbits, there are many solutions identified
in geometry. One is associated with the Catalan numbers mentioned earlier.
We  review another example, coming from Hurwitz theory, in the subsequent section.

\section{The spectral curve for Hurwitz numbers}
\label{sect:Hurwitz}


\subsection{The Laplace transform}
\label{subsect:Laplace}
The concept of spectral curves as the $B$-model mirror to an $A$-model
counting problem appeared 
in the \emph{remodeling
conjecture}
for Gromov-Witten invariants of toric Calabi-Yau threefolds. This idea has been developed 
 by Mari\~no \cite{Mar}, 
Bouchard-Klemm-Mari\~no-Pasquetti
\cite{BKMP}, and Bouchard-Mari\~no
\cite{BM}, based on the theory of topological recursion 
formulas of Eynard
and Orantin \cite{EO1}.
The remodeling conjecture 
states that the open and closed 
Gromov-Witten invariants of a toric Calabi-Yau
threefold can be  captured by the Eynard-Orantin
topological recursion as a $B$-model that is 
constructed on the mirror curve. This conjecture has been
completely solved, even beyond the original scope of the conjectures,
 by Fang-Liu-Zong
\cite{FLZ}.

Let us now examine the idea
that \textbf{mirror symmetry is the Laplace transform}
in some cases,
by going through
the concrete example of simple Hurwitz numbers \cite{H}.  
Thus our  question is the following:

\begin{quest}
What is the mirror dual of 
simple Hurwitz numbers?
\end{quest}

\noindent
This is the same question we asked for the
Catalan numbers. Indeed, 
around 2010, and before the author started to 
work on the Catalan number case, Boris Dubrovin and he
 had the following
conversation.
\medskip

\noindent
Dubrovin: \textit{Good to see you, Motohico!} 

\noindent
M: \textit{Hi Boris, good to see you, too!
At last I think I am coming close to 
understanding what mirror symmetry is.}

\noindent
Dubrovin: \textit{Oh, yes? 
All right, what do you
think about mirror symmetry?}

\noindent
M: \textit{It is the Laplace transform!}

\noindent
Dubrovin: \textit{Do you think so, too? But I have
been saying so for the last 15 years!} (He was referring to \cite{D,DZ}.)

\noindent
M: \textit{Oh, have you? But I'm not talking about
the Fourier-Mukai transform or the $T$-duality. It's the 
Laplace transform in the classical complex analysis sense.}

\noindent
Dubrovin: \textit{Of course I mean the same way.}

\noindent
M: \textit{All right, then let's
check if we have the same understanding. 
Question: What is the mirror symmetric dual of a point?}

\noindent
Dubrovin: \textit{It is the Lax operator of 
the KdV equations that was 
identified by Kontsevich.}

\noindent
M: \textit{The Lax operator is the mirror symmetric dual of a point!? Hmm. Ah! I 
think you mean $x=y^2$, don't you?}

\noindent
Dubrovin: \textit{What? Wait a second. Oh, yes, indeed! Now it is my turn
to ask you a question. 
What is the mirror symmetric dual of 
the Weil-Petersson volume of the moduli space
of bordered hyperbolic surfaces discovered by
Mirzakhani?}

\noindent
M: \textit{The sine function
$x=\sin y$.}

\noindent
Dubrovin: \textit{Exactly!}

\noindent
M: \textit{How about simple Hurwitz numbers?}

\noindent
Dubrovin: \textit{Oh, this one you know well: The Lambert 
curve!}

\noindent
M: \textit{Yes, it is. And in all these cases, the mirror
symmetry is the Laplace transform.}

\noindent
Dubrovin: \textit{Of course it is.}

\noindent
M: \textit{A, ha! Then we seem to have
the same understanding of the mirror symmetry.}

\noindent
Dubrovin: \textit{Apparently we do!}

\medskip

\noindent
This conversation took place in the lobby of MSRI, Berkeley, after the
end of the day. 
We shook hands tightly and happily, as we started to depart. 
We then noticed that there was a young mathematician listening to our conversation. 
At the end, he shouted: 
\medskip

\noindent
Bystander: \textit{What, what, what? With this exchange, are
you saying that you understood one  another? Unbelievable!}

\medskip

In the spirit of the above dialogue, the answer
to our  question should be:

\begin{thm}[\cite{BM, EMS, MZ}]
The mirror dual to the  simple Hurwitz numbers
is the Lambert curve
$x=y e^{-y}$.
\end{thm}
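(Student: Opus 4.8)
\medskip

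\noindent\textbf{Proof proposal.} The plan is to follow the route of \cite{BM, EMS, MZ}: reduce the claim to the genus~$0$, one marked point generating function, identify its inverse (Laplace transform) as the Lambert curve, and then verify that the Laplace transform of the combinatorial recursion of the problem — the cut-and-join equation of \cite{GJ, V} — is the Eynard--Orantin topological recursion \cite{EO1} built on that curve. Recall that a simple Hurwitz number $H_{g,n}(\mu_1,\dots,\mu_n)$ counts, weighted by automorphisms, the connected degree $d=\mu_1+\cdots+\mu_n$ branched covers of $\bP^1$ with ramification profile $(\mu_1,\dots,\mu_n)$ over one fixed point and simple branching over $r=2g-2+n+d$ other fixed points. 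By the guiding principle stressed in the introduction — \emph{the spectral curve is always the generating function of the type $(0,1)$-invariants} — it suffices to identify the genus~$0$, one-part generating function together with its canonical bilinear kernel, and then to check that this data regenerates the whole tower $\{H_{g,n}\}_{2g-2+n>0}$.

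\medskip

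\noindent\textbf{Step 1 (the $(0,1)$-invariant is the Lambert function).} A connected genus~$0$ cover of $\bP^1$ with a single totally ramified point and only simple branching elsewhere is, in the combinatorial normalization fixed in Section~\ref{sect:Hurwitz}, encoded by a labeled rooted tree; since there are $k^{k-1}$ rooted trees on $k$ nodes (Cayley), the genus~$0$ one-part generating function is
\be
y(x) = \sum_{k=1}^\infty \frac{k^{k-1}}{k!}\,x^k .
\ee
The recursive structure of rooted trees — deleting the root splits a tree into an unordered family of rooted subtrees — yields at once the functional equation $y = x e^{y}$, equivalently $x = y e^{-y}$; this is the combinatorial identity of Section~\ref{Coda}, and it can also be read off from Lagrange's Inversion Formula \cite{WW1927} (or, following \cite{BM}, from the limit of the mirror curves of \cite{BKMP}). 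After the automorphism of $\bP^1$ normalizing the local coordinate at the ramification point $y=1$ (where $x=e^{-1}$), the $(0,1)$-datum of the problem is exactly the Lambert curve $x=ye^{-y}$.

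\medskip

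\noindent\textbf{Step 2 (from $(0,1)$ to all $(g,n)$).} Apply the Laplace transform to the cut-and-join equation of \cite{GJ, V}. By \cite{MZ}, for $2g-2+n>0$ the transformed Hurwitz numbers $\widehat H_{g,n}$ become \emph{polynomials} in the appropriate local coordinates on the Lambert curve, and via the ELSV formula \cite{ELSV} these polynomials encode the Hodge integrals over $\Mbar_{g,n}$. One then verifies that the Laplace-transformed cut-and-join relation is precisely the Eynard--Orantin recursion with input datum the pair $\bigl(x=ye^{-y},\ \tfrac{dy_1\,dy_2}{(y_1-y_2)^2}\bigr)$; since that recursion is uniquely determined by its $(0,1)$ and $(0,2)$ parts, the Lambert curve is the $B$-model mirror, which is the assertion.

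\medskip

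The main obstacle is Step~2: identifying the Laplace transform of the cut-and-join equation \emph{verbatim} with the topological recursion. This requires a careful treatment of the Laplace kernel and of the change of variables $x=ye^{-y}$, after which the combinatorial sum over the cut and the join of partitions must be matched, term by term, with the residue at $y=1$ in the Eynard--Orantin formula; the delicate point is the exact form of the recursion kernel near the simple ramification point of $x=ye^{-y}$. This is the content of \cite{EMS}, and it is the polynomiality of \cite{MZ} that makes the residue computation tractable and, as a byproduct, yields the short proofs of the Witten--Kontsevich theorem and the Faber--Pandharipande $\lambda_g$-theorem advertised in the introduction, through ELSV.
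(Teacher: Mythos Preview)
Your outline is correct and follows exactly the paper's route through Section~\ref{sect:Hurwitz}: the $(0,1)$ generating function is identified with the Lambert curve (the paper does this both via Lagrange inversion in Section~\ref{Fugue2} and via a tree-counting identity in Section~\ref{Coda}), and the Laplace transform of the cut-and-join equation is then shown to coincide with the Eynard--Orantin recursion on that curve, which is precisely the content of \cite{EMS,MZ} as you indicate. Your root-deletion derivation of $y=xe^{y}$ in Step~1 is a standard variant of the paper's based-tree argument \eqref{treecount} in Section~\ref{Coda}; otherwise the two developments are identical.
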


A  mathematical picture  has emerged in 
the last two decades since the discoveries
of Eynard-Orantin \cite{EO1},
Mari\~no \cite{Mar}, 
 Bouchard-Klemm-Mari\~no-Pasquetti
\cite{BKMP}
and Bouchard-Mari\~no \cite{BM} in physics, and 
 many mathematical efforts including
\cite{BHLM,CMS,  DMSS, DMNPS, EMS, FLZ,LM, MP2012,
MS2008, MS2015, MZ, N1,N3, N4,NS1,NS2,SW}. As a working hypothesis, we phrase it
in the form of a principle.

\begin{prin}
\label{prin}
For a number of interesting
cases, we have  the following general structure.
\begin{itemize}
\item
On the   $A$-model side of  topological string
theory, we have a class of mathematical problems
arising from combinatorics, geometry, and topology. 
The common feature of these problems is that they
are somehow related to a lattice point counting
of a collection of polytopes.
\item
On the $B$-model side, we have a universal theory
due to Eynard and Orantin \cite{EO1}. It is a
framework of the recursion formula of a particular
kind that is based on a \textbf{spectral curve} and
 two analytic  functions (often with singularities) on it that
immerse  the curve
into a complex symplectic surface as a complex Lagrangian.
\item
The  passage from  $A$-model
to  $B$-model, i.e., the mirror symmetry operation
of the class of problems that we are concerned,
is given by the \textbf{Laplace transform}.  The
spectral curve on the $B$-model side is
defined  as the \textbf{Riemann surface}
of the Laplace transform, which means that it is the
domain of holomorphy of the Laplace transformed
function.
\end{itemize}
\end{prin}

There are many examples of 
mathematical
problems that fall in to this principle.
Among them is the theory of simple
Hurwitz numbers \cite{EMS,MZ} that we are going to present now. 
Besides Hurwitz numbers, numerous mathematical results have been 
established. They include
counting of Grothendieck's dessins d'enfants
\cite{CMS, MP2012, N1,NS1},
higher genus Catalan numbers \cite{OM4,DMSS},  
orbifold Hurwitz numbers \cite{BHLM}, double Hurwitz numbers
and higher spin structures \cite{MSS},
and the stationary Gromov-Witten invariants of $\bP^1$
\cite{DMNPS, NS2}. One of the most important results in 
the Gromov-Witten setting is 
the  remodeling conjecture 
\cite{BKMP} 
and its solution \cite{FLZ} mentioned above.
A new direction was suggested in Kontsevich-Soibelman \cite{KS2017}.
It is now impossible to list all research in this direction,
simply called \emph{topological recursion},
so we refer to recent papers by many
authors and the papers they cite: Andersen, Borot, Bouchard, Chidambaram,
Do, Dunin-Barkowski, Eynard, 
Garcia-Failde, Iwaki,
Lewa\'nski, Norbury, Orantin, Osuga, Shadrin, ...  A comprehensive
and the most recent
introduction is given by Bouchard \cite{B2024}, in which one can see
how the focus of the topological recursion community has changed into
new developments in the last decade.

The study of simple Hurwitz numbers
is one of the earliest results in this direction and  exhibits 
all important ingredients found in the later 
papers.

\subsection{Simple Hurwitz numbers}
In this subsection, we define simple Hurwitz 
numbers, and a combinatorial 
equation that they satisfy, the \emph{cut-and-join
equation} \cite{GJ, V},  is proved.
In the next subsection, the Laplace transform of the
Hurwitz numbers is presented. The Laplace
transformed holomorphic functions live on 
the mirror $B$-side of the model,
according to Princeple~\ref{prin}. 
The Lambert curve is defined as the
domain of holomorphy of these holomorphic
functions.
Then in the following subsections, we give the Laplace transform
of the cut-and-join equation. The result is a
simple \textbf{polynomial recursion formula}
and is equivalent to the Eynard-Orantin topological recursion
for the Lambert curve. We also give a 
straightforward and simple
derivation \cite{MZ} of the Witten-Kontsevich theorem
on the $\psi$-class intersection numbers
\cite{DVV, K1992, W1991},
and the $\lambda_g$-formula of Faber and Pandharipande
\cite{FP1,FP2}, using the recursion formula we establish
in \cite{MZ}.

 In all examples of Principle~\ref{prin}
we know so far, the $A$-model side always has a series of 
combinatorial equations that should uniquely determine 
the quantities in question, at least
theoretically. But in practice solving these
equations is quite complicated. 
As we develop in these lectures, the Laplace
transform changes these equations to a
\emph{topological recursion} in the $B$-model 
side, which is an inductive formula based 
 on the absolute value of the Euler 
characteristic of  punctured surfaces.

\begin{rem}
Recently there have been  totally unexpected spectacular developments 
in the topology
of moduli spaces $\cM_g, \cM_{g,n}$, and $\Mbar_{g,n}$ 
(see for example, \cite{CLP}).  
From the point of view of \emph{In Search of a Hidden Curve}, 
and from the perspective of Norbury discovering the Norbury classes
in $H^*(\Mbar_{g,n}, \bQ)$ using topological recursion
\cite{N4}, which
were later identified as the Euler classes associated with a 
supersymmetric generalization of the work of Mirzakhani
by Stanford-Witten \cite{SW}, we wonder if there could be a
\emph{spectral curve} behind the  recent
developments. 
\end{rem}

A \emph{simple Hurwitz number} represents the number
of a particular type of meromorphic functions
defined on an algebraic curve $C$ of genus $g$.
Let $\mu=(\mu_1,\dots,\mu_\ell)\in\bZ_+ ^\ell$
be a \emph{partition} of a positive integer
$d$ of length $\ell$. This means
that $|\mu|\overset{\text{def}}{=}
\mu_1+\cdots+\mu_\ell = d$. Instead of ordering 
 parts of $\mu$ in the decreasing order,
 we consider them as a vector consisting of
 $\ell$ positive integers. 
By a \emph{Hurwitz covering} of 
type $(g,\mu)$ we mean a
meromorphic function $h:C\rightarrow \bC$
that has $\ell$ labelled poles $\{x_i,\dots,x_\ell\}$,
such that the pole order of $h$ at $x_i$ is 
 $\mu_i$ for every $i=1,\dots,\ell$,
 and that except for these poles, the holomorphic
 $1$-form $dh$ has simple zeros on
 $C\setminus \{x_i,\dots,x_\ell\}$ with 
 distinct critical values of $h$.
 A meromorphic function of $C$ is a holomorphic
 map of $C$ onto $\bP^1$. 
 In algebraic geometry, the situation described above
 is summarized as follows: $h:C\rightarrow \bP^1$ is
 a \emph{ramified covering} of $\bP^1$, 
 simply ramified except for $\infty\in\bP^1$.
 We identify two Hurwitz coverings $h_1:C_1\rightarrow
 \bP^1$ with poles at
 $\{x_1,\dots,x_\ell\}$ and $h_2:C_2\rightarrow
 \bP^1$ with poles at 
 $\{y_1,\dots,y_\ell\}$
 if there is a biholomorphic map 
 $\phi:C_1\overset{\sim}{\rightarrow} C_2$
 such that $\phi(x_i) = y_i$, $i=1,\dots,\ell$, and 
 \begin{equation*}
		\xymatrix{
		C_1\ar[dr]_{h_1}  
		\ar[rr]^\phi _\sim &&C_2\ar[dl]^{h_2}\\
		&\bP ^1&.
		}
\end{equation*} 
When $C_1=C_2$, $x_i=y_i$,
 and $h_1=h_2=h$, such a
biholomorphic map $\phi$ is called an 
\emph{automorphism}
of a Hurwitz covering $h$.
Since biholomorphic Hurwitz coverings are
identified,  following the stack theoretic principle, we  
count the number of  Hurwitz coverings with the 
automorphism factor $1/|\Aut(h)|$.
And when the above $\phi:C_1{\rightarrow} C_2$
is merely a homeomorphism,  we say 
$h_1$ and $h_2$ have the same 
\emph{topological type}.

We are calling a meromorphic function 
a \emph{covering}. This is because if we
remove the critical values of $h$ (including $\infty$) from
$\bP^1$, then on this open set $h$ becomes
 a topological  covering. More precisely,
 let $B=\{z_1,\dots,z_r,\infty\}$ denote the
set of distinct critical values of $h$. 
Then 
$$h:C\setminus h^{-1}(B)
\longrightarrow \bP^1\setminus B$$ 
is a topological covering
of degree $d$. Each $z_k\in B$
 is a \emph{branched
point} of $h$, and a critical point (i.e., a zero of 
$dh$) on $C$ is called a \emph{ramification point}
of $h$. Since $dh$ has only simple zeros with
distinct critical values of $h$, the number of
ramification points of $h$, except for 
the poles, is equal to the number of branched points,
which we denote by $r$. Therefore, $h^{-1}(z_k)$
consists of $d-1$ points. Then by comparing
the Euler characteristic of the covering space and
its base space, we obtain
$$
d(2-r-1)=
d \cdot \rchi(\bP^1 \setminus B)
=\rchi(C\setminus h^{-1}(B))
\\
= 2-2g-r(d-1)-\ell.
$$
Thus we establish the  \textbf{Riemann-Hurwitz formula}
\begin{equation}
\label{eq:RH}
r=2g-2+|\mu|+\ell.
\end{equation}
What we wish to enumerate is:

\begin{Def}
The simple Hurwitz number of type $(g,\mu)$
for $g\ge 0$ and $\mu\in\bZ_+ ^\ell$
that we consider in these lectures is 
\begin{equation}
\label{eq:Hgmu}
H_g(\mu) = \frac{1}{r(g,\mu)!}\; \sum_{[h] \text{ type }
(g,\mu)}
\frac{1}{|\Aut(h)|},
\end{equation}
where the sum runs all topological equivalence classes $[h]$
of Hurwitz coverings $h$ of type $(g,\mu)$. 
Here,
$$
r=r(g,\mu) = 2g-2+(\mu_1+\cdots+\mu_\ell) + \ell
$$
is the number of simple ramification points
of $h$.
\end{Def}

\begin{rem}
Our definition of simple Hurwitz numbers
differs from the standard definition
by two automorphism factors. The quantity 
$h_{g,\mu}$ of \cite{ELSV} and $H_g(\mu)$
are related by
$$
H_g(\mu) = \frac{|\Aut(\mu)|}{r!} \; h_{g,\mu},
$$
where $\Aut(\mu)$ is the  group 
of permutations that permutes equal parts
of $\mu$ considered as a partition. 
This is due to the convention
that we label the poles of $h$ and consider
$\mu\in\bZ_+ ^\ell$ as a vector, 
while we do not label simple ramification points.
\end{rem}

\begin{rem}
Note that interchanging the entries of $\mu$ 
 means permutation of the label of
the poles $\{x_1,\dots,x_\ell\}$ of $h$.
Thus it does not affect the count of simple
Hurwitz numbers. 
Therefore, as a function in $\mu\in\bZ_+ ^\ell$,
$H_g(\mu)$ is a symmetric function.
\end{rem}

Simple Hurwitz numbers satisfy a 
simple equation, known as the \emph{cut-and-join
equation} \cite{GJ, V}. Here we give it in the
format used in \cite{MZ}.

\begin{prop}[Cut-and-join equation \cite{MZ}]
Simple Hurwitz numbers satisfy
\begin{multline}
\label{eq:caj}
r(g,\mu) H_g(\mu)
=
\sum_{i<j} (\mu_i+\mu_j)H_g(\mu(\hat{i},\hat{j})
,\mu_i+\mu_j)
\\
+
\half \sum_{i=1} ^\ell \sum_{\alpha+\beta=\mu_i}
\alpha\beta
\left[
H_{g-1}(\mu(\hat{i}),\alpha,\beta)
+
\sum_{\substack{g_1+g_2=g
\\
I\sqcup J=\mu(\hat{i})}}
H_{g_1}(I,\alpha) H_{g_2}(J,\beta)
\right].
\end{multline}
Here we use the following notations.
\begin{itemize}
\item
$\mu(\hat{i})$ is the vector of $\ell -1$
entries obtained by deleting the $i$-th entry
$\mu_i$.
\item
$(\mu(\hat{i}),\alpha,\beta)$ is the 
 vector of $\ell +1$
entries obtained by appending two new
entries $\alpha$ and $\beta$ to $\mu(\hat{i})$.
\item
$\mu(\hat{i},\hat{j})$ is the vector of $\ell -2$
entries obtained by deleting the $i$-th and the $j$-th 
entries
$\mu_i$ and $\mu_j$.
\item
$(\mu(\hat{i},\hat{j})
,\mu_i+\mu_j)$ is the 
 vector of $\ell -1$
entries obtained by appending a new
entry $\mu_i+\mu_j$ to $\mu(\hat{i},\hat{j})$.
\end{itemize}
The final sum is over all partitions of $g$
into non-negative integers $g_1$ and $g_2$,
and a disjoint union decomposition (or a set partition) of 
entries of $\mu(\hat{i})$ as a set, allowing 
the empty set.
\end{prop}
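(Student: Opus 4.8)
The plan is to prove the cut-and-join equation combinatorially, by translating simple Hurwitz coverings into monodromy data in the symmetric group and computing the effect of deleting one branch point. First I would fix the degree $d=|\mu|$, a base point on $\bP^1$ off the branch locus $B$, and a permutation $\pi_\mu\in S_d$ of cycle type $\mu$ whose $\ell$ cycles are labeled $1,\dots,\ell$, the $i$-th of length $\mu_i$. The classical equivalence between Hurwitz coverings of type $(g,\mu)$ and monodromy representations of $\pi_1(\bP^1\setminus B)$ identifies $H_g(\mu)$ with a normalized count of ordered tuples $(\tau_1,\dots,\tau_r)$ of transpositions in $S_d$, with $r=r(g,\mu)$, satisfying $\tau_1\cdots\tau_r=\pi_\mu^{-1}$ and generating a transitive subgroup; the stack weight $1/|\Aut(h)|$ becomes, after the customary Burnside count over the sheet-relabeling $S_d$-action, a normalization involving $d!$, and the labeling of the cycles of $\pi_\mu$ records the labeling of the poles. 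The left-hand side $r(g,\mu)H_g(\mu)$ is then, with the same normalization, the count of such factorizations equipped with a choice of one distinguished transposition $\tau$ among the $r$ of them.

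Next I would carry out the familiar reduction step: by a standard conjugation trick (conjugating the part of the word preceding $\tau$ by $\tau$ and then deleting $\tau$, remembering its position), a factorization with a distinguished transposition corresponds bijectively to a length-$(r-1)$ tuple $(\sigma_1,\dots,\sigma_{r-1})$ whose product is $\tau\pi_\mu^{-1}$, so the count organizes according to the cycle type of $\tau\pi_\mu^{-1}$, which is obtained from $\mu$ by one elementary move governed by how the two points swapped by $\tau$ sit among the cycles of $\pi_\mu$. In the \emph{join} case $\tau$ mixes two distinct cycles, of lengths $\mu_i$ and $\mu_j$, so they merge into one of length $\mu_i+\mu_j$; transitivity of $(\sigma_i)$ is automatic, since the product $\tau\pi_\mu^{-1}$ already acts as a single $(\mu_i+\mu_j)$-cycle on the union of those cycles, so the $\langle\sigma_i\rangle$-orbits agree with the $\langle\sigma_i,\tau\rangle$-orbits. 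We thus land on Hurwitz data of type $(g,\mu(\hat i,\hat j),\mu_i+\mu_j)$, with $r$ dropping by one; counting how many $\tau$ produce a given such covering, and reinstating the $d!$- and $|\Aut|$-normalizations, yields the first term $\sum_{i<j}(\mu_i+\mu_j)H_g(\mu(\hat i,\hat j),\mu_i+\mu_j)$.

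In the \emph{cut} case $\tau$ swaps two points of one cycle, of length $\mu_i$, which splits into cycles of lengths $\alpha,\beta$ with $\alpha+\beta=\mu_i$; the transposition count supplies the weight $\alpha\beta$, while summing over ordered pairs $(\alpha,\beta)$ accounts for the overall $\half$. Now $(\sigma_1,\dots,\sigma_{r-1})$ need no longer be transitive. When it is, a Riemann--Hurwitz count (one more pole, one fewer branch point, the degree $d$ unchanged) forces the genus to fall to $g-1$, contributing $H_{g-1}(\mu(\hat i),\alpha,\beta)$. When it is not, the sheet set breaks into two blocks invariant under the covering, on which $(\sigma_i)$ restricts to connected Hurwitz data of types $(g_1,I,\alpha)$ and $(g_2,J,\beta)$; additivity of both $r$ and the genus under Riemann--Hurwitz forces $g_1+g_2=g$ and $I\sqcup J=\mu(\hat i)$, and the combinatorics of choosing the two blocks and distributing the remaining $r-1$ branch points among them is exactly what reconciles the factors $d!$, $d_1!$, $d_2!$ and the binomial $\binom{r-1}{r_1}$, producing $\sum_{g_1+g_2=g,\ I\sqcup J=\mu(\hat i)}H_{g_1}(I,\alpha)H_{g_2}(J,\beta)$ and completing the right-hand side.

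The main obstacle, as is typical for cut-and-join identities, is the bookkeeping of constants: pinning down on the nose the automorphism and $d!$ factors in the geometry-to-monodromy dictionary (including the labeled-pole, unlabeled-branch-point conventions and the unstable cases such as $(g,\mu)=(0,(1))$ and $(0,(2))$ that occur on the right), the binomial coefficients in the disconnected case, the transposition counts yielding $\mu_i+\mu_j$ and $\alpha\beta$, and the lone $\half$ --- each routine in isolation, but all required to fit together exactly. A cleaner if less self-contained alternative is to deduce the identity from left multiplication by $\sum_{\tau}\tau$, the sum over all transpositions, on the center of $\bC[S_d]$, transported through the Frobenius characteristic map, under which the two terms appear directly as the join and cut pieces of the cut-and-join operator $\half\sum_{i,j\ge1}\bigl((i+j)\,p_ip_j\,\partial_{p_{i+j}}+ij\,p_{i+j}\,\partial_{p_i}\partial_{p_j}\bigr)$ on symmetric functions in power sums.
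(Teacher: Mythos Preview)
Your proposal is correct and follows essentially the same approach as the paper: interpret Hurwitz coverings via monodromy data as transitive transposition factorizations in $S_d$, mark one of the $r$ transpositions (the paper phrases this geometrically as ``merging the branch point $z_r$ with $\infty$''), and analyze how multiplying the ramification permutation by that transposition either joins two cycles or cuts one, with the latter splitting into the connected genus-drop case and the disconnected product case. Your treatment is a bit more algebraic in tone (factorizations and the conjugation trick) and more explicit about the $d!$ and automorphism bookkeeping than the paper's sketch, and you add the symmetric-function alternative via the cut-and-join operator, but the underlying argument is the same.
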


\begin{rem}
Since $H_g(\mu)$ is a symmetric function,
the way we append a new entry to a vector does not
affect the function value.
\end{rem}

The idea to prove the formula is reducing the number
$r$ of simple ramification points. Note that
$$
h:C\setminus h^{-1}(B) \lrar
\bP^1\setminus B
$$
is a topological covering of degree $d$. 
Therefore, it is obtained by a representation
$$
\rho:\pi_1(\bP^1\setminus B)
\lrar S_d,
$$
where $S_d$ is the permutation group of $d$ letters.
The covering space $X_\rho$ of $\bP^1\setminus B$
is obtained by the quotient construction
$$
X_\rho = 
\widetilde{X}\times_{\pi_1(\bP^1\setminus B)}
[d],
$$
where 
$\widetilde{X}$ is the universal covering space
of $\bP^1\setminus B$, and $[d]=\{1,2,\dots,d\}$ is
the index set on which $\pi_1(\bP^1\setminus B)$
acts via the representation $\rho$.

To make $X_\rho$ a Hurwitz covering, we need
to specify the monodromy of the representation
at each branch point of $B$.
Let $\{\gam_1,\dots,\gam_r,\gam_\infty\}$
denote the collection of non-intersecting loops 
on $\bP^1$, starting from $0\in\bP^1$, 
rotating around $z_k$ counter-clockwise, and 
coming back to $0$, for each $k = 1,\dots,r$.
The last loop $\gam_\infty$
does the same for  $\infty\in\bP^1$.
Since $\bP^1$ is simply connected, we have
$$
\pi_1(\bP^1\setminus B) = 
\la \gam_1,\dots,\gam_r,\gam_\infty|
\gam_1\cdots\gam_r\cdot\gam_\infty=1\ra.
$$
Since $X_\rho$ must have $r$ simple ramification
points over $\{z_1,\dots,z_r\}$, the monodromy
at $z_k$ is given by a transposition
$$
\rho(\gam_k) = (a_kb_k)\in S_d,
$$
where $a_k,b_k\in [d]$ and all other indices are fixed
by $\rho(\gam_k)$. To impose the condition on
poles $\{x_1,\dots,x_\ell\}$ of $h$, we need
$$
\rho(\gam_\infty) = c_1 c_2\cdots c_\ell,
$$
where $c_1,\dots,c_\ell$ are disjoint cycles of
$S_d$ of length $\mu_1,\dots,\mu_\ell$, respectively.

We want to reduce the number $r$ by one. To do so,
we simply merge $z_r$ with $\infty$. The monodromy
at $\infty$ then changes from $c_1 c_2\cdots c_\ell$
to $(ab)\cdot c_1 c_2\cdots c_\ell$, where
$(ab)=(a_rb_r)$ is the transposition corresponding to
$\gamma_r$. There are two cases we have now:
\begin{enumerate}
\item Join case: 
$a$ and $b$ belong to two disjoint cycles,
say $a\in c_i$ and $b\in c_j$;
\item Cut case: 
both $a$ and $b$ belong to the same cycle,
say $c_i$.
\end{enumerate}
An elementary computation shows that for Case (1), the product
$(ab)c_ic_j$ is a single cycle of length $\mu_i+\mu_j$.
For the second case, the result depends on how far
$a$ and $b$ are apart in cycle $c_i$. If $b$ appears
$\a$ slots after $a$ with respect to the 
cyclic ordering, then 
\begin{equation}
\label{eq:cut}
(ab)c_i = c_\a c_\b,
\end{equation}
where $\a+\b = \mu_i$. The cycles
$c_\a$ and $c_\b$ are disjoint  of length
$\a$ and $\b$, respectively, and  $a\in c_\a$,
and $b\in c_\b$. Note that everything is symmetric
with respect to interchanging $a$ and $b$.
A couple of simple examples makes sense here.
\begin{align*}
\text{Join Case:}\quad &(12)(3517)(46829) = (468173529)
\\
\text{Cut Case:}\quad &(12)(351746829) = (3529)(17468).
\end{align*}

With this preparation, we can now give the proof
of (\ref{eq:caj}).
The right-hand side of (\ref{eq:caj}) represents 
the set of all monodromy representations 
obtained by merging one of the branch points
$z_k$ with $\infty$. The factor $r$ on the
left-hand side represents 
the choice of $z_k$.

\begin{proof} Since we are reducing $r=2g-2+d+\ell$ by one
without changing $d$, there are
three different ways of reduction:
\begin{align}
\label{eq:join}
(g,\ell)&\longmapsto (g,\ell-1)
\\
\label{eq:cut1}
(g,\ell)&\longmapsto (g-1,\ell+1)
\\
\label{eq:cut2}
(g,\ell)&\longmapsto (g_1,\ell_1+1)+(g_2,\ell_2+1),
\text{ where }
\begin{cases}
g_1+g_2=g
\\
\ell_1+\ell_2 = \ell -1.
\end{cases}
\end{align}

\begin{itemize}
\item
The first reduction (\ref{eq:join}) is exactly the
first line of the right-hand side
of (\ref{eq:caj}), which corresponds to
the join case. Two cycles of length 
$\mu_i$ and $\mu_j$ are joined to form
a longer cycle of length $\mu_i+\mu_j$. 
Note that the number $a$ has to be recorded somewhere
in this long cycle. This explains the factor
$\mu_i+\mu_j$. Then the number $b$ is automatically
recorded,
because it is  the entry appearing exactly $\mu_i$
slots after $a$ in this 
long cycle.

\item
The second line of the right-hand side 
of (\ref{eq:caj}) represents
the cut cases. In (\ref{eq:cut}), we have
$\a$ choices for $a$ and $\b$ choices for $b$. 
The symmetry of interchanging $a$ and $b$ 
explains the factor $\half$.
The first term of the second line of 
(\ref{eq:caj}) corresponds to (\ref{eq:cut1}).

\item
The second term of the second line
corresponde to (\ref{eq:cut2}).
Note that in this situation, merging a branched point
with $\infty$ breaks the connectivity of 
the Hurwitz covering. We have two ramified 
coverings $h_1:C_1\rar \bP^1$ of 
degree $d_1$ and genus $g_1$
with $\ell_1+1$ poles, and 
$h_2:C_2\rar \bP^1$ of degree $d_2$
and genus $g_2$ with
$\ell_2+1$ poles. If we denote by $r_i$ the
number of simple ramifications points of 
$h_i$ for $i=1,2$, then we have
$$
\begin{matrix}
&r_1 &= &2g_1-2+d_1+\ell_1+1\\
+)&r_2&= &2g_2-2+d_2+\ell_2+1\\
\hline
&r-1 &= &\;2\,g-2\,+\,d\,+\,\ell\,-\,1.
\end{matrix}
$$
\end{itemize}
This completes the proof of (\ref{eq:caj}).
\end{proof}

\begin{rem}
The reduction of the number $r$ of simple ramification 
points by one is exactly reflecting the 
reduction of the Euler characteristic of 
the punctured surface $X_\rho = C\setminus h^{-1}(B)$
appearing in our consideration
by one. Since we do not change
the degree $d$ of the covering, the reduction of $r$ 
is simply reducing $2g-2+\ell$ by one.
\end{rem}

\subsection{The Laplace transform of the
simple Hurwitz numbers}
\label{Fugue2}

Let us now compute the Laplace transform
of the simple Hurwitz number $H_g(\mu)$,
considered as a function in $\mu\in\bZ_+ ^\ell$.
According to Principle~\ref{prin},
the result should give us  the mirror dual 
of simple Hurwitz numbers.
We explain where the \textbf{Lambert curve}
$x=y e^{1-y}$ comes from, and its
essential role in
computing the Laplace transform.
The most surprising feature is that the result
of \textbf{the Laplace transform of $H_g(\mu)$ is
a polynomial} if $2g-2+\ell >0$. 
This polynomiality produces powerful
consequences, which is the main 
subject of the following subsections.

The most important reason for our  interest
in Hurwitz numbers in this summer school on \emph{complex Lagrangians}
 lies in the  
theorem due to Ekedahl, Lando, Shapiro and 
Vainshtein \cite{ELSV} that relates the simple Hurwitz numbers
with the intersection numbers of tautological 
classes on the moduli spaces of curves.
From analyzing their formula, we see the emergence of
the spectral curve, which we consider as a complex Lagrangian 
in the holomorphic symplectic surface.

Let us recall the necessary notations here. 
Our main object is the moduli stack $\Mbar_{g,\ell}$
consisting of stable algebraic curves of genus $g\ge 0$
with $\ell\ge 1$ distinct smooth labeled points. 
Forgetting the last labeled point on a curve 
gives a canonical projection 
$$
\pi:\Mbar_{g,\ell+1}\lrar 
\Mbar_{g,\ell}.
$$
Since the last labeled point moves on the curve,
the projection $\pi$ can be considered as a
\textbf{universal family} of $\ell$-pointed curves. This is 
because for each point $[C,(x_1,\dots,x_\ell)]
\in \Mbar_{g,\ell}$, the fiber of $\pi$ is 
indeed $C$ itself. 

If $x_{\ell+1}\in C$ is a smooth
point of $C$ other than $\{x_1,\dots,x_\ell\}$, 
then this point represents an element
$[C,(x_1,\dots,x_{\ell+1})]\in \Mbar_{g,\ell+1}$.
If $x_{\ell+1}=x_i$ for some $i=1,\dots,\ell$, 
then this point represents a stable curve
obtained by attaching a rational curve $\bP^1$ 
to $C$ at the original location of $x_i$, 
while carrying three special points on it. One is 
the singular point at which $C$ and $\bP^1$ 
intersects. The other two points are labeled
as $x_i$ and $x_{\ell+1}$. And if 
$x_{\ell+1}\in C$ coincides with one of the nodal points
of $C$, say $x\in C$, then this point represents 
another stable curve. This time,
consider the local normalization 
$\widetilde{C}\rar C$
about the singular point $x\in C$, and let $x_+$ and
$x_-$ be the two points in the fiber.
The stable curve we have is the curve $\widetilde{C}
\cup \bP^1$, where the two curves intersect at 
$x_+$ and $x_-$. The labeled point $x_{\ell+1}$ is
placed on the attached $\bP^1$ different from these
two singular points. Because $\bP^1$ with three distinct labeled
points form a moduli space consisting of a single point, the processes
above (called \emph{stabilization}) leave no ambiguity.

\begin{figure}[htb]
\includegraphics[width=2.9in]{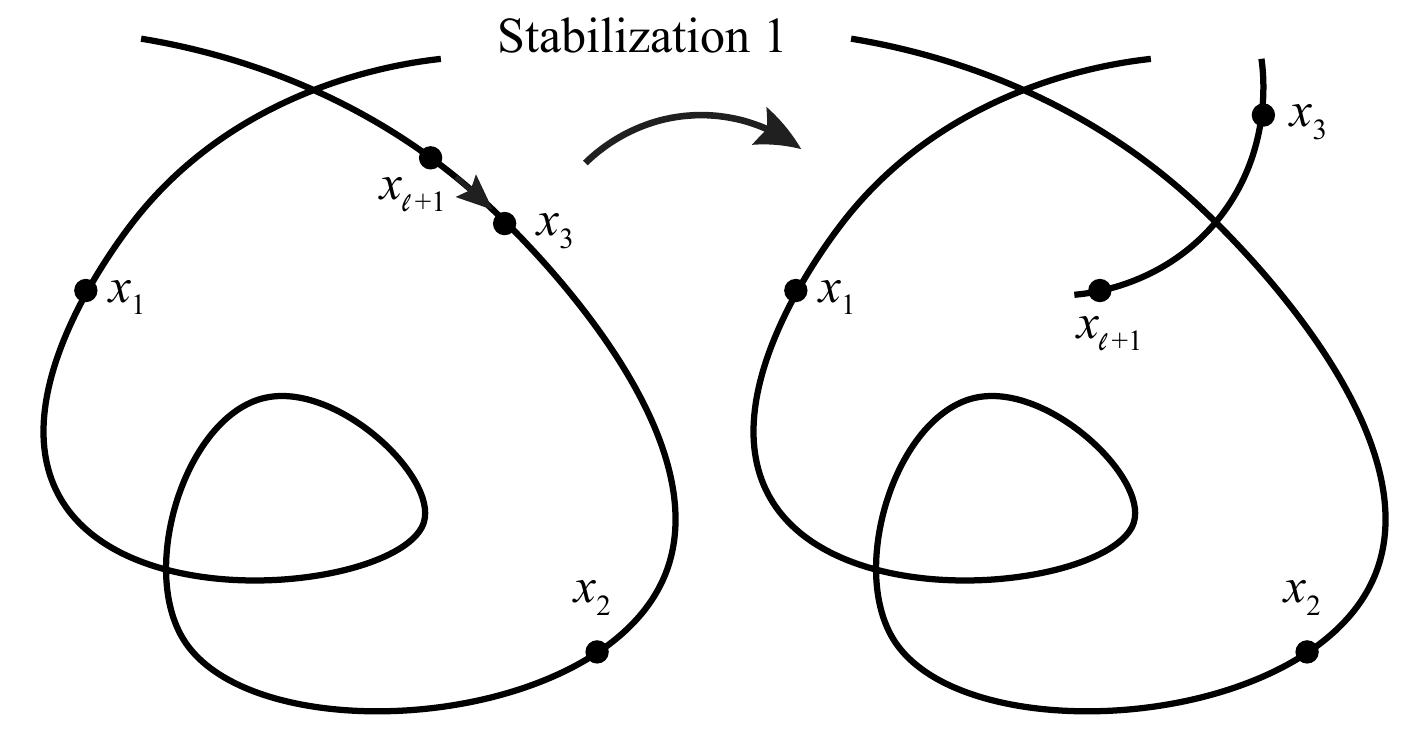}
\includegraphics[width=2.9in]{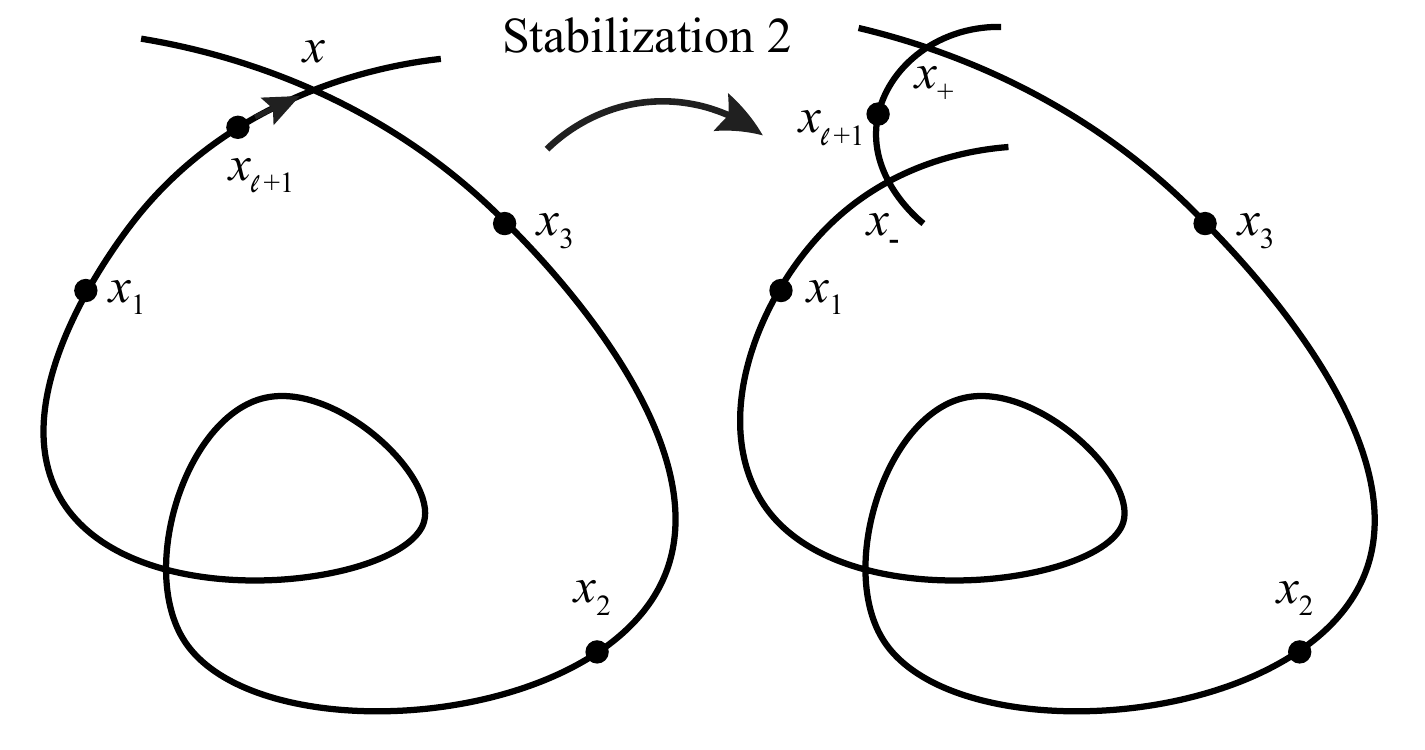}
\caption{A new point $x_{\ell+1}$ is attached on top of $x_3$ (left),
and at nodal point $x$ (right).}
\end{figure}

A universal family produces what we call
the \emph{tautological} bundles on the moduli space.
The cotangent sheaf $T^*C=\omega_C$ of each fiber of
$\pi$ is glued together to form a \emph{relative dualizing
sheaf} $\omega$ on $\Mbar_{g,\ell+1}$. 
The push-forward 
$$
\bE = \pi_*\omega
$$ 
is such
a tautological vector bundle on $\Mbar_{g,\ell}$ of 
fiber dimension 
$$
\dim H^0(C,\omega_C) = g,
$$
and is called the \emph{Hodge bundle} on 
$\Mbar_{g,\ell}$.
By assigning $x_{\ell+1} = x_i$ to 
each $[C,(x_1,\dots,x_\ell)]\in \Mbar_{g,\ell}$,
we construct a section
$$
\sigma_i :\Mbar_{g,\ell}\lrar \Mbar_{g,\ell+1}.
$$
It defines another tautological bundle
\be
\label{tautologicalL}
\bL_i=\sigma^*(\omega)
\ee
on $\Mbar_{g,\ell}$. 
The fiber of $\bL_i$ at $[C,(x_1,\dots,x_\ell)]$ is
identified with the cotangent line $T_{x_i} ^*C$.

The \emph{tautological classes} of $\Mbar_{g,\ell}$
are rational cohomology classes including
$$
\psi_i  = c_1(\bL_i)\in H^2(\Mbar_{g,\ell},\bQ)
\quad \text{and}\quad
\lambda_j = c_j(\bE)\in H^{2j}(\Mbar_{g,\ell},\bQ).
$$
In these lectures we do not consider the other classes,
such as the $\kappa$-classes.

With these notational preparations, we can now 
state an amazing theorem.

\begin{thm}[The ELSV formula \cite{ELSV}]
The simple Hurwitz numbers are expressible as
the intersection numbers of tautological 
classes on the moduli space $\Mbar_{g,\ell}$ as
follows. Let $\mu\in \bZ_+ ^\ell$ be a
positive integer vector. Then we have
\begin{equation}
\begin{aligned}
\label{eq:ELSV}
H_g(\mu)
&= 
\prod_{i=1} ^\ell 
\frac{\mu_i ^{\mu_i}}{\mu_i !}
\int_{\Mbar_{g,\ell}}
\frac{\sum_{j=0} ^g  (-1)^j \lambda_j}
{\prod_{i=1} ^\ell (1-\mu_i\psi_i)}
\\
&=
\sum_{n_1,\dots,n_\ell \ge 0}
\sum_{j=0} ^g (-1)^j
\la \tau_{n_1}\cdots \tau_{n_\ell}\lambda_j
\ra_{g,\ell}
\prod_{i=1} ^\ell 
\frac{\mu_i ^{\mu_i+n_i}}{\mu_i !}.
\end{aligned}
\end{equation}
Here we use Witten's symbol
$$
\la \tau_{n_1}\cdots \tau_{n_\ell}\lambda_j
\ra_{g,\ell}
=
\int_{\Mbar_{g,\ell}}
c_1(\bL_1) ^{n_1}\cdots
c_1(\bL_\ell) ^{n_\ell}\cdot
c_j(\bE).
$$
It is $0$ unless $n_1+\cdots+n_\ell + j = 3g-3+\ell$.
\end{thm}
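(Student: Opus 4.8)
\section*{Proof proposal for the ELSV formula}

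The plan is to prove the ELSV formula by Atiyah--Bott virtual localization on a moduli space of relative stable maps to $\bP^1$, in the spirit of Graber--Vakil. First I would realize $H_g(\mu)$ geometrically: with $d=|\mu|$ and $r=r(g,\mu)=2g-2+d+\ell$, consider the moduli space $\Mbar_{g,\ell}(\bP^1,\mu)$ of genus-$g$, $\ell$-pointed stable maps to $\bP^1$ that are relative to $\infty\in\bP^1$ with prescribed ramification profile $\mu$ over $\infty$ (the $i$-th marked point mapping to $\infty$ with multiplicity $\mu_i$). Its virtual dimension equals $r$, in accordance with the Riemann--Hurwitz formula \eqref{eq:RH}, and there is a branch morphism $\mathrm{br}\colon \Mbar_{g,\ell}(\bP^1,\mu)\to \bP^r=\Sym^r\bP^1$ recording the location of the $r$ free simple branch points. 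I would establish that $H_g(\mu)=\tfrac{1}{r!}\int_{[\Mbar_{g,\ell}(\bP^1,\mu)]^{\mathrm{vir}}}\mathrm{br}^*(H^r)=\tfrac{1}{r!}\deg(\mathrm{br})$ for the hyperplane class $H$, being careful to match the normalization against \eqref{eq:Hgmu} and to track the two automorphism factors flagged in the Remark comparing $H_g(\mu)$ with $h_{g,\mu}$ of \cite{ELSV}.

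Next I would run localization for the torus $\bC^*$ acting on $\bP^1$ with fixed points $0,\infty$ and equivariant parameter $t$. The induced fixed loci on $\Mbar_{g,\ell}(\bP^1,\mu)$ are indexed by the usual decorated graphs recording which subcurves are contracted over $0$ or over the expanded target at $\infty$, and which rational components map as $z\mapsto z^k$; since $\int\mathrm{br}^*(H^r)$ is a number it is independent of $t$, so I would compute the full localization sum of rational functions in $t$ and read off its ($t$-independent) value. The \textbf{distinguished fixed locus} is the one in which the whole positive-genus part of the source is contracted to $0$, with one rational tail attached at each of the $\ell$ nodes mapping $\mu_i$-to-$1$ onto $\bP^1$; this locus is a finite quotient of $\Mbar_{g,\ell}$, the $\ell$ attaching nodes becoming its $\ell$ marked points. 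The heart of the computation is that the inverse equivariant Euler class of its virtual normal bundle factors as the product of: (i) the Hodge contribution $\sum_{j=0}^g(-1)^j\lambda_j\,t^{g-j}$ from $H^1$ of the contracted curve; (ii) a node-smoothing factor $\prod_i \frac{1}{t/\mu_i-\psi_i}$; and (iii) the combinatorial weight $\prod_i \frac{\mu_i^{\mu_i}}{\mu_i!}$ coming from deformations of the $z\mapsto z^{\mu_i}$ tails and their automorphisms. Writing $\prod_i \frac{1}{t/\mu_i-\psi_i}=\prod_i \frac{\mu_i}{t}\cdot\frac{1}{1-\mu_i\psi_i/t}$ and extracting the coefficient that survives the dimension constraint $\sum_i n_i+j=3g-3+\ell$ reproduces exactly the right-hand side of \eqref{eq:ELSV}.

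The main obstacle, and the only part that is genuinely work rather than bookkeeping, is showing that the \emph{non-distinguished} fixed loci contribute nothing to the value of the sum. Each such locus gives a non-trivial rational function of $t$, and one must prove they cancel in total. Here I would invoke the Graber--Vakil vanishing theorems for Hodge integrals over moduli spaces of stable maps carrying positive-genus components with non-constant image, together with the rubber calculus organizing the expanded-degeneration components over $\infty$: in the relative setup a contracted positive-genus piece over $\infty$ forces an excess of $\lambda$-classes and kills the corresponding graph by dimension, and the surviving graphs cancel after pushing forward along $\mathrm{br}$. An alternative route I would keep in reserve, closer to the original \cite{ELSV} argument, is to avoid stable maps altogether: use the compactified Hurwitz (admissible covers) space $\overline{\cH}_{g,\mu}$ with its forgetful map to $\Mbar_{g,\ell}$, compute $H_g(\mu)$ as a pushforward of the fundamental class, and identify the resulting tautological class with $\prod_i\frac{\mu_i^{\mu_i}}{\mu_i!}\cdot\frac{\sum_{j}(-1)^j\lambda_j}{\prod_i(1-\mu_i\psi_i)}$ via an excess-intersection analysis of the branch divisors along the boundary; whichever of the two is cleaner in exposition is the one I would write up in full.
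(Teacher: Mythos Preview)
The paper does not prove this theorem. Immediately after stating it, the author writes in a Remark: ``It is not our purpose to give a proof of the ELSV formula in these lectures. There are excellent articles about this remarkable formula. We refer to \cite{Liu2, OP1}.'' So there is no paper's own proof to compare against; the ELSV formula is quoted as input from the literature.

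Your outline follows the Graber--Vakil localization proof, which is one of the standard approaches and is essentially what the cited references \cite{Liu2, OP1} present. The structure you give is correct: realize $H_g(\mu)$ as the degree of the branch morphism on the space of relative stable maps (this is the Fantechi--Pandharipande construction), localize with respect to the $\bC^*$-action on $\bP^1$, and identify the contribution of the distinguished locus where all genus is contracted over $0$.

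One point to sharpen: the non-distinguished loci do not contribute because of a miraculous \emph{cancellation} among rational functions, nor because of Hodge-integral vanishing theorems applied post hoc. The actual mechanism in Graber--Vakil is a choice of equivariant lift of the point class in $\bP^r=\Sym^r\bP^1$: one takes the $\bC^*$-fixed point $r\cdot[0]$, i.e., all $r$ simple branch points are sent to $0$. With this lift, the equivariant class $\mathrm{br}^*(\text{pt})$ restricts to zero on every fixed locus that has any simple branching living in the rubber over $\infty$, so those loci contribute zero \emph{individually}. The only surviving locus is the distinguished one. This is cleaner than what you describe, and it removes the ``main obstacle'' you flagged. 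Your alternative route through admissible covers and the forgetful map to $\Mbar_{g,\ell}$ is closer to the original argument in \cite{ELSV} and also works, though the excess-intersection analysis along the boundary there is genuinely delicate.
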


\begin{rem}
It is not our purpose to give a proof of the
ELSV formula in these lectures. There are excellent
articles  about this remarkable 
formula. We refer to \cite{Liu2, OP1}.
\end{rem}

To explore the mirror partner to simple
Hurwitz numbers, we wish to compute the 
Laplace transform of the ELSV formula. 
Let us recall Stirling's formula
\begin{equation}
\label{eq:stir}
\frac{k ^{k+n}}{k!} e^{-k}\sim
\frac{1}{\sqrt{2\pi}} \; k ^{n-\half},\qquad k\gg0
\end{equation}
for a fixed $n$.
\begin{Def}
For a complex parameter $w$ with $Re(w)>0$,
we define
\begin{equation}
\label{eq:xiw}
\xi_n(w) = \sum_{k=1} ^\infty 
\frac{k ^{k+n}}{k!} e^{-k} e^{-kw}.
\end{equation}
\end{Def}
Because of Stirling's formula (\ref{eq:stir}), we expect 
that asymptotically near $w\sim 0$, 
$$
\xi_n(w) \sim \int_0 ^\infty \frac{1}{\sqrt{2\pi}}
 x^{n-\half}e^{-xw}dx.
$$
To illustrate our strategy of computing the 
Laplace transform, let us first compute
$$
f_n(w) = \int_0 ^\infty x^n e^{-xw}dx.
$$
We notice that 
\be
\label{diffrec}
-\frac{d}{dw}f_n(w) = f_{n+1}(w). 
\ee
Therefore, if we know $f_0(w)$, then we can
calculate all $f_n(w)$ for $n>0$. 
Of course we have
$$
f_0(w) =\frac{1}{w}.
$$
Therefore, we immediately conclude that
\begin{equation}
\label{eq:fn}
f_n(w) = \frac{\Gamma(n+1)}{w^{n+1}},
\end{equation}
which satisfies the initial condition and the
differential recursion formula \eqref{diffrec}.
The important fact in complex analysis is that
when we derive a formula like (\ref{eq:fn}),
it holds for an arbitrary $n$, not necessarily a positive integer. In particular,
we have
$$
\xi_n(w) \sim \int_0 ^\infty \frac{1}{\sqrt{2\pi}}
 x^{n-\half}e^{-xw}dx = 
 \frac{\Gamma(n+\half)}{\sqrt{2\pi} \;w^{n+\half}}.
$$
From this asymptotic expression, we learn that
$\xi_n$ has an expansion in
$w^{-\half}$. Thus to identify the domain of 
holomorphy, we wish to find a natural coordinate
that behaves like $w^{-\half}$.

Note that for every $n>0$, the  summation in the definition 
of $\xi_n(w)$ in (\ref{eq:xiw}) can be
taken from $k=0$ to $\infty$. For $n=0$, the $k=0$
term contributes $0^0=1$ in the summation. So let us define
\begin{equation}
\label{eq:t}
t-1 = \xi_0(w) = \sum_{k=1} ^\infty
\frac{k^k}{k!}e^{-k}e^{-kw}.
\end{equation}
Then the computation of the Laplace transform
$\xi_n(w)$ is reduced to finding the 
inverse function $w = w(t)$ of (\ref{eq:t}),
because all we need after identifying the inverse is
to differentiate $\xi_0(w)$ $n$-times. 

Here we utilize the \emph{Lagrange Inversion Formula}.
\begin{thm}[The Lagrange Inversion Formula]
\label{thm:L}
Let $f(y)$ be a holomorphic function 
defined near $y=0$ such that $f(0)\ne 0$. 
Then the inverse of the function 
$$
x= \frac{y}{f(y)}
$$
is given by
$$
y=\sum_{k=1} ^\infty
\left[
\frac{d^{k-1}}{dy^{k-1}}(f(y))^k
\right]_{y=0}
\frac{x^k}{k!}.
$$
\end{thm}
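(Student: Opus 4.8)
The plan is to derive the coefficient formula directly from Cauchy's integral formula, using a single change of variables. First I would record the setup: since $f$ is holomorphic near $y=0$ and $f(0)\neq 0$, the function $x=y/f(y)$ is holomorphic in a neighborhood of $0$ with $x(0)=0$ and $x'(0)=1/f(0)\neq 0$, so by the inverse function theorem it has a holomorphic local inverse $y=y(x)$ with $y(0)=0$, convergent on some disc $|x|<\rho$. Writing $y(x)=\sum_{k\ge 1}a_k x^k$, the theorem is precisely the assertion that $a_k=\frac{1}{k!}\bigl[\tfrac{d^{k-1}}{dy^{k-1}}f(y)^k\bigr]_{y=0}$.

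Next I would extract $a_k$ by the Cauchy coefficient formula $a_k=\frac{1}{2\pi i}\oint \frac{y(x)}{x^{k+1}}\,dx$ over a small circle around $x=0$, and then integrate by parts; the boundary term vanishes because the contour is closed, giving $a_k=\frac{1}{2\pi i\, k}\oint \frac{y'(x)}{x^{k}}\,dx$. Now I would substitute $x=y/f(y)$. For $\varepsilon>0$ small this map sends the circle $|y|=\varepsilon$ to a simple closed curve winding once around $x=0$; moreover $y'(x)\,dx=dy$ under the substitution and $x^{-k}=f(y)^k y^{-k}$, so the integral becomes $a_k=\frac{1}{2\pi i\, k}\oint_{|y|=\varepsilon} \frac{f(y)^k}{y^{k}}\,dy$.

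The last step is to recognize this as a residue: $\frac{1}{2\pi i}\oint_{|y|=\varepsilon}\frac{f(y)^k}{y^{k}}\,dy$ is the coefficient of $y^{k-1}$ in the Taylor expansion of $f(y)^k$, namely $\frac{1}{(k-1)!}\bigl[\tfrac{d^{k-1}}{dy^{k-1}}f(y)^k\bigr]_{y=0}$. Multiplying by $1/k$ and using $\frac{1}{k(k-1)!}=\frac{1}{k!}$ yields exactly the claimed value of $a_k$, and since the series $\sum a_k x^k$ is the Taylor series of the genuine holomorphic function $y(x)$ it has positive radius of convergence, completing the proof.

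The one point that genuinely needs care — and which I expect to be the only real obstacle — is the change of variables: verifying that for $\varepsilon$ small enough the loop $y\mapsto y/f(y)$, $|y|=\varepsilon$, is a simple closed contour enclosing the origin exactly once, so that integrals computed as residues in the $x$-plane transport faithfully to the $y$-plane. If one wishes to avoid even this, I would note as a remark that the identity is, coefficient by coefficient, a polynomial identity in the Taylor coefficients of $f$, so it suffices to check it for convergent $f$ (done above) or to rerun the same residue manipulation entirely within the field of formal Laurent series $\bC((y))$; but I would present the analytic argument as the main one, as it fits the complex-analytic spirit of these lectures.
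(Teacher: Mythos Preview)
Your argument is correct and complete; the only delicate point you flag (that the image of $|y|=\varepsilon$ under $y\mapsto y/f(y)$ winds once around $0$) is indeed harmless, since $x'(0)=1/f(0)\neq 0$ makes the map a local biholomorphism.

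Your route differs from the paper's. The paper first proves a more general inversion formula (its equation~\eqref{eq:LIT}) for an arbitrary holomorphic $x=F(y)$ with $F'(b)\ne 0$: it writes $y-b=\int_b^y 1\,ds$, replaces the integrand by a Cauchy integral $\frac{1}{2\pi i}\oint\frac{F'(s)\,dt}{F(t)-F(s)}$, expands this kernel as a geometric series in $\frac{F(s)-a}{F(t)-a}$, and then integrates term by term; the statement you were asked to prove is then read off as a corollary by taking $F(y)=y/f(y)$, $a=b=0$. Your approach instead fixes $k$, extracts the single coefficient $a_k$ by Cauchy, integrates by parts once, and changes variables $x\leftrightarrow y$ in a single stroke. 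What this buys you is brevity and a very transparent dependence on the specific shape $x=y/f(y)$ (the factor $f(y)^k/y^k$ drops out immediately); what the paper's longer computation buys is the general formula~\eqref{eq:LIT}, valid for any invertible $F$, from which several variants can be specialized.
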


We give a proof of this formula in Appendix.
For our purpose, let us consider 
the case $f(y) = e^{y-1}$. The function
\begin{equation}
\label{eq:Lambert}
x = y e^{1-y}
\end{equation}
is called the \textbf{Lambert function}.

\textbf{This is our spectral curve, the hidden curve for Hurwitz numbers!}
The difficulty is in the calculation of \eqref{eq:xiw}. Since we do not know
how to calculate it, we just introduce the symbol $y$ for \eqref{yHur}
below, \emph{pretending} that we know what it is. Then we
can calculate \emph{everything} in terms of what we do not know, i.e.,
\eqref{yHur}. Indeed,
the Lagrange Inversion Formula 
immediately tells us that
its inverse function is given by
\be
\label{yHur}
y= \sum_{k=1} ^\infty \frac{k^{k-1}}{k!}e^{-k}
x^k.
\ee
So if we substitute 
\begin{equation}
\label{eq:x}
x = e^{-w},
\end{equation}
then we have
\begin{equation}
\label{eq:y}
y=\sum_{k=1} ^\infty \frac{k^{k-1}}{k!}e^{-k}
e^{-kw} = \xi_{-1}(w).
\end{equation}
The differential of the Lambert function gives
$$
dx = (1-y)e^{1-y} dy.
$$
Therefore, we have
\begin{equation}
\label{eq:wxy}
-\frac{d}{dw}=x\frac{d}{dx}=
\frac{y}{1-y}\frac{d}{dy}.
\end{equation}
Since 
$$
t-1 = \xi_0(w)=-\frac{d}{dw}\xi_{-1}(w) = 
\frac{y}{1-y},
$$
we conclude that
\begin{equation}
\label{eq:yint}
y=\frac{t-1}{t}.
\end{equation}
As a consequence, we complete the calculation:
\begin{equation}
\label{eq:wxyt}
-\frac{d}{dw}=x\frac{d}{dx}=
\frac{y}{1-y}\frac{d}{dy}
= t^2(t-1)\frac{d}{dt}.
\end{equation}
We also obtain a formula for $w$ in terms of $t$,
since $e^{-w}=ye^{1-y}$.
\begin{equation}
\label{eq:wint}
w=-\frac{1}{t}-\log\left(1-\frac{1}{t}\right)
=\sum_{m=2} ^\infty \frac{1}{m}\;\frac{1}{t^m}.
\end{equation}
Notice that near $w=0$, we have $t\sim \sqrt{2w}$,
as we wished!
Now we can calculate  $\xi_n(w)$ in terms of 
$t$ for every $n\ge 0$. 

\begin{Def}
As a function in $t$, we denote 
\begin{equation}
\label{eq:xihat}
\hxi_n(t) = \xi_n(w(t)).
\end{equation}
\end{Def}

\begin{thm}[Polynomiality]
For every $n\ge 0$, 
$\hxi_n(t)$ is a \textbf{polynomial} in $t$ of degree
$2n+1$. For $n>0$ it has an expansion 
\begin{equation}
\label{eq:hxi-expansion}
\hxi_n(t)=
(2n-1)!! t^{2n+1} - \frac{(2n+1)!!}{3}\; t^{2n}+\cdots
+a_n t^{n+2} + (-1)^n n! \;t^{n+1},
\end{equation}
where $a_n$ is defined by
$$
a_{n} =-\big[(n+1)a_{n-1} +(-1)^n n!\big]
$$
and is identified as the sequence A001705 or A081047 of the
\emph{On-Line Encyclopedia of Integer Sequences}.
\end{thm}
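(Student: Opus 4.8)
The plan is to set up a differential recursion for $\hxi_n(t)$ that mirrors the one for the $f_n(w)$ in \eqref{diffrec}, and then read off the polynomiality and the leading/trailing coefficients by induction on $n$. The starting point is formula \eqref{eq:wxyt}, which says that the operator $-d/dw$, acting on functions of $w$, becomes the polynomial vector field $t^2(t-1)\,d/dt$ in the $t$-coordinate. Since by definition $\xi_{n+1}(w) = -\tfrac{d}{dw}\xi_n(w)$ (this is the analogue of \eqref{diffrec} for the $\xi$'s, visible from \eqref{eq:xiw} because differentiating $e^{-kw}$ in $w$ pulls down a factor $-k$ and raises the index $n$ by one), we obtain the clean recursion
\begin{equation}
\label{eq:xihatrec}
\hxi_{n+1}(t) = t^2(t-1)\,\frac{d}{dt}\,\hxi_n(t).
\end{equation}
The base cases are $\hxi_{-1}(t) = y = (t-1)/t$ from \eqref{eq:yint}, and $\hxi_0(t) = t-1$ from \eqref{eq:t}; so $\hxi_0$ is already a polynomial of degree $1$, in agreement with the claimed degree $2n+1$ at $n=0$.

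First I would prove the polynomiality and the degree statement. If $\hxi_n(t)$ is a polynomial of degree $d$ with leading coefficient $c$, then $\tfrac{d}{dt}\hxi_n$ has degree $d-1$ and leading coefficient $cd$, and multiplying by $t^2(t-1)$ — whose top term is $t^3$ — yields a polynomial of degree $d+2$ with leading coefficient $cd$. Starting from $d=1$ at $n=0$, induction gives $\deg\hxi_n = 2n+1$ for all $n\ge 0$, and the leading coefficient satisfies $c_{n+1} = c_n\cdot(2n+1)$, with $c_0 = 1$; hence $c_n = (2n-1)!!$, matching the top term $(2n-1)!!\,t^{2n+1}$ in \eqref{eq:hxi-expansion}. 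For the subleading term, write $\hxi_n(t) = (2n-1)!!\,t^{2n+1} + b_n t^{2n} + \cdots$; applying \eqref{eq:xihatrec} and tracking the coefficient of $t^{2n+2}$ in $t^2(t-1)\tfrac{d}{dt}\hxi_n$ gives a first-order linear recursion for $b_n$ whose solution (with the correct value at small $n$) is $b_n = -\tfrac{(2n+1)!!}{3}$; this is the second term of \eqref{eq:hxi-expansion}.

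Next I would handle the lowest-order terms. Observe from \eqref{eq:xihatrec} that $t^2(t-1)$ divides $\hxi_{n+1}(t)$ for every $n\ge 0$; in fact $t^2$ divides $\tfrac{d}{dt}\hxi_n$ only after one application but a short induction on the $t$-adic valuation shows that $t^{n+1}$ divides $\hxi_n(t)$, so the lowest term of $\hxi_n$ is a constant multiple of $t^{n+1}$. To pin down that constant, set $\hxi_n(t) = \cdots + \gamma_n t^{n+1}$ and extract the coefficient of $t^{n+2}$ on the right of \eqref{eq:xihatrec}: the vector field $t^2(t-1)\tfrac{d}{dt}$ sends $\gamma_n t^{n+1}$ to $\gamma_n(n+1)t^{n+2} - \gamma_n(n+1)t^{n+3}$ plus higher contributions from the $t^{n+2}$-coefficient $a_n$ of $\hxi_n$, which contributes $-a_n t^{n+2}$. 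Comparing coefficients of $t^{n+2}$ yields $\gamma_{n+1} = -\bigl[(n+1)\gamma_n\bigr]$ at the top and, after keeping the $a_n$ term, exactly the stated recursion $a_n = -[(n+1)a_{n-1} + (-1)^n n!]$ for the next-to-lowest coefficient, together with $\gamma_n = (-1)^n n!$ by solving $\gamma_{n+1} = -(n+1)\gamma_n$ with $\gamma_0 = 1$ (consistent with $\hxi_0 = t - 1$, so $\gamma_0 = -1$; one must be careful with the index convention, and I would fix it against $\hxi_1$ and $\hxi_2$ computed directly from \eqref{eq:xihatrec}). Finally, the identification of $\{a_n\}$ with OEIS A001705 / A081047 follows by checking that the generating-function or closed-form description of that sequence satisfies the same recursion and initial condition; this is a routine verification against the OEIS data.

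The main obstacle I anticipate is purely bookkeeping: getting the index conventions and signs exactly right, since $\hxi_0 = t-1$ has a $-1$ constant term whereas the expansion \eqref{eq:hxi-expansion} is only asserted for $n>0$, and the vector field $t^2(t-1)\tfrac{d}{dt}$ produces cross-terms that shift several coefficients at once. The cleanest route is probably to establish \eqref{eq:xihatrec} once and for all, verify $\hxi_1$ and $\hxi_2$ by hand to anchor the induction, and then let the single recursion \eqref{eq:xihatrec} drive every claim — degree, leading coefficient, divisibility by $t^{n+1}$, the subleading coefficient, and the recursion for $a_n$ — so that no separate combinatorial argument is needed.
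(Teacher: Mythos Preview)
Your approach is exactly the paper's: the entire proof there is the single displayed recursion $\hxi_n(t) = t^2(t-1)\,\tfrac{d}{dt}\,\hxi_{n-1}(t) = \bigl(t^2(t-1)\tfrac{d}{dt}\bigr)^n(t-1)$, with everything else declared ``a straightforward calculation.'' You have simply written out that calculation, so there is nothing to compare.

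One small bookkeeping slip worth flagging (which you already anticipate): in your lowest-order paragraph the image of $\gamma_n t^{n+1}$ under $t^2(t-1)\tfrac{d}{dt}$ is $(n+1)\gamma_n t^{n+3} - (n+1)\gamma_n t^{n+2}$, not the other way around, and the $a_n t^{n+2}$ term contributes only at order $t^{n+3}$ and above, not at $t^{n+2}$. So $\gamma_{n+1} = -(n+1)\gamma_n$ comes cleanly from the $t^{n+2}$ coefficient alone, while the recursion for $a_{n+1}$ comes from matching the $t^{n+3}$ coefficient, which receives $(n+1)\gamma_n - (n+2)a_n$; reindexing gives the stated formula. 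Also note that the divisibility $t^{n+1}\mid \hxi_n$ and the expansion \eqref{eq:hxi-expansion} only hold for $n\ge 1$ (since $\hxi_0 = t-1$ has a nonzero constant term), so your induction for the bottom terms should be anchored at $n=1$, where $\hxi_1 = t^3 - t^2$ gives $\gamma_1 = -1$ and $a_1 = 1$.
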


\begin{proof}
It is a straightforward calculation of
$$
\hxi_n(t) = t^2(t-1)\frac{d}{dt}\hxi_{n-1}(t)
=\left(t^2(t-1)\frac{d}{dt}\right)^n (t-1).
$$
\end{proof}

\begin{rem} J.~Zhow informed the author that all other coefficients of
$\hxi_n(t)$
 had been identified.

\end{rem}

\begin{thm}[Laplace transform of simple Hurwitz
numbers]
The Laplace transform of simple Hurwitz numbers
is given by
\begin{equation}
\begin{aligned}
\label{eq:LTofHgmu}
\mathbf{H}_{g,\ell}(t) = \mathbf{H}_{g,\ell}(t_1,t_2,\dots,t_\ell) &=
\sum_{\mu\in\bZ_+ ^\ell}
H_g(\mu) e^{-|\mu|}e^{-(\mu_1w_1+\cdots+\mu_\ell
w_\ell)}
\\
&=
\sum_{n_1,\dots,n_\ell \ge 0}
\sum_{j=0} ^g (-1)^j
\la \tau_{n_1}\cdots \tau_{n_\ell}\lambda_j\ra_{g,\ell}
\prod_{i=1} ^\ell \hxi_{n_i}(t_i).
\end{aligned}
\end{equation}
This is a polynomial of degree
$3(2g-2+\ell)$. Its highest degree terms form a
homogeneous polynomial
\begin{equation}
\label{eq:Fgltop}
\mathbf{H}_{g,\ell} ^{\text{top}}(t)
=
\sum_{n_1+\cdots+n_\ell =3g-3+\ell}
\la \tau_{n_1}\cdots \tau_{n_\ell}\ra_{g,\ell}
\prod_{i=1} ^\ell (2n_i-1)!!\;t_i ^{2n_i+1},
\end{equation}
and the lowest degree terms also form  a
homogeneous polynomial
\begin{equation}
\label{Fgllow}
\mathbf{H}_{g,\ell} ^{\text{lowest}}(t) 
=
\sum_{n_1+\cdots+n_\ell =2g-3+\ell}
(-1)^{3g-3+\ell}
\la \tau_{n_1}\cdots \tau_{n_\ell}\lambda_g\ra_{g,\ell}
\prod_{i=1} ^\ell n_i !\; t_i ^{n_i+1}.
\end{equation}
\end{thm}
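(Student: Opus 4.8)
The plan is to obtain the formula and both of its extremal homogeneous parts directly from the ELSV formula \eqref{eq:ELSV} by a term-by-term Laplace transform, feeding in the explicit shape of $\hxi_n$ established in the preceding polynomiality theorem. First I would substitute the second line of \eqref{eq:ELSV} into the defining series \eqref{eq:LTofHgmu} for $\mathbf{H}_{g,\ell}$. The sum over the intersection numbers $\la\tau_{n_1}\cdots\tau_{n_\ell}\lambda_j\ra_{g,\ell}$ is finite, since it vanishes unless $n_1+\cdots+n_\ell+j=3g-3+\ell$, so it may be pulled outside the sum over $\mu\in\bZ_+^\ell$; the only analytic point is that, for $Re(w_i)>0$, the inner series $\sum_{\mu_i\ge 1}\frac{\mu_i^{\mu_i+n_i}}{\mu_i!}e^{-\mu_i}e^{-\mu_i w_i}=\xi_{n_i}(w_i)$, in the notation \eqref{eq:xiw}, converges, which is immediate from Stirling's estimate \eqref{eq:stir} (the tail is dominated by $\sum_k k^{\,n_i-\half}e^{-k\,Re(w_i)}$). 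By the definition \eqref{eq:xihat} together with the change of variables \eqref{eq:t}, $\xi_{n_i}(w_i)=\hxi_{n_i}(t_i)$, and this yields the second equality in \eqref{eq:LTofHgmu}.

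Next, polynomiality and the degree bound come for free: by the polynomiality theorem above each $\hxi_n$ is a polynomial of degree $2n+1$, and only finitely many terms of the double sum are nonzero, so $\mathbf{H}_{g,\ell}$ is a polynomial in $(t_1,\dots,t_\ell)$. For a surviving term $n_1+\cdots+n_\ell=3g-3+\ell-j\le 3g-3+\ell$, whence $\deg\prod_i\hxi_{n_i}=\sum_i(2n_i+1)\le 2(3g-3+\ell)+\ell=3(2g-2+\ell)$, with equality precisely when $j=0$ and every factor $\hxi_{n_i}$ contributes its leading monomial.

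The top homogeneous part is then read off immediately: only $j=0$ contributes, where $\lambda_0=1$ so the intersection number is the pure $\psi$-number $\la\tau_{n_1}\cdots\tau_{n_\ell}\ra_{g,\ell}$, and the leading term of $\hxi_n$ is $(2n-1)!!\,t^{2n+1}$ by \eqref{eq:hxi-expansion} (with the convention $(-1)!!=1$ covering the $n=0$ factor $\hxi_0=t-1$). Collecting these gives exactly \eqref{eq:Fgltop}.

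The lowest part is the delicate one, and I expect it to be the main obstacle. Dually, the smallest degrees arise when $j$ is maximal, $j=g$, which forces $n_1+\cdots+n_\ell=2g-3+\ell$ and brings in the $\lambda_g$-classes; for $n\ge 1$ the trailing monomial of $\hxi_n$ is $(-1)^n n!\,t^{n+1}$ by \eqref{eq:hxi-expansion}. Multiplying these bottom terms together against the ELSV sign $(-1)^{j}=(-1)^g$ produces the overall sign $(-1)^{g+\sum n_i}=(-1)^{3g-3+\ell}$ and the shape $\prod_i n_i!\,t_i^{n_i+1}$, which is the content of \eqref{Fgllow}. The genuine subtlety is that $\hxi_0(t)=t-1$ has a nonzero \emph{constant} term, so the closed form $(-1)^n n!\,t^{n+1}$ breaks at $n=0$, and a factor with some $n_i=0$ can a priori drop the total degree below $2g-3+2\ell$; one must therefore check that \eqref{Fgllow} is exactly the homogeneous component of $\mathbf{H}_{g,\ell}$ supported on monomials divisible by $t_1\cdots t_\ell$, using the $\lambda_g$-formula to control which compositions $(n_1,\dots,n_\ell)$ actually occur and how the lower-degree pieces from $\hxi_0$-factors organize themselves. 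Getting this bookkeeping precisely right is the real work; the rest is mechanical.
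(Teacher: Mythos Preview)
Your derivation of the second equality in \eqref{eq:LTofHgmu} and of the top homogeneous part \eqref{eq:Fgltop} is exactly the intended argument: substitute ELSV, factor the $\mu$-sum into a product of the $\xi_{n_i}(w_i)$ of \eqref{eq:xiw}, and read off the leading term of each $\hxi_{n_i}$ from the polynomiality theorem. The paper does not spell this out, so there is nothing to compare; your write-up is the natural proof.

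Your worry about the bottom part is well placed, but your proposed resolution is off. First, invoking the $\lambda_g$-formula here would be circular: the whole point of isolating \eqref{Fgllow} is to feed it into the recursion \eqref{eq:main} and \emph{derive} the $\lambda_g$-formula in Proposition~\ref{prop:lam-g}. Second, the literal statement ``lowest degree terms'' is in fact imprecise: already for $(g,\ell)=(1,1)$ one computes $\mathbf H_{1,1}(t)=\tfrac{1}{24}(t^3-t^2-t+1)$, which has a nonzero constant term, while \eqref{Fgllow} predicts $-t/24$. The extra low-degree pieces come precisely from the $-1$ in $\hxi_0=t-1$, exactly as you suspected, and they do \emph{not} cancel.

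What is actually true---and what Proposition~\ref{prop:lam-g} uses---is that \eqref{Fgllow} gives the coefficient of every monomial $\prod_i t_i^{\,n_i+1}$ with $n_i\ge 0$ and $\sum_i n_i=2g-3+\ell$, and that no monomial divisible by $t_1\cdots t_\ell$ of smaller total degree occurs. This needs no information about intersection numbers, only the degree shape of $\hxi_m$: for every $a\ge 1$ the coefficient of $t^a$ in $\hxi_m(t)$ vanishes unless $m\le a-1$ (for $m\ge 1$ because $\hxi_m$ is supported in degrees $[m+1,2m+1]$, and for $m=0$ because $\hxi_0=t-1$). Hence a term indexed by $(j;m_1,\dots,m_\ell)$ can contribute to the coefficient of $\prod_i t_i^{\,a_i}$ with all $a_i\ge 1$ only if $m_i\le a_i-1$ for every $i$, so
\[
3g-3+\ell-j=\sum_i m_i\le \sum_i a_i-\ell,
\]
i.e.\ $\sum_i a_i\ge 3g-3+2\ell-j\ge 2g-3+2\ell$, with equality forcing $j=g$ and $m_i=a_i-1=n_i$. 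Picking off the bottom coefficient $(-1)^{n_i}n_i!$ of each $\hxi_{n_i}$ (valid also at $n_i=0$, where it is the coefficient of $t$, not the constant) then gives exactly \eqref{Fgllow}. So replace your appeal to the $\lambda_g$-formula by this two-line degree count, and phrase the conclusion as ``the homogeneous part of degree $2g-3+2\ell$ supported on monomials divisible by $t_1\cdots t_\ell$'' rather than ``the lowest degree part''.
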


\begin{rem}
We note that there is no a priori reason for 
the Laplace transform of $H_g(\mu)$ to be
a polynomial. Because it is a polynomial,
we obtain a polynomial generating function 
of linear Hodge integrals
$\la \tau_{n_1}\cdots \tau_{n_\ell}\lambda_j\ra_{g,\ell}$.
We utilize this polynomiality in the concluding subsection below.
\end{rem}

\begin{rem}
The motivation for the author to work on
Catalan numbers  \cite{DMSS,OM4} was to find an analytically
simpler example of enumeration problem for which a similar
polynomiality holds. The coordinate change for the Catalan case was 
obtained from looking for an analogy of \eqref{eq:yint}.
\end{rem}

\begin{rem}
The existence of the polynomials $\hxi_n(t)$
in (\ref{eq:LTofHgmu}) is significant,
because it reflects the ELSV formula (\ref{eq:ELSV}).
Indeed, Eynard  predicts that 
this is the general structure of the Eynard-Orantin
formalism.
\end{rem}

\subsection{Two quantum curves for the Lambert curve}
\label{FugueQC}

How about the quantum curve associated with the Lambert curve?
Since $x=y e^{1-y}$ does not give an algebraic curve,
the argument we used for the case of Catalan generating function,
\eqref{xz} and \eqref{xzdiff},
does not apply here. 
Yet the counterpart of the \emph{quantum curve} \eqref{qxz}
exists for Hurwitz numbers. Indeed, we discover \emph{two}
differential equations that recover the Lambert curve
via the semi-classical limit.

Our particular choice of the Lambert function \eqref{eq:Lambert} is for the purpose
to make the subsequent calculations less cumbersome,
by reducing the appearance of  powers of $e$. More traditional 
choice is
\be
\label{classicalLambert}
x = y e^{-y}.
\ee 
The difference is only in the constant multiplication $x\longmapsto ex$. 
For the calculation of quantum curves, we use the classical one.
This works better for quantum curves, because the differential operator we need
is $D:= x\frac{d}{dx}$, which is invariant under the $\bC^*$-action. 

\begin{thm}[\cite{BHLM,MSS, MS2015}]
As a straightforeward analogy of the formula \eqref{CatalanPsi} for 
Catalan numbers, let us define
\be
\label{HurwitzZ}
Z(t,\hbar):=
\exp\left(\sum_{g\ge0, n>0} \frac{1}{n!}\, 
\hbar ^{2g-2+n}\, \mathbf{H}_{g,n}(t,t,\dots,t)\right).
\ee
Then it satisfies the following  two equations, one is a partial 
differential equation, and
the other a difference-differential equation:
\begin{align}
\label{partial}
&\left(\frac{\hbar}{2} D^2 -\left(1+\frac{\hbar}{2}\right)D
-\hbar \,\frac{\partial}{\partial \hbar}\right)   Z\big(t(x),\hbar\big)= 0,
\\
\label{difference}
&
\left(
\hbar D - x e^{\hbar  D}
\right) Z\big(t(x),\hbar\big) = 0.
\end{align}
Here, $D = x\frac{d}{d x}$, and the variable 
$t$ in \eqref{HurwitzZ} is considered to be a function in $x$
by the relations
$$
y = \frac{t-1}{t} \quad \text{and}\quad x = y e^{-y}.
$$
\end{thm}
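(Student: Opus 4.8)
The plan is to derive both equations directly from the Laplace transform of the cut-and-join equation, together with the combinatorial definition of the partition function $Z(t,\hbar)$. The key point is that the generating function $\mathbf{H}_{g,n}$ is, after the change of variables, controlled entirely by the differential operator $D = x\tfrac{d}{dx} = t^2(t-1)\tfrac{d}{dt}$, which by \eqref{eq:wxyt} is exactly the operator dual to multiplication by $\mu_i$ under the Laplace transform $\mu \mapsto e^{-|\mu|}e^{-\mu w}$. So the first step is to rewrite the cut-and-join equation \eqref{eq:caj} in generating-function form: multiply by $e^{-|\mu|}\prod e^{-\mu_i w_i}$ and sum over $\mu \in \bZ_+^\ell$ for all $g$ and $\ell$, assembling the result into the single exponential generating function $Z(t,\hbar)$. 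The factor $r(g,\mu) = 2g-2+|\mu|+\ell$ on the left becomes the Euler operator $(2g-2+n)$ acting on $\hbar^{2g-2+n}$, i.e.\ $\hbar\,\partial/\partial\hbar$ applied to $Z$, plus a contribution $|\mu|$ which is $\sum_i D_i$; the join and cut terms on the right, each carrying weights $(\mu_i+\mu_j)$ or $\alpha\beta$ with $\alpha+\beta=\mu_i$, become first- and second-order applications of $D$ acting on the $i$-th variable, and the genus-splitting sum $H_{g_1}H_{g_2}$ becomes the quadratic term in $Z$ that exactly matches the $(\tfrac12 D^2 Z)$-type contribution coming from $\exp$. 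After the principal specialization $t_1=\cdots=t_n = t$ the partial differential equation \eqref{partial} should fall out; one must be careful to track the constant and linear-in-$\hbar$ corrections (the $(1+\hbar/2)D$ term), which come from the unstable $(g,n)=(0,1)$ and $(0,2)$ contributions $F_{0,1},F_{0,2}$ that sit outside the range $2g-2+n>0$ and must be fed in by hand.

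For the difference-differential equation \eqref{difference}, the plan is to recognize $\hbar D - x e^{\hbar D}$ as the quantization of the Lambert curve $x = y e^{-y}$ under the substitution $y \mapsto \hbar D$, $x \mapsto x$ — indeed the semi-classical limit $e^{-S_0'/\hbar}(\hbar D - x e^{\hbar D})e^{S_0'/\hbar} \to -S_0' \cdot(\hbar \to 0) $ recovers $y - x e^{y}$, i.e.\ the curve, with $y = -S_0'$. So one approach is to verify \eqref{difference} order by order in $\hbar$: writing $Z = \exp(\sum_{g,n}\tfrac{1}{n!}\hbar^{2g-2+n}\mathbf{H}_{g,n})$, the coefficient of each power of $\hbar$ in $(\hbar D - xe^{\hbar D})Z = 0$ is a recursion that must coincide with the Laplace-transformed cut-and-join recursion; the operator $e^{\hbar D}$ expands as $\sum_k \tfrac{\hbar^k}{k!}D^k$ and the combinatorial meaning of $D^k$ acting on the exponential reproduces exactly the sum over all ways of distributing $k$ "incoming edges" among the existing vertices, which is the ELSV/tree-counting content. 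Alternatively — and this is cleaner — one can use the principal-specialization identity relating $Z$ to the wave function $\xi_{-1}$: from \eqref{eq:y} and \eqref{eq:LTofHgmu} the $(0,1)$ part of $Z$ is governed by $y(x) = \xi_{-1}(w)$, which satisfies $y = x e^{y}$ tautologically, and the higher $(g,n)$ corrections are generated by the same operator $D$; so \eqref{difference} is the statement that the full $Z$ is annihilated by the operator obtained from the $(0,1)$ functional equation by the replacement $y \rightsquigarrow \hbar D$. Showing this genuinely requires the cut-and-join equation, since that is the only input that pins down the higher genera.

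The main obstacle I expect is the bookkeeping of the unstable terms and the quadratic (exponential-derived) terms when passing from \eqref{eq:caj} to the PDE for $Z$: the cut-and-join equation is stated for connected Hurwitz numbers, but the genus-splitting term $\sum H_{g_1}H_{g_2}$ together with the $\exp$ in the definition of $Z$ conspires to turn the nonlinear recursion into a linear PDE, and getting the precise coefficients (the $\tfrac{\hbar}{2}$ in front of $D^2$, the $1+\tfrac{\hbar}{2}$ in front of $D$) right demands care about whether one includes the diagonal $i=j$ terms, about the automorphism conventions in Definition~\ref{eq:Hgmu}, and about the seed data $F_{0,1} = -\tfrac12 z^2 + \log z$ and $F_{0,2}$. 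For the difference equation the analogous subtlety is that $e^{\hbar D}$ is an infinite-order operator, so one must argue that it acts well-definedly on $Z\big(t(x),\hbar\big)$ as a formal series in $\hbar$ — this is where the polynomiality of $\mathbf{H}_{g,n}$ in $t$ (hence the fact that only finitely many terms contribute at each order in $\hbar$) is essential, and it is the reason the theorem is formulated at the level of the formal generating function rather than an honest analytic function. Once the translation to generating functions is set up correctly, both \eqref{partial} and \eqref{difference} should reduce to the already-established Laplace transform of the cut-and-join equation, so the real work is entirely in the careful assembly of the generating function and the unstable corrections.
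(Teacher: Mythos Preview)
The paper does not actually prove this theorem: it is stated with attribution to \cite{BHLM,MSS,MS2015}, and the surrounding text only discusses the semi-classical limit of the two operators (recovering the Lambert curve), remarking that the full derivation and the connection to the KP $\tau$-function are to be found in \cite{MS2015}. So there is no in-paper proof to compare against.

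That said, your outline is essentially the strategy carried out in those references. In \cite{MS2015} the partition function $Z$ is first identified as a principal specialization of a KP $\tau$-function built from Schur functions, and the operator in \eqref{partial} is exactly the cut-and-join operator of Goulden--Jackson acting on that $\tau$-function; the passage from the quadratic (disconnected) cut-and-join PDE to a \emph{linear} PDE for $Z$ is precisely the mechanism you describe, with the exponential converting the genus-splitting sum into the $\tfrac{\hbar}{2}D^2$ term. The difference equation \eqref{difference} is then obtained from \eqref{partial} by a short operator manipulation (an integrating-factor/conjugation argument) rather than by an independent order-by-order verification, which is somewhat cleaner than the two alternatives you sketch.

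One point to sharpen: your semi-classical computation should conjugate by $e^{S_0/\hbar}$, not $e^{S_0'/\hbar}$, and the substitution is $\hbar D \mapsto y$ (not $-y$); the paper spells this out explicitly just after the theorem. Also, the coefficient bookkeeping you flag as the ``main obstacle'' is real but is handled in \cite{MS2015} not by tracking $F_{0,1}$ and $F_{0,2}$ directly in the Laplace transform, but by working with the Schur-function expansion of the $\tau$-function, where the unstable pieces are automatically incorporated. Your proposed route (direct Laplace transform of \eqref{eq:caj} with unstable terms fed in by hand) would also work, and is closer in spirit to the treatment in \cite{EMS,MZ} that the paper develops in the following subsection.
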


The semi-classical limit calculations can be applied to these partial differential
and difference-differential equations. The result is the same as
replacing 
$
\begin{cases}
\hbar D\longmapsto y\\
x\longmapsto x
\end{cases}
$
for \eqref{difference}, which recovers the Lambert curve. For the
first equation \eqref{partial}, we need to perform the actual
WKB analysis to obtain the semi-classical limit, as done in \cite{MS2015}.
The quantum curve
\eqref{difference} was also  independently obtained in \cite{Zhou4}.

An important aspect arising from \eqref{HurwitzZ} is the emergence of the
KP $\tau$-function right in this formula. Indeed, $Z(t,\hbar)$ is 
a principal specialization of the KP $\tau$-function. 
We refer to \cite{MS2015} for more detail.

We are now ready to 
compute the Laplace transform 
of the cut-and-join equation itself. The result turns out
to be a simple polynomial recursion formula. 
Here again there is no a priori reason for the result to be
a polynomial relation, because the cut-and-join
equation (\ref{eq:caj}) 
contains unstable geometries, and
they contribute non-polynomial terms after 
the Laplace transform.

\begin{rem}
We  remark here that the Laplace transform of
the cut-and-join equation is equivalent to 
the Eynard-Orantin topological recursion formula
\cite{EO1}
based on the Lambert curve 
(\ref{eq:Lambert}) as the spectral curve 
of the theory. 
This fact solves the Bouchard-Mari\~no 
conjecture \cite{BM} 
of Hurwitz numbers \cite{EMS, MZ}, and establishes
the Lambert curve as the remodeled $B$-model
corresponding to simple Hurwitz numbers
through mirror symmetry.
\end{rem}

The unexpected power \cite{MZ} of the topological recursion type
formula appearing in our context is the following.
\begin{enumerate}
\item It restricts to the top degree terms, and recovers 
the Dijkgraaf-Verlinde-Verlinde formula, or the 
Virasoro constraint condition, for the
$\psi$-class intersection numbers on $\Mbar_{g,\ell}$
\cite{W1991}.
\item It also restricts to the lowest degree terms,
and recovers the $\lambda_g$-conjecture of Faber that
was proved in \cite{FP1,FP2} in a totally different method.
Our proof is straightforward.
\end{enumerate}
In other words, we obtain a straightforward, simple
proofs of the Witten conjecture and Faber's 
$\lambda_g$-conjecture from the Laplace transform of
the cut-and-join equation.
We note that the Laplace transform contains the
information of the large $\mu$ asymptotics. Therefore,
our  proof \cite{MZ} of the Witten conjecture uses the 
same idea of Okounkov and 
Pandharipande \cite{OP1}, yet it is much simpler 
because we do not have to use any
 of the asymptotic analyses
of matrix integrals, Hurwitz numbers, and graph 
enumeration.

The proof of the $\lambda_g$-conjecture  using the
topological 
recursion is still somewhat mysterious. Here again 
the complicated combinatorics is  
wiped out and we have a transparent proof.

Let us now state the Laplace transform of the 
cut-and-join equation.

\begin{thm}[\cite{MZ}]
\label{thm:main}
The polynomial generating functions of the
linear Hodge integrals $\mathbf{H}_{g,\ell}(t)$ satisfy the
following topological recursion type formula
\begin{multline}
\label{eq:main}
\left(
2g-2+\ell +\sum_{i=1} ^\ell
\frac{1}{t_i}
D_i
\right)
\mathbf{H}_{g,\ell}(t_1,t_2,\dots,t_\ell)
\\
=
\sum_{i< j}
 \frac{  t_i ^2(t_j-1)D_i
    \mathbf{H}_{g,\ell-1}\big(t_{[\ell;\hat{j}]}\big)
    -
      t_j ^2(t_i-1)D_j
    \mathbf{H}_{g,\ell-1}\big(t_{[\ell;\hat{i}]}\big)}{t_i-t_j}
\\
+
\sum_{i=1} ^\ell
\left[
D_{u_1}D_{u_2}
\mathbf{H}_{g-1,\ell+1}\big(u_1,u_2,t_{[\ell;\hat{i}]}\big)
\right]_{u_1=u_2=t_i}
\\
+
\half
\sum_{i=1} ^\ell
\sum_{\substack{g_1+g_2 = g\\
J\sqcup K= [\ell;\hat{i}]}} ^{\rm{stable}}
D_i
\mathbf{H}_{g_1,|J|+1}(t_i,t_J)\cdot 
D_i
\mathbf{H}_{g_2,|K|+1}(t_i,t_K) ,
\end{multline}
where $D_i = t_i ^2 (t_i-1)\frac{\partial }{\partial t_i}$.
As before, $[\ell]=\{1,\dots,\ell\}$ is the index set,
and $[\ell;\hat{i}]$ is the index set obtained by 
deleting $i$ from $[\ell]$. 
The last summation is taken over all partitions
$g=g_1+g_2$  of the genus $g$ and disjoint union  decompositions
$J\sqcup K= [\ell;\hat{i}]$
satisfying the stability conditions
$2g_1-1+|J|>0$ and $2g_2-1+|K|>0$. For 
a subset $I\subset [\ell]$ we write $t_I = (t_i)_{i\in I}$. 
\end{thm}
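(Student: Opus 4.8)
\medskip
\noindent\textbf{Proof plan.}
The strategy is to apply the Laplace transform
\[
\mathcal{L}[\phi] \;=\; \sum_{\mu\in\bZ_+^\ell}\phi(\mu)\,e^{-|\mu|}\,e^{-(\mu_1 w_1+\cdots+\mu_\ell w_\ell)}
\]
termwise to the cut-and-join equation \eqref{eq:caj}, and to recognize the four resulting pieces as the four lines of \eqref{eq:main}. The basic dictionary is the operator identity $\mathcal{L}\circ(\text{multiplication by }\mu_i)=D_i\circ\mathcal{L}$, which follows from $-\partial/\partial w_i$ acting on $e^{-\mu_i w_i}$ combined with the change of variables \eqref{eq:wxyt}, namely $-\partial/\partial w_i = t_i^2(t_i-1)\partial/\partial t_i=D_i$. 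Applied to the left-hand side of \eqref{eq:caj}, where $r(g,\mu)=2g-2+\ell+|\mu|$, the constant part gives $(2g-2+\ell)\,\mathbf{H}_{g,\ell}$ and the $|\mu|=\sum_i\mu_i$ part gives a sum of first-order derivatives, which after the unstable-term reconciliation below matches $\bigl(2g-2+\ell+\sum_i\tfrac{1}{t_i}D_i\bigr)\mathbf{H}_{g,\ell}$. Everything then reduces to handling the two combinatorial operations on the right-hand side of \eqref{eq:caj}: the ``join'' sum over $\mu_i+\mu_j$ and the ``cut'' sum over $\alpha+\beta=\mu_i$.

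For the join term I would use the elementary splitting identity: with $x_k:=e^{-w_k}$,
\[
\sum_{\substack{a+b=m\\ a,b\ge 1}}e^{-a w_i-b w_j}\;=\;\frac{x_i^{m}x_j-x_i x_j^{m}}{x_i-x_j},\qquad m\ge 2 .
\]
Substituting $m=\mu_i+\mu_j$ and summing first over the spectator variables $\mu_k$ ($k\ne i,j$) turns $\mathcal{L}$ of the $(i,j)$-summand into a divided difference of two copies of $\mathbf{H}_{g,\ell-1}$, the merged slot being carried by $t_i$ in one term and by $t_j$ in the other; the prefactor $\mu_i+\mu_j=m$ is absorbed by one application of $D_i$, respectively $D_j$. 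Re-expressing $x_k$ through $t_k$ by \eqref{eq:x}, \eqref{eq:yint} and \eqref{eq:wint} and simplifying the rational function produces exactly the first line of the right-hand side of \eqref{eq:main}. The cut terms are handled by the complementary trick: for a coefficient array $a_{\alpha,\beta}$,
\[
\sum_{\mu_i\ge 2}\Bigl(\sum_{\alpha+\beta=\mu_i}\alpha\beta\,a_{\alpha,\beta}\Bigr)e^{-\mu_i}e^{-\mu_i w_i}
\;=\;\Bigl[D_{u_1}D_{u_2}\sum_{\alpha,\beta\ge 1}a_{\alpha,\beta}\,e^{-\alpha-\beta}\,e^{-\alpha u_1-\beta u_2}\Bigr]_{u_1=u_2=w_i},
\]
since setting $u_1=u_2=w_i$ collapses $e^{-\alpha u_1-\beta u_2}$ to $e^{-\mu_i w_i}$ while the two derivatives reproduce the weight $\alpha\beta$. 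Feeding $a_{\alpha,\beta}=\mathcal{L}\!\left[H_{g-1}(\mu(\hat i),\alpha,\beta)\right]$ yields the third line of \eqref{eq:main} (the $D_{u_1}D_{u_2}\mathbf{H}_{g-1,\ell+1}$ term), and feeding the convolution coming from the set-partition sum $I\sqcup J=\mu(\hat i)$, under which $\mathcal{L}$ factorizes into a product, yields the fourth, quadratic line after evaluation at $u_1=u_2=t_i$.

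The main obstacle, and the real content of the argument, is the treatment of the \emph{unstable} geometries hidden in \eqref{eq:caj}: the join can output a $(g,\ell-1)=(0,1)$ or $(0,2)$ factor, the connected cut can output a $(0,2)$ factor, and the disconnected cut can output $(0,1)$ factors. Their Laplace transforms are \emph{not} polynomials --- one has $\mathbf{H}_{0,1}(t)=\hxi_{-1}(t)=\tfrac{t-1}{t}$ (this is $y$), and $\mathbf{H}_{0,2}(t_1,t_2)$ is a logarithmic, Bergman-kernel type term --- which is precisely why \eqref{eq:main} carries the stability restrictions $2g_1-1+|J|>0$, $2g_2-1+|K|>0$ on the last sum and none on the others. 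I would therefore (i) compute $\mathbf{H}_{0,1}$, $\mathbf{H}_{0,2}$ and the relevant derivatives $D\mathbf{H}_{0,1}$, $D_1D_2\mathbf{H}_{0,2}$ explicitly in the $t$-coordinate; (ii) isolate in the join and cut sums exactly those summands in which one factor is unstable; and (iii) show that these non-polynomial contributions either cancel in pairs or collapse, via the two identities above applied in reverse, into the left-hand side, the discrepancy between the naive $\sum_i D_i$ and the stated $\sum_i\tfrac{1}{t_i}D_i$ being supplied precisely by the $(0,1)$ value $\tfrac{t_i-1}{t_i}$. Once the unstable terms are booked this way, what remains on the right is manifestly the stable recursion \eqref{eq:main}; polynomiality of every $\mathbf{H}_{g,\ell}$ with $2g-2+\ell>0$ then follows a posteriori by induction on $2g-2+\ell$ together with the Polynomiality Theorem for $\hxi_n$. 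The delicate step is step (iii): matching the unstable non-polynomial residue against the left-hand side requires the combinatorial identities to be tracked with their exact constants and boundary ($m=1$) terms, and is where essentially all the bookkeeping lives.
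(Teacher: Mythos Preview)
Your plan is correct and is exactly the approach the paper describes (and attributes to \cite{MZ}): Laplace-transform the cut-and-join equation \eqref{eq:caj} termwise using the dictionary $-\partial/\partial w_i=D_i$, identify the stable pieces with the four lines of \eqref{eq:main}, and check that the unstable $(0,1)$ and $(0,2)$ contributions cancel --- the paper itself gives no details beyond calling this a ``miraculous cancellation'' and referring to \cite{EMS,MZ}. One small caution on your heuristic for step (iii): the $(0,1)$ unstable piece in the disconnected cut contributes $D_i\mathbf{H}_{0,1}(t_i)\cdot D_i\mathbf{H}_{g,\ell}=(t_i-1)\,D_i\mathbf{H}_{g,\ell}$ (since $D_i y_i=\hxi_0(t_i)=t_i-1$), not $\tfrac{t_i-1}{t_i}D_i\mathbf{H}_{g,\ell}$, so the passage from the naive $\sum_i D_i$ to the stated $\sum_i\tfrac{1}{t_i}D_i$ genuinely requires the $(0,2)$ unstable terms as well --- this is consistent with your own acknowledgment that step (iii) is where all the bookkeeping lives.
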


The biggest difference between the cut-and-join
equation (\ref{eq:caj}) and 
the Laplace transformed formula 
(\ref{eq:main})
is the restriction to stable geometries in
the latter. In the case of the cut-and-join equation,
the cut case contains $g_1=0$ and $I=\emptyset$.
Then $H_{g_2}(J,\beta)$ has the same complexity
of $H_g(\mu)$. Thus the cut-and-join equation
is simply a relation among Hurwitz numbers, 
not a \emph{recursive} formula.

The new feature of our (\ref{eq:main})
is that it is a genuine recursion formula
about linear Hodge integrals. Indeed, we can 
re-write the formula as follows.

\begin{multline}
\label{eq:lhirecursion}
\sum_{n_{[\ell]}}\la \tau_{n_{[\ell]}}\Lam_g ^\vee(1)\ra_{g,\ell}
\left(
 (2g-2+\ell)\hxi_{n_{[\ell]}}(t_{[\ell]})
+
\sum_{i=1} ^\ell
\frac{1}{t_i}\;
 \hxi_{n_i+1}(t_i)\hxi_{[\ell;\hat{i}]}(
t_{[\ell;\hat{i}]})
\right)
\\
=
\sum_{i<j}
\sum_{m,n_{[\ell;\hat{i}\hat{j}]}}
\la \tau_m\tau_{n_{[\ell;\hat{i}\hat{j}]}}
\Lam_{g} ^\vee (1)\ra_{g,\ell-1}
\hxi_{n_{[\ell;\hat{i}\hat{j}]}}(t_{[\ell;\hat{i}\hat{j}]})
\frac{\hxi_{m+1}(t_i) \hxi_0(t_j) t_i ^2 
-
\hxi_{m+1}(t_j) \hxi_0(t_i) t_j ^2}{t_i-t_j}
\\
+
\half\sum_{i=1}^\ell
\sum_{n_{[\ell;\hat{i}]}}\sum_{a,b}
\Bigg(
\la \tau_a\tau_b\tau_{n_{[\ell;\hat{i}]}}\Lam_{g-1} ^\vee
(1)\ra_{g-1,\ell+1}\\
+
\sum_{\substack{g_1+g_2 = g\\
I\coprod J = [\ell;\hat{i}]}} ^{\rm{stable}}
\la \tau_a\tau_{n_{I}}\Lam_{g_1} ^\vee
(1)\ra_{g_1,|I|+1}
\la \tau_b\tau_{n_{J}}\Lam_{g_2} ^\vee
(1)\ra_{g_2,|J|+1}
\Bigg)
\hxi_{a+1}(t_i)\hxi_{b+1}(t_i)\hxi_{n_{[\ell;\hat{i}]}}(
t_{[\ell;\hat{i}]}),
\end{multline}
where $[\ell]=\{1,2\dots,\ell\}$ is the index set, 
and for a subset 
$I\subset [\ell]$, we denote
$$
t_I = (t_i)_{i\in I},\quad
	 n_I = \{\, n_i\,|\, i\in I\,\},\quad 
	\tau_{n_I} = \prod_{i\in I}\tau_{n_i},
	\quad
		\hxi_{n_I}(t_I) = \prod_{i\in I}
		\hxi_{n_i}(t_i).
$$
We also use a convenient notation
$$
\Lambda_{g} ^\vee (1) = 1-\lambda_1+\lambda_2-\cdots+
(-1)^g\lambda_g.
$$
It is now obvious that in (\ref{eq:lhirecursion}),
the complexity $2g-2+\ell$ is reduced 
exactly by $1$ on the right-hand side. Thus we
can compute linear Hodge integrals one by one
using this formula.

The Deligne-Mumford stack $\overline{\mathcal{M}}_{g,\ell}$
is defined as the moduli space of \emph{stable} curves satisfying the
stability condition
$2-2g-\ell <0$.  However, Hurwitz numbers 
are well defined for \emph{unstable} geometries
 $(g,\ell) = (0,1)$ and $(0,2)$. 
It is an elementary exercise (of tree counting, see \cite{Ltext}) to 
show that
\be
\label{01Hur}
H_0\big((d)\big) = \frac{d^{d-1}}{d!}.
\ee
We note that \textbf{this is the type $(0,1)$-Hurwitz number of degree $d$, 
and the new variable $y=y(x)$ of \eqref{yHur},
or the spectral curve, is  its
 generating
function.}

The ELSV formula remains true for unstable cases
by \emph{defining}
\begin{align}
\label{eq:01Hodge}
&\int_{\overline{\cM}_{0,1}} \frac{1}{1-k\psi}
=\frac{1}{k^2},\\
\label{eq:02Hodge}
&\int_{\overline{\cM}_{0,2}} 
\frac{1}{(1-\mu_1\psi_1)(1-\mu_2\psi_2)}
=\frac{1}{\mu_1+\mu_2}.
\end{align}
In terms of simple Hurwitz numbers,
we have
$$
H_0\big((\mu_1,\mu_2)\big)
=\frac{\mu_1^{\mu_1}}{\mu_1 !}\cdot
\frac{\mu_2^{\mu_2}}{\mu_2 !}\cdot
\frac{1}{\mu_1+\mu_2}.
$$

From these expressions we can actually compute
$\mathbf{H}_{0,1}(t)$ and $\mathbf{H}_{0,2}(t_1,t_2)$. Since these
computations are quite involved, we refer to 
\cite{EMS,MZ}. What happens often in mathematics
is what we call a \emph{miraculous cancellation}.
In our situation, when we honestly compute
all terms appearing in the Laplace transform
in the cut-and-join equation (\ref{eq:caj}),
somewhat miraculously, all non-polynomial
terms cancel out, and the rest becomes an
effective recursion formula (\ref{eq:lhirecursion}).

\subsection{New proofs of Witten-Kontsevich 
 and the $\lambda_g$ formulas}

Now let us move to proving the Witten conjecture
and the $\lambda_g$-formula using our 
recursion (\ref{eq:lhirecursion}). Although 
these important formulas have been proved a long
time ago, we present simpler proofs here just to 
illustrate the power of the topological recursion type formula.

The DVV formula for the
Virasoro constraint condition on the $\psi$-class
intersections  reads 
\begin{multline}
\label{eq:DVV}
\la \tau_{n_{[\ell]}}\ra _{g,\ell}=
\sum_{j\ge 2} 
\frac{(2n_1+2n_j-1)!!}{(2n_1+1)!! (2n_j -1)!!}
 \la \tau_{n_1+n_j-1}
 \tau_{n_{[\ell;\hat{1}\hat{j}]}}
\ra _{g,\ell-1}
\\
+
\frac{1}{2} \sum_{a+b=n_1-2}
\left(
\la \tau_a\tau_b\tau_{n_{[\ell;\hat{1}]}}
\ra _{g-1,\ell+1}
+
\sum_{\substack{g_1+g_2=g\\
J\sqcup K= [\ell;\hat{1}]}} ^{
\text{stable}}
\la \tau_a\tau_{n_J}\ra _{g_1,|J|+1}\cdot
\la \tau_b\tau_{n_K}\ra _{g_2,|K|+1}
\right)
\\
\times
\frac{(2a+1)!!(2b+1)!!}{(2n_1+1)!!}.
\end{multline}
Here $[\ell;\hat{1}\hat{j}]
=\{2,3,\dots,\hat{j},\dots,\ell\}$, and
for a subset $I\subset [\ell]$ we write
$$
n_I = (n_i)_{i\in I}\qquad{\text{and}}\qquad
\tau_{n_I}=\prod_{i\in I}\tau_{n_i}.
$$

\begin{prop}
\label{prop:WK}
The DVV formula {\rm(\ref{eq:DVV})} is exactly the 
relation among the top degree coefficients
of the recursion {\rm(\ref{eq:main})}.
\end{prop}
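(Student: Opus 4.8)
The plan is to extract the top-degree part of the polynomial identity \eqref{eq:main} and match it, term by term, with \eqref{eq:DVV}. Recall from \eqref{eq:LTofHgmu} that $\mathbf{H}_{g,\ell}(t)$ is a polynomial of degree $3(2g-2+\ell)$ whose degree-$3(2g-2+\ell)$ piece is the homogeneous polynomial $\mathbf{H}^{\text{top}}_{g,\ell}(t)$ of \eqref{eq:Fgltop}, encoding the pure $\psi$-intersection numbers $\la\tau_{n_1}\cdots\tau_{n_\ell}\ra_{g,\ell}$ with weights $\prod_i (2n_i-1)!!\,t_i^{2n_i+1}$. So first I would determine the degree of each operator and each term appearing in \eqref{eq:main}. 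The operator $D_i = t_i^2(t_i-1)\frac{\partial}{\partial t_i}$ raises polynomial degree by $2$; the prefactor operator on the left side, $2g-2+\ell+\sum_i \frac{1}{t_i}D_i$, contributes a degree-$(+1)$ piece via $\frac{1}{t_i}D_i$ acting on the top part, together with the lower-order scalar $2g-2+\ell$ which drops the degree and hence does not contribute to the top-degree comparison.

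Next I would carry out the degree bookkeeping on the right-hand side. The quadratic split term $\frac12\sum_i\sum^{\rm stable}_{g_1+g_2=g} D_i\mathbf{H}_{g_1,|J|+1}(t_i,t_J)\cdot D_i\mathbf{H}_{g_2,|K|+1}(t_i,t_K)$ has top degree $[3(2g_1-1+|J|)+2]+[3(2g_2-1+|K|)+2] = 3(2g-2+\ell)+1$, matching the left side's top piece; similarly the genus-reducing term $D_{u_1}D_{u_2}\mathbf{H}_{g-1,\ell+1}$ has top degree $3(2g-4+\ell+1)+4 = 3(2g-2+\ell)+1$; and for the $\sum_{i<j}$ term one checks that the rational expression $\frac{t_i^2(t_j-1)D_i\mathbf{H}_{g,\ell-1}(t_{[\ell;\hat j]}) - t_j^2(t_i-1)D_j\mathbf{H}_{g,\ell-1}(t_{[\ell;\hat i]})}{t_i-t_j}$ is genuinely polynomial (the numerator vanishes at $t_i=t_j$ because $\mathbf{H}_{g,\ell-1}$ is symmetric) of top degree $3+[3(2g-3+\ell)+2]-1 = 3(2g-2+\ell)+1$. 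Thus every term contributes at top degree, and restricting \eqref{eq:main} to its degree-$(3(2g-2+\ell)+1)$ part gives a clean identity among the homogeneous polynomials $\mathbf{H}^{\text{top}}$.

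Then I would substitute \eqref{eq:Fgltop} into this top-degree identity and read off the coefficient of a fixed monomial $\prod_i t_i^{2n_i+1}$ (after renaming: the left-hand monomial index $n_1$ on $t_1$, etc.), comparing with \eqref{eq:DVV}. The bookkeeping reduces to three elementary computations: (i) $\frac{1}{t_1}D_1\big[(2k-1)!!\,t_1^{2k+1}\big] = (2k-1)!!(2k+1)\,t_1^{2k+2}$, which, reindexed, produces exactly the normalization $\frac{(2n_1+1)!!}{\text{---}}$ appearing on the DVV side; (ii) the partial-fraction expansion of $\frac{t_i^{2a+2}t_j - t_j^{2a+2}t_i}{t_i-t_j}$ collapses to $\sum_{p+q=a} t_i^{2p+1}t_j^{2q+1}$-type sums, which after matching against $\mathbf{H}^{\text{top}}_{g,\ell-1}$ yields the combinatorial factor $\frac{(2n_1+2n_j-1)!!}{(2n_1+1)!!(2n_j-1)!!}$ of the first line of \eqref{eq:DVV}; and (iii) $D_{u_1}D_{u_2}\big[(2a-1)!!(2b-1)!!\,u_1^{2a+1}u_2^{2b+1}\big]$ evaluated at $u_1=u_2=t_i$ produces $(2a+1)!!(2b+1)!!\,t_i^{2a+2b+4}$, which, since the exponent must equal $2(n_1-1)+2 = 2n_1$, forces $a+b=n_1-2$ and gives the factor $(2a+1)!!(2b+1)!!/(2n_1+1)!!$ of the last line of \eqref{eq:DVV}. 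The quadratic term is handled identically using the product rule $D_i(fg)$ and the same splitting of the Hodge integral $\Lambda_g^\vee(1)$ into its degree-$0$ part (the factor $\lambda_0=1$), which is precisely what survives at top degree.

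The main obstacle I expect is the careful tracking of the double-factorial normalizations and the combinatorial prefactors through the operator $D_i$ and the partial-fraction step — in particular, verifying that the $\frac{1}{t_i-t_j}$ denominator in the $\sum_{i<j}$ term, combined with the symmetrized numerator, reproduces exactly the coefficient $\frac{(2n_1+2n_j-1)!!}{(2n_1+1)!!(2n_j-1)!!}$ rather than something off by a power of $2$ or a sign. A secondary subtlety is the passage from the summed form \eqref{eq:main} to the coefficient-extracted form \eqref{eq:DVV}: one must confirm that no $\lambda_j$ with $j\ge 1$ can enter at top degree (they strictly lower the degree, since each $\lambda_j$ in \eqref{eq:LTofHgmu} is accompanied by $\hxi_{n_i}$ of smaller total degree), so that the top-degree recursion is ``pure gravity'' and matches DVV on the nose. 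Once these normalizations are pinned down, the identification is a mechanical monomial-by-monomial check.
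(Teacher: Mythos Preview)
Your approach is exactly the paper's: restrict \eqref{eq:main} to its homogeneous top-degree part and read off the coefficient of a single monomial to recover \eqref{eq:DVV}. The only slips are bookkeeping---the monomial to extract is the asymmetric $t_1^{2n_1+2}\prod_{j\ge 2}t_j^{2n_j+1}$ (since only the $\frac{1}{t_1}D_1$ summand on the left raises the $t_1$-exponent by one), and in (iii) each $D$ raises degree by $2$, so the exponent is $2a+2b+6$ matched against $2n_1+2$, though your conclusion $a+b=n_1-2$ is still correct.
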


\begin{proof}
Choose $n_{[\ell]}$ so that 
$|n_{[\ell]}|= n_1+n_2+\cdots +n_\ell=3g-3+\ell$.
The degree of the left-hand side of (\ref{eq:main}) is $3(2g-2+\ell) +1$. 
So we
 compare the coefficients of $t_1 ^{2n_1+2}\prod_{j\ge 2} t_j ^{2n_j+1}$
in the recursion formula. 
The contribution from the left-hand side of (\ref{eq:main}) is 
$$
\la \tau_{n_{[\ell]}}\ra _{g,\ell}
(2n_1+1)!!\prod_{j\ge 2} (2n_j-1)!!.
$$
The contribution from the first line of the right-hand side comes from
\begin{multline*}
\sum_{j\ge 2}
\la\tau_m\tau_{n_{[\ell;\hat{1}\hat{j}]}}\ra_{g,\ell-1}
(2m+1)!! 
 \frac{t_1^2 t_j t_1 ^{2m+3}
-t_j^2t_1t_j ^{2m+3}}{t_1-t_j}
\\
=
\sum_{j\ge 2}
\la\tau_m\tau_{n_{[\ell;\hat{1}\hat{j}]}}\ra_{g,\ell-1}
(2m+1)!! t_1t_j
 \frac{ t_1 ^{2m+4}
-t_j ^{2m+4}}{t_1-t_j}
\\
=
\sum_{j\ge 2}
\la\tau_m\tau_{n_{[\ell;\hat{1}\hat{j}]}}\ra_{g,\ell-1}
(2m+1)!!
\sum_{a+b=2m+3}t_1 ^{a+1}t_j ^{b+1},
\end{multline*}
where $m=n_1+n_j-1$. The matching term in this
formula is $a=2n_1+1$ and $b=2n_j$. Thus we extract
as the coefficient of $t_1 ^{2n_1+2}\prod_{j\ge 2} t_j ^{2n_j+1}$
$$
\sum_{j\ge 2}
\la\tau_{n_1+n_j-1}\tau_{n_{[\ell;\hat{1}\hat{j}]}}\ra_{g,\ell-1}
(2n_1+2n_j-1)!!\prod_{k\ne 1,j}(2n_k-1)!!.
$$
The contributions of the second and the third lines 
of the right-hand side  of (\ref{eq:main}) are
\begin{multline*}
\half
\sum_{a+b=n_1-2}
\left(
\la \tau_a \tau_b \tau_{L\setminus \{1\}}\ra_{g-1,\ell+1}
+\frac{1}{2}
\sum_{\substack{g_1+g_2=g\\
J\sqcup K= [\ell;\hat{1}]}} ^{
\text{stable}}
\la \tau_a\tau_{n_J}\ra _{g_1,|J|+1}\cdot
\la \tau_b\tau_{n_K}\ra _{g_2,|K|+1}
\right)
\\
\times
(2a+1)!!(2b+1)!!\prod_{j\ge 2}(2n_j-1)!!.
\end{multline*}
We have thus recovered the Witten-Kontsevich theorem
\cite{DVV, K1992, W1991}.
\end{proof}

The $\lambda_g$ formula \cite{FP1, FP2} is
\begin{equation}
\label{eq:lam-g}
\la \tau_{n_{[\ell]}}\lambda_g\ra_{g,\ell}
=\binom{2g-3+\ell}{n_{[\ell]}}b_g,
\end{equation}
where 
\begin{equation}
\label{eq:multinomial}
\binom{2g-3+\ell}{n_{[\ell]}}=
\binom{2g-3+\ell}{n_1,\dots,n_\ell}
\end{equation}
is the multinomial coefficient, and 
$$
b_g = \frac{2^{2g-1}-1}{2^{2g-1}}\;
\frac{|B_{2g}|}{(2g)!}
$$
is a coefficient of the series
$$
\sum_{j=0} ^\infty b_j s^{2j}
= \frac{s/2}{\sin(s/2)}.
$$

\begin{prop}
\label{prop:lam-g}
The lowest degree terms of the topological recursion {\rm{(\ref{eq:main})}}
proves the combinatorial factor of the
$\lambda_g$ formula
\begin{equation}
\label{eq:lam-g-combinatorial}
\la \tau_{n_{[\ell]}}\lambda_g\ra_{g,\ell}
=\binom{2g-3+\ell}{n_{[\ell]}}
\la \tau_{2g-1}\lambda_g\ra_{g,1}.
\end{equation}

\end{prop}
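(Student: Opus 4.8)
The plan is to run the argument of Proposition~\ref{prop:WK} at the opposite end of the degree range: rather than comparing the top-degree coefficients of the topological recursion \eqref{eq:main}, I would compare its \emph{lowest}-degree coefficients. Both sides of \eqref{eq:main} are polynomials in $t_1,\dots,t_\ell$, so their lowest-degree homogeneous components agree, and \eqref{eq:lam-g-combinatorial} is precisely the translation of that identity into Hodge integrals via the lowest-degree formula \eqref{Fgllow}.

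The first step is a valuation count. By \eqref{Fgllow} the polynomial $\mathbf{H}_{g,\ell}$ has valuation $2g-3+2\ell$, and the operator $2g-2+\ell+\sum_i t_i^{-1}D_i$ on the left of \eqref{eq:main} preserves it, since $t_i^{-1}D_i=t_i(t_i-1)\partial_i$ raises degree by $0$ or $1$ and never lowers it. On the right each $D_i=t_i^2(t_i-1)\partial_i$ raises valuation by exactly $1$, and feeding \eqref{Fgllow} into the three lines shows that the $\mathbf{H}_{g-1,\ell+1}$-term has valuation $2g-1+2\ell$ and the quadratic splitting term has valuation $2g-2+2\ell$ --- both strictly larger than $2g-3+2\ell$. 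Hence in the lowest-degree part of \eqref{eq:main} only the ``join'' line (the first line of the right-hand side) survives, alongside the left-hand side. This is the structural reason the resulting relation keeps $g$ fixed and lowers only $\ell$: no genus drop, no splitting --- exactly the shape dictated by \eqref{eq:lam-g-combinatorial}.

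The second step is the coefficient extraction. Substituting \eqref{Fgllow} for $\mathbf{H}_{g,\ell-1}$ into the join line, applying $D_i$, and retaining only the minimal-degree pieces, that line becomes $1/(t_i-t_j)$ times a sum of terms $(-1)^{3g-4+\ell}\la\tau_a\,\tau_{n_{[\ell;\hat{i}\hat{j}]}}\lambda_g\ra_{g,\ell-1}(a+1)!\,(t_i^{a+4}-t_j^{a+4})\prod_{k\ne i,j}n_k!\,t_k^{n_k+1}$; since $(t_i^{a+4}-t_j^{a+4})/(t_i-t_j)=\sum_{p+q=a+3}t_i^{p}t_j^{q}$, picking out $t_i^{n_i+1}t_j^{n_j+1}$ (so $a=n_i+n_j-1$) and summing over $i<j$ gives $(-1)^{3g-4+\ell}\sum_{i<j}\la\tau_{n_i+n_j-1}\,\tau_{n_{[\ell;\hat{i}\hat{j}]}}\lambda_g\ra_{g,\ell-1}(n_i+n_j)!\prod_{k\ne i,j}n_k!$. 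On the left the same coefficient is $(1-\ell)(-1)^{3g-3+\ell}\la\tau_{n_{[\ell]}}\lambda_g\ra_{g,\ell}\prod_k n_k!$, using $(2g-2+\ell)-\sum_k(n_k+1)=1-\ell$. Equating (note $(-1)^{3g-4+\ell}=-(-1)^{3g-3+\ell}$) and converting factorials to binomials yields
\[
(\ell-1)\,\la\tau_{n_{[\ell]}}\lambda_g\ra_{g,\ell}
=\sum_{i<j}\binom{n_i+n_j}{n_i}\,\la\tau_{n_i+n_j-1}\,\tau_{n_{[\ell;\hat{i}\hat{j}]}}\lambda_g\ra_{g,\ell-1}.
\]
For $\ell\ge 2$ this determines the $\ell$-point $\lambda_g$-integral from $(\ell-1)$-point ones; iterating reduces everything to the single $(g,1)$-integral on the right of \eqref{eq:lam-g-combinatorial}, so it remains only to verify that the multinomial ansatz $\la\tau_{n_{[\ell]}}\lambda_g\ra_{g,\ell}=\binom{2g-3+\ell}{n_{[\ell]}}\la\tau_{2g-1}\lambda_g\ra_{g,1}$ solves the displayed recursion. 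Writing $N=\sum_k n_k$, each summand equals $(n_i+n_j)\,\frac{(N-1)!}{\prod_k n_k!}\,\la\tau_{2g-1}\lambda_g\ra_{g,1}$, and $\sum_{i<j}(n_i+n_j)=(\ell-1)N$, so the right-hand side is $(\ell-1)\binom{N}{n_{[\ell]}}\la\tau_{2g-1}\lambda_g\ra_{g,1}$, which is exactly the left. This establishes \eqref{eq:lam-g-combinatorial}.

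I expect the main obstacle to be the valuation and coefficient bookkeeping of the first two steps: one must check that the two genus-altering terms are subleading by precisely one degree (an off-by-one would wrongly mix in lower Hodge classes $\lambda_{<g}$), and keep the signs $(-1)^{3g-3+\ell}$ versus $(-1)^{3g-4+\ell}$ together with the factorials $n_i!$ coming from \eqref{Fgllow} and from the $D_i$'s perfectly aligned. Dividing $t_i^{a+4}-t_j^{a+4}$ by $t_i-t_j$ and isolating the monomial $t_i^{n_i+1}t_j^{n_j+1}$ is routine but must be done carefully; by contrast the closing combinatorial identity $\sum_{i<j}(n_i+n_j)=(\ell-1)\sum_k n_k$ is immediate.
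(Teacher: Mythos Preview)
Your proposal is correct and follows essentially the same approach as the paper: extract the lowest-degree homogeneous part of \eqref{eq:main}, observe that only the join line survives, and read off the recursion \eqref{eq:lam-g-recursion}. You in fact supply two details the paper leaves implicit---the explicit verification that the multinomial ansatz satisfies the recursion via $\sum_{i<j}(n_i+n_j)=(\ell-1)N$, and the sharper valuation $2g-2+2\ell$ for the stable splitting term (the paper states $|n_{[\ell]}|+\ell+2$ for both subleading lines, which is off by one for the splitting but immaterial since it still exceeds $2g-3+2\ell$).
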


\begin{proof}
Choose  $n_{[\ell]}$ subject to $|n_{[\ell]}|=2g-3+\ell$. 
We compare the coefficient of  the terms of
 $\prod_{i\ge 1}
t_i ^{n_i+1}$ in (\ref{eq:main}), which has
degree $|n_{[\ell]}|+\ell =2g-3+2\ell$. The left-hand side contributes
\begin{multline*}
(-1)^{2g-3+\ell}(-1)^g
\la\tau_{n_{[\ell]}}\lambda_g\ra_{g,\ell}
\prod_{i\ge 1} n_i!
\left(
2g-2+\ell - \sum_{i=1} ^\ell (n_i+1)
\right)
\\
=
(-1)^{\ell}(-1)^g
\la\tau_{n_{[\ell]}}\lambda_g\ra_{g,\ell}
(\ell-1)
\prod_{i\ge 1}n_i!.
\end{multline*}
The lowest degree terms of the
first line of the right-hand side are
\begin{equation*}
(-1)^g
\sum_{i< j}\sum_m
\la \tau_m\tau_{n_{[\ell;\hat{i}\hat{j}]}}
\lambda_g\ra _{g,\ell-1}
(-1)^{m}(m+1) !
 \frac{  
 t_i ^{m+4}
    -
    t_j^{m+4}}{t_i-t_j}
    (-1)^{2g-3+\ell-n_i-n_j}
    \prod_{k\ne i,j}n_k! t_k ^{n_k+1}.
\end{equation*}
Since $m=n_i+n_j-1$, the coefficient of  $\prod_{i\ge 1}
t_i ^{n_i+1}$ is
\begin{equation*}
-(-1)^g(-1)^{2g-3+\ell}\sum_{i< j}
\la \tau_{n_i+n_j-1}\tau_{n_{[\ell;\hat{i}\hat{j}]}}
\lambda_g\ra _{g,\ell-1}
\binom{n_i+n_j}{n_i}
\prod_{i\ge 1}n_i! .
\end{equation*}
Note that the lowest degree coming from the second and the third
lines of the right-hand side of (\ref{eq:main}) is 
$|n_{[\ell]}|+\ell + 2$, which
is higher than the lowest degree of the left-hand side. 
Therefore, we have 
obtained a recursion equation with respect to $\ell$
\begin{equation}
\label{eq:lam-g-recursion}
(\ell-1)\la \tau_{n_{[\ell]}}\lambda_g \ra_{g,\ell}=
\sum_{i< j}
\la \tau_{n_i+n_j-1}\tau_{n_{[\ell;\hat{i}\hat{j}]}}
\lambda_g\ra _{g,\ell-1}
\binom{n_i+n_j}{n_i}.
\end{equation}
The solution of the recursion equation 
(\ref{eq:lam-g-recursion})
is the multinomial coefficient (\ref{eq:multinomial}).
\end{proof}

\begin{rem}
Although the polynomial recursion 
(\ref{eq:main}) determines all linear Hodge integrals,
the closed formula
$$
b_g = \la \tau_{2g-2}\lambda_g\ra_{g,1}\qquad g\ge 1
$$ 
does not directly follow  from it.
\end{rem}

\subsection{From a tree counting to  the Lambert curve}
\label{Coda}

As in Section~\ref{FugueQC}, but different from \eqref{yHur}, let us use
the classical convention 
\be
\label{ynew}
y(x)=\sum_{k=1}^\infty \frac{k^{k-1}}{k!}x^k
\ee
for the function $y=y(x)$. Then the Lagrange Inversion Theorem
gives us the classical Lambert curve $x = y e^{-y}$, as pointed out in \eqref{classicalLambert}. We take the exterior derivative of this expression
$dx= (1-y)e^{-y}dy$. Then we obtain a differential equation 
for $y(x)$:
\be
\label{ydiff}
Dy = \frac{y}{1-y}, \qquad D = x\,\frac{d}{dx}.
\ee
Since it is a nonlinear differential equation, it is  not 
a Picard-Fuchs type equation such as the one \eqref{xzdiff}
for the Catalan case. However, as the Picard-Fuchs equation \eqref{xzdiff}
leads to the spectral curve $x=z+1/z$, 
the differential equation \eqref{ydiff} actually 
determines the Lambert curve, as we see below.

In this Subsection, we deduce Differential Equation \ref{ydiff}
from a purely combinatorial nature of the tree counting, and solve
it to identify the Lambert curve,
without 
appealing to the analysis of the Lagrange theorem. 

We learn from the excellent textbook of Lov\'asz et al.\ \cite{Ltext} that
the $(0,1)$ Hurwitz number of degree $d$,
$H_0\big((d)\big) =\frac{d^{d-1}}{d!}$ of \eqref{01Hur},
is the number of \emph{rooted trees} on $d$ nodes, 
counted in the stack sense. It means the reciprocal of the
order of the automorphism 
group of each tree is used as a weight in counting. 
Cayley's Theorem says that the number of all \emph{node-labeled} trees
on $d$ nodes is $d^{d-2}$, which is of course a positive integer. 
Since $d$ nodes are labeled in $d!$ different ways, 
the ratio $\frac{d^{d-2}}{d!}$ is the ``number'' of \emph{unlabeled}
trees. Since it is not an integer for $d\ge 2$, this ``counting'' is not
the count of elements of a set. We are counting \emph{objects}
in a category, and automorphisms are taken into account. 

\begin{quest}
If you have two isomorphic objects, then you count it as one. 
For example, there are two groups of order $6$, the cyclic group $C_6$
and the permutation group $S_3$. The dihedral group of a triangle $D_3$
is not counted because it is isomorphic to $S_3$.
Now a question: Suppose we have an object that 
has a non-trivial automorphism group of order $2$. 
Do we count it as one, or a \textbf{half}?
\end{quest}

\begin{rem}[On categorical counting]
\label{categorical}
\bite
\item
The textbook \cite{Ltext} mentioned above also talks about the set-theoretical count of 
the number of trees. There is no exact formula for that number. 
What becomes an important question in set-theoretical count is the \emph{asymptotic
behavior} of the number. 

\item
When we count,
it is always the best practice to count labeled objects first, because then 
we have a clear definition of the objects we are counting. 
For example, in \eqref{Fgn Catalan}, $C_{g,n}(\mu_1,\dots,\mu_n)$
is the number of \emph{cell graphs} with \emph{labeled}
vertices and no local rotation symmetries around each vertex are allowed.
We then obtain  $C_{g,n}(\mu_1,\dots,\mu_n)$ as an integer. 
But in \eqref{Fgn Catalan}, we have the denominator $\mu_1\cdots\mu_n$,
which is exactly the order of the product group of the rotation group at each vertex. This ratio is therefore \emph{not} equal to the ``number'' of 
actual isomorphism classes of a cell graph. 

\item 
Identifying the automorphism group of a large cell graph, even a tree, 
is a computationally complex task. Often the \emph{categorical count}
leads to a beautiful formula. We employ this point of view
 everywhere in these lectures.

\item
But if your interest is really the actual set-theoretical count, then what do
you do? Since there is no exact formula expected, the question is how you 
obtain the asymptotic analysis of the formula. 
\item \textbf{Bingo!} Yes, you got it. \textbf{This is why we are
calculating the Laplace transform!} 
In both Catalan and Hurwitz cases, the Laplace transform 
leads to the topology of the moduli spaces $\cM_{g,n}$ and $\Mbar_{g,n}$. 
The idea here is similar to the Ehrhart polynomials and the Weil conjecture.
\eite
\end{rem}

So we have $\frac{d^{d-2}}{d!}$ \emph{unlabeled} trees on $d$ nodes
in our categorical count. 
To make a tree a \emph{rooted} tree, we need to pick a node and declare
that it is the root. We have $d$ such choices. Therefore, the number
of rooted trees on $d$ nodes is 
$$
d\, \frac{d^{d-2}}{d!}=\frac{d^{d-1}}{d!}.
$$
Suppose now we have two rooted trees. We can join these two roots
with a new edge. The result is a new tree, where there is no root any more.
But we have a particular edge, which did not exist in any of the two original
trees. Thus we obtain a \textbf{based tree}, i.e., a tree 
in which a particular edge is chosen
and declared it to be the \emph{base} of the tree.

\begin{figure}[htb]
\label{figtree}
\center{\includegraphics[width=3.4in]{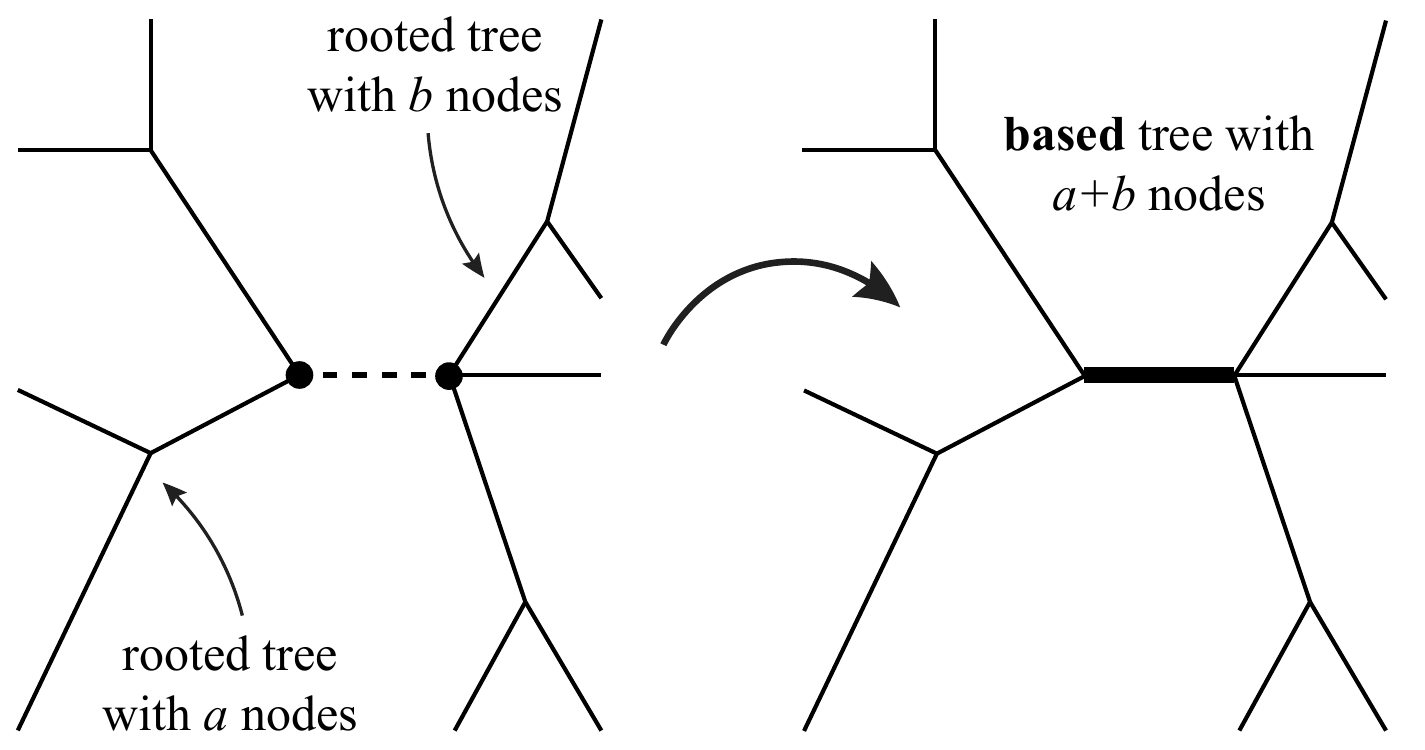}}
\caption{Construction of a \textbf{based} tree from two 
\textbf{rooted} trees.}
\end{figure}

The process if reversible. When you have a tree, and an edge has
a name ``base'' on it, then we can simply remove it. Being a tree,
the process produces two disjoint trees. 
The two ends of the removed edge will become the roots of the
two trees. A tree on $n$ nodes has $n-1$ edges. 
So we obtain a bijective counting formula
\be
\label{treecount}
\half \sum_{\substack{a+b=n\\a\ge 1,b\ge 1}} \frac{a^{a-1}}{a!}
\cdot 
 \frac{b^{b-1}}{b!}
 =
 (n-1)\,\frac{n^{n-2}}{n!}, \qquad n\ge 2.
\ee
The half on the left hand side compensates the double count of
interchanging the two parts. 
In terms of the generating function, or the Laplace transform $y(x)$ 
of \eqref{ynew}, the tree counting formula
\eqref{treecount}, after multiplying $nx^n$, produces the following
differential equation with $D=x(d/dx)$:
$$
D\left(\half y^2\right) = xD\left(\frac{y}{x}\right) \quad
\Longleftrightarrow \quad
yy' = \frac{xy'-y}{x}
\quad
\Longleftrightarrow \quad
Dy = \frac{y}{1-y}.
$$
Thus we recover \eqref{ydiff},  and this is exactly what we need
throughout Section~\ref{sect:Hurwitz}. We can also 
directly solve this differential equation and find the
classical Lambert curve, avoiding Lagrange's Inversion Formula 
all together:
$$
(1-y)\;\frac{y'}{y} = \frac{1}{x} \quad \Longrightarrow \quad
x = ye^{-y}.
$$
Here, we use the initial values $y(0)=0$ and $y'(0)=1$ from 
\eqref{ynew}. 


\section{Spectral curves in Higgs bundles and opers}
\label{sect:var}

The goal of this section is to review the biholomorphic
correspondence between the moduli space of spectral curves
and that of opers for any given smooth projective algebraic curve $C$
of genus $g(C)>1$, mentioned in Introduction. Since the passage
goes through $\hbar$-family of deformations of a differential operator,
it has a counter intuitive feature.

\subsection{Moduli spaces of Higgs bundles and character varieties}

Hitchin's introduction \cite{H1} of spectral curves in the study of 
the cotangent bundle of the moduli spaces of vector bundles on a
 smooth base curve has considerably 
expanded the world of \emph{spectral curves}. 
Spectral curves have appeared independently
 in integrable systems and random matrix theory.
With  Hitchin's work, they appear at the heart of 
algebraic geometry \cite{BNR}. 

Looking at the formalism of Lax \eqref{KP}, we immediately see the following:
deformations that the system of PDEs are making
preserve eigenvalues of the Lax operator. This is due to the right-hand side
of the equation, which is a commutator. Because of this feature,
the soliton equation type integrable systems are studied through the 
unified idea of \emph{isospectral deformation theory}.
The spectral curve $\Spec(A_L)$ of \eqref{specL} is therefore 
the fundamental invariant of the evolution equation. 

Hitchin's perspective is to deform these spectral curves. For a pair of 
a vector bundle and a curve $(E,C)$, one can construct a pair of 
a spectral curve and a line bundle on it, $(\pi:\Sigma\rar C, \cL)$,
such that $E\isom \pi_* \cL$. When we move the line bundle on the 
Jacobian, the construction ``covers'' the moduli space of 
vector bundles on $C$ of a fixed topology, by
``recovering'' the vector bundle as $\pi_*\cL$. 
But there is no canonical choice of $\Sigma$. 
Then, why don't  we consider all possibilities?

From the point of view of \eqref{spectral} and the goal of 
constructing differential operators, we now place our spectral curve 
in the cotangent bundle $T^*C$ of $C$ as
\be
\label{Hitchinspectral}
\xymatrix{
\Sigma \ar[dr]_{\pi}\ar[r]^{i} 
&{T^*C}\ar[d]^{\pi}
\\
&\;C	.}
\ee
Such a curve arrises as the \emph{spectrum} of a matrix
$\phi: E\lrar E\tensor K_C$, which Hitchin named a
\emph{Higgs field}. It is a matrix of $1$-forms, $\phi\in H^0(C, \End(E)\tensor  K_C)$.
The spectral curve is the set of 
eigenvalues of $\phi$, i.e., 
$$
\Sigma=\left\{ (z,\lam)\in T^*C\;\left|\;z\in C,\; \lam\in T^*_z C, \;
\det\big(\lam I - \phi(z)\big)=0\right\}\right. .
$$
Notice that the infinitesimal deformation 
space of $E$ is $H^1(C,\End(E))$, which is dual to the space
of Higgs fields:
\be
\label{Serre}
H^1(C,\End(E)) \isom H^0(C, \End(E)\tensor  K_C)^*.
\ee
Hitchin was  naturally led to considering the moduli space of 
pairs $(E,\phi)$, or 
the \emph{Higgs bundles}, to simultaneously deal with
all possible deformations of $E, \Sigma$, and $\cL$ over a fixed
base curve $C$.

Since vector bundles on curves of genus $0$ and $1$ behave
differently from the  curves of \emph{general type} $g=g(C)>1$, 
let us restrict ourselves to  the latter case for now. We also
restrict our attention to vector bundles $E$ of rank $n$ with the fixed
trivial determinant $\det(E)\isom \cO_C$. It is natural 
to restrict the endomorphism sheaf
for such a bundle to be \emph{traceless}, because of the
same reason of $\exp: sl_n(\bC)\lrar SL_n(\bC)$. 
We denote the sheaf of traceless endomorphism by $\End_0(E)$,
which is a rank $n^2-1$ locally free module over $\cO_C$.

The Riemann-Roch formula
\be
\label{RR}
\dim H^0(C,\End_0(E)) - \dim H^1(C,\End_0(E)) = -(g-1)(n^2-1)
\ee
tells us that the \emph{expected} dimension of the moduli space
of  vector bundles of the  trivial determinant 
is $(g-1)(n^2-1)$. Although the 
identity map of $E$ into itself is a non-trivial endomorphism, it
has a non-zero trace, hence it is not a section of $\End_0(E)$.
When do we have a vector bundle with non-trivial endomorphisms?
For example,  $E=\cO_C(m)\dsum \cO_C(-m)$ for a large $m>0$
has endomorphisms because
$$
\dim H^0(C,\End_0(E))=\dim \Hom(\cO_C(-m),\cO_C(m)) +1 = 2m-g+2.
$$
It then contributes to the dimension of the space of
infinitesimal deformations of $E$. Although these vector bundles appear
as  \emph{objects} 
of the moduli \textbf{stack} of vector bundles with trivial determinants,
they need to be excluded from the moduli \textbf{space}.
The \emph{slope stability condition} 
\be
\label{stable}
\frac{\deg(E)}{\rank(E)}> \frac{\deg(F)}{\rank(F)}
\ee
for every proper vector subbundle
$F\subset E$ is imposed for the construction of moduli spaces
to avoid such appearances of large endomorphisms.
For our case, since $E$ has $\deg(E) = 0$, it cannot have any 
subbundles of non-negative degrees. And the 
moduli space $\cS \cU (C,n)$ is smooth at a stable vector bundle 
$E$ with the tangent and cotangent spaces given by
$$
\begin{cases}
T_E \cS \cU (C,n) = H^1(C,\End_0(E))\\
T^*_E \cS \cU (C,n) =H^0(C,\End_0(E)\tensor K_C).
\end{cases}
$$

The Higgs field $\phi\in H^0(C,\End_0(E)\tensor K_C)$ functions as
a marking of the vector bundle $E$. We define an automorphism 
of $(E,\phi)$ to be an automorphism of $E$ that fixes  $\phi$. 
Thus a Higgs field makes the pair $(E,\phi)$ more stable. So
the slope stability of $E$ is modified for a Higgs bundle $(E,\phi)$, 
requiring that \eqref{stable} holds only for \emph{$\phi$-invariant}
subbundles $F$, meaning that $\phi$ maps $F$ to $F\tensor K_C$.

A concrete example better explains this effect. Since $\deg (K_C) = 2g-2$,
the canonical bundle has a square root $K_C^\half$. Actually, the $\pm$ sign choice
redundancy exists here, so there are $\left| H^0\big( C,\bZ\big/2\bZ\big)\right|=
2^{2g}$ different choices of the square root. Any one of them is called a
\emph{spin structure} of $C$, and it appears in Riemann's work
as a \emph{$\theta$-characteristic}. Choose a spin structure 
$K_C^\half$ on $C$ and  define $E=K_C^\half\dsum K_C^{-\half}$.
Clearly $E$ is not a stable vector bundle. Since
$\End(E) = K_C\dsum \cO_C\dsum \cO_C\dsum K_C^{-1}$, 
a non-zero quadratic differential 
$$
q\in H^0(C,K_C^{\tensor 2}) = \Hom\big(K_C^{-\half},K_C^{\half}
\tensor K_C\big)
$$ defines a non-trivial traceless Higgs field of $E$.
Similarly, the identity map 
$$
1\in H^0(C,K_C^{-1}\tensor K_C) = \Hom\big(K_C^{\half},K_C^{-\half}
\tensor K_C\big)
$$ is also a traceless Higgs field of $E$. Adding together, we have a non-trivial traceless Higgs field of $E$ constructed by
\be
\label{rank2section}
\phi = \begin{bmatrix}
0&&q\\
\\
1&&0
\end{bmatrix}:
\left(
\begin{matrix}
K_C^\half\\
\dsum\\
K_C^{-\half}
\end{matrix}
\right)
\lrar
\left(
\begin{matrix}
K_C^\half\\
\dsum\\
K_C^{-\half}
\end{matrix}
\right)
\tensor K_C.
\ee
None of the subbundles $K_C^{\pm\half}$ of $E$ are
invariant under this $\phi$. Hence $(E,\phi)$ is stable.
Being a split bundle, $E$ has a non-trivial automorphism 
for every choice of $1$-form $\omega\in H^0(C,K_C)$,
$$
u = \sqrt{-1}\begin{bmatrix}
1&&\omega\\
\\
0&&-1
\end{bmatrix}:
\left(
\begin{matrix}
K_C^\half\\
\dsum\\
K_C^{-\half}
\end{matrix}
\right)
\lrar
\left(
\begin{matrix}
K_C^\half\\
\dsum\\
K_C^{-\half}
\end{matrix}
\right),\qquad \det u = 1.
$$
But $u$ does not fix $\phi$, because
$u^*(\phi):=u^{-1}\circ \phi \circ u = \begin{bmatrix}
\omega&\omega^2-q\\
-1&-\omega
\end{bmatrix}\ne \phi
$. This means $u$ is not an automorphism of the Higgs bundle
$(E,\phi)$. We rather consider  $(E,\phi)$ and $(E,u^*(\phi))$
are \emph{isomorphic} as Higgs bundles.

Let us denote by $\cM(C,G)$ the moduli space of  
isomorphism classes of stable
$SL_n(\bC)$-Higgs bundles we have been discussing. Here,
$G$ is meant to be $G=SL_n(\bC)$, which can be generalized 
to other Lie groups. If $E$ is itself a stable vector bundle, then
$(E,\phi)$ is stable as a Higgs bundle for any Higgs field 
$\phi$. So we see that the cotangent bundle
of the moduli of stable bundles is included, as an open dense 
subset, in the moduli space of 
stable Higgs bundles:
$$
T^*\cS\cU(C,n)\subset \cM(C,G).
$$
The \emph{complex symplectic structure} on $\cM(C,G)$ is
constructed from this embedding by analyzing the codimension
of the complement of $T^*\cS\cU(C,n)$.

Here, we note the dimension, $\dim_\bC \cM\big(C, SL_n(\bC)\big)
=2(g-1)(n^2-1).$ There is another 
complex symplectic manifold of the same dimension, which appears to be
very different from the context of Higgs bundles. Let 
$$
\Hom\big(\pi_1(C), SL_n(\bC)\big)
$$
denote the space of representations of the fundamental group of $C$
into $SL_n(\bC)$. Slightly modifying the 
homology bases \eqref{homology}, we can choose a
homotopy basis for $\pi_1(C)$ and give a presentation 
\be
\label{pi1}
\pi_1(C) = \big\langle \a_1,\a_2,\dots,\a_g,\b_1,\b_2,\dots,\b_g\;\big|\;
[\a_1,\b_1]\cdot [\a_2,\b_2] \cdots [\a_g,\b_g]=1\big\rangle,
\ee
where $[\a,\b] = \a\b\a^{-1}\b^{-1}$ is the multiplicative commutator.

\begin{figure}[htb]
\includegraphics[width=3in]{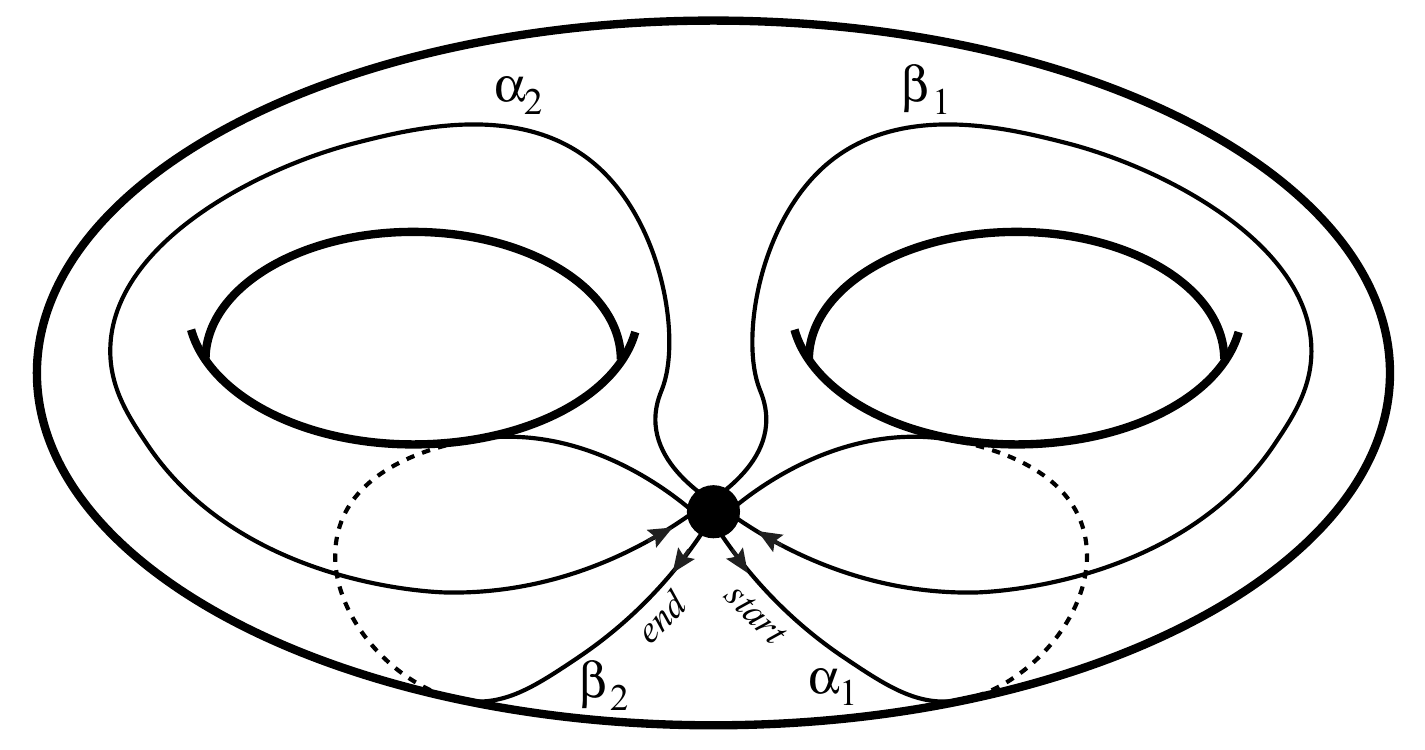}
\caption{A homotopy basis for $\pi_1(C)$ of a curve of genus $2$. $[\a_1,\b_1]\cdot
[\a_2,\b_2]=1$.}
\end{figure}

Since every representation of the fundamental group
 $\rho:\pi_1(C)\lrar SL_n(\bC)$
is determined by its values $a_i=\rho(\a_i)\in SL_n(\bC)$ and 
$b_i=\rho(\b_i)\in SL_n(\bC)$, and since
the commutator relation 
\be
\label{commutator}
[a_1,b_1]\cdot [a_2,b_2] \cdots [a_g,b_g]=1
\ee
is a polynomial 
equation with respect to the entries of the standard $n\times n$
matrix representation of $SL_n(\bC)$,
the representation space is realized as an affine algebraic variety
$$
\Hom\big(\pi_1(C), SL_n(\bC)\big) =
\big\{(a_1,b_1),\dots,(a_g,b_g)\in SL_n(\bC)^{2g}\;\big|\;
[a_1,b_1] \cdots [a_g,b_g]=1\big\}.
$$
For many purposes, we identify two representations when they 
have the same \emph{character}. In our geometric situation, 
two representations $\rho_1$ and $\rho_2$ are identified
if there is a group element $h\in SL_n(\bC)$ so that 
$\rho_1 = h^{-1}\rho_2 h$. This identification induces a
conjugation action of $SL_n(\bC)$ on the space of representations.
Thus we define the \textbf{character variety} 
of the fundamental group of $\pi_1(C)$ into a complex  algebraic 
group $G$ as the categorical 
quotient
\be
\label{charvar}
\rchi \big(\pi_1(C), G\big) = \Hom\big(\pi_1(C), G\big)
\big/\!\!\big/ G,
\ee
understanding that the $G$-action on the representation space is
by conjugation. The \emph{Geometric Invariant Theory} construction
\cite{MFK}
of this quotient makes the character variety a scheme
of dimension  $2(g-1)\dim_\bC G$, where $\dim_\bC G$ is subtracted
twice because of the commutator relation \eqref{commutator} and
the quotient of the conjugation action. 
This quotient is also identified as the \textbf{symplectic quotient}
(see the added chapter in the 3rd edition of \cite{MFK} by Kirwan), 
which makes the character variety a symplectic variety. 
For our case of $G=SL_n(\bC)$, we have
$$
\dim_\bC \rchi \big(\pi_1(C), SL_n(\bC)\big) = 2(g-1)(n^2-1).
$$

It was Narasimhan and Seshadri \cite{NS} who noticed the homeomorphism 
\be
\label{NS}
\rchi \big(\pi_1(C), SU(n)\big) \overset{\sim}{\lrar} \cS\cU(C,n),
\ee
where the character variety is a \emph{real} variety of 
dimension $2(g-1)(n^2-1)$, and $\cS\cU(C,n)$ is a complex
variety of dimension $(g-1)(n^2-1)$. Yet another space of this dimension
is the \emph{base}, whose dimension is calculated via Riemann-Roch again:
\be
\label{base}
B: = \bigoplus_{\ell=2}^n H^0\big(V, K_C^{\tensor \ell}\big),
\quad \dim_\bC \left(\sum _{\ell=2}^n \big(\ell(2g-2) -(g-1)\big)\right) =(g-1)(n^2-1).
\ee
This is the space we mentioned in Section~\ref{theme2},
\eqref{HitchinBase}. Now our \emph{numerology} is complete:
\begin{multline*}
\dim_\bR \rchi \big(\pi_1(C), SU(n)\big)=
2\dim_\bC \cS\cU(C,n) = 2 \dim_\bC B \\= \dim_\bC \cM\big(C,SL_n(\bC)\big)
= \dim_\bC \rchi \big(\pi_1(C), SL_n(\bC)\big) = 2(g-1)(n^2-1).
\end{multline*}
The complexification of Narasimhan-Seshadri \eqref{NS}
 has been established in its vast generality
(\cite{Cor1988, Don1987, H1,Sim1992,Sim1994a,Sim1994b, Sim1997}, see also
\cite{Mochizuki}).
In our context, it is a curve case of the
homeomorphism of Simpson, or \emph{non-Abelian
Hodge correspondence} 
\be
\label{nah}
\rchi \big(\pi_1(C), G\big) \approx \cM(C,G)
\ee
for a complex Lie group $G$. These are hyperk\"ahler manifolds 
and their complex structures differ by a hyperk\"ahler rotation. 
The complex structure of the
character variety has nothing to do with the complex structure of the
curve $C$.

Hitchin's discovery \cite{H1} includes the algebraically completely
integrable system on the moduli space $\cM\big(C,SL_n(\bC)\big)$ of 
Higgs bundles through the characteristic polynomial map to the base 
space $B$,
\be
\label{charmap}
\mu_H: \cM\big(C,SL_n(\bC)\big) \owns (E,\phi) \longmapsto \det(\lam I - \phi)\in B.
\ee
It is a generically Abelian variety fibration, and
each fiber  is a complex Lagrangian of the 
complex symplectic manifold $\cM\big(C,SL_n(\bC)\big)$. 
Since $T^*\cS\cU(C,n)\subset \cM\big(C,SL_n(\bC)\big)$,
each cotangent space $T_E^*\cS\cU(C,n)$ at a stable vector bundle $E$
is also a complex Lagrangian of  $\cM\big(C,SL_n(\bC)\big)$. 

There is another complex Lagrangian, which plays a
critically important role in our current investigation. 
It is obtained by a particular section 
$\kappa: B\hookrightarrow \cM\big(C,SL_n(\bC)\big)$
of the map $\mu_H$. Let us review the construction 
of this section, called the \emph{Hitchin section}, 
 utilizing Kostant's 
\emph{principal three-dimensional subgroup}
(TDS)
\cite{Kostant}. We first choose a spin structure 
$K_C^\half$, and take an arbitrary point
$
\mathbf{q}=(q_2,q_3,\dots,q_n)
\in B$
of the  base $B$.
Define three $n\times n$ matrices in $sl(n,\bC)$ by
\be
\label{TDS}
\begin{aligned}
&X_- := \big[ \sqrt{s_{i-1}} \delta_{i-1,j}\big]_{ij}
=\begin{bmatrix}
0&0&\cdots&0&0\\
\sqrt{s_1}&&&&0\\
0&\sqrt{s_2}&&&0\\
\vdots&&\ddots&&\vdots\\
0&0&\cdots&\sqrt{s_{n-1}}&0
\end{bmatrix},
\qquad  X_+ := X_-^t,
\\
& H :=[X_+,X_-],
\end{aligned}
\ee
where $s_i := i(n-i)$.
$H$ is a diagonal matrix whose 
$(i,i)$-entry is $s_i-s_{i-1}
= n - 2i+1$, with $s_0=s_r=0$. 
The Lie subalgebra generated by $X_+,X_-,H$ is isomorphic to 
$sl(2,\bC)$, and  is called  
a \emph{principal TDS} of $SL(r,\bC)$. 

\begin{Def}
\label{Hitchin section}
The \emph{Hitchin section} is defined as the set
$\left\{\big(E_0,\phi(\mathbf{q})\big)\;\big|\;\mathbf{q}\in B\right\}$
consisting of Higgs bundles, with the fixed vector bundle 
\be
\label{E0}
E_0 := \left(K_C^\half\right)^{\tensor H}
=\bigoplus_{i=1}^n \left(K_C^\half\right)^{\tensor (n-2i+1)}
=\bigoplus_{\ell=0}^{n-1} K_C^{-\frac{n-1}{2}}\tensor K_C^{\tensor \ell},
\quad(\ell = n-i)
\ee
and varying Higgs fields  with parameter $\mathbf{q}$:
\be\label{phi q}
\phi(\mathbf{q}): = 
X_- + \sum_{\ell = 2}^n q_\ell X_+ ^{\ell-1}.
\ee
\end{Def}

Each Higgs bundle $\big(E_0,\phi(\mathbf{q})\big)$
as constructed is stable.
The Hitchin section 
$$
\kappa: B\owns \bq
\longmapsto \big(E_0,\phi(\mathbf{q})\big)
\in  \cM\big(C,SL_n(\bC)\big)
$$
is a biholomorphic map from 
$B$ to $\kappa(B)\subset \cM\big(C,SL_n(\bC)\big)$. 
The example $\left(K_C^\half\dsum K_C^{-\half}, \phi\right)$
of \eqref{rank2section} is the rank $2$ case of the
Hitchin section.

\subsection{Higher order linear
differential operators on a curve $C$}
\label{subsect:higherorder}

\begin{quest}
What is a linear differential operator of order $n\ge 2$ globally
defined on a compact smooth algebraic curve $C$?
\end{quest}

If $\o$ is a global holomorphic 
$1$-form on $C$, then 
$
(d+\o)\psi=0
$
is a first order linear differential equation globally defined on $C$.
But we have no globally defined second order differential operator,
since $d^2 = 0$. Once again let us go back to 
\eqref{nth order}, and consider the second order case first:
\be\label{2nd}
\left(\frac{d^2}{dz^2} -q(z)\right)\psi(z) = 0.
\ee
We need the following:
\bite
\item To find a  coordinate system on the curve $C$ that allows 
the expression \eqref{2nd} stays globally  the same; and
\item To show that every complex structure of $C$ admits  such a 
coordinate system.
\eite
We consider $\psi(z)$ of \eqref{2nd} to be a section of some
unknown  line bundle $L$ with a transition function $e^{g(w)}$ such that
$e^{-g(w)} \psi(w) = \psi(z)$. The second order differential operator acts on $L$  
and change its section  into  a section of $K_C^{\tensor 2}\tensor L$. 
Therefore, we wish to solve the equation
\be\label{z=w}
\left(\left(
\frac{d}{dz}\right)^2-q(z)\right)\psi(z) = 
\left(\frac{dw}{dz}\right)^2e^{-g(w)} \left(\left(
\frac{d}{dw}\right)^2-q(w)\right)\psi(w).
\ee
Here, the unknowns are the coordinate change $w=w(z)$ as a function in $z$, 
and
the  function $g(w)$.  It is natural to assume that 
the $0$-th order term $q$ of the differential operator
satisfies $q(z)dz^2 = q(w)dw^2$,  i.e.,  
it is  a quadratic differential $q\in K_C^{\tensor 2}$.
With these considerations, we can simplify \eqref{z=w} and obtain
\be
\begin{aligned}
\label{2nd calculation}
&
\left(\frac{d}{dz}\right)^2
=
\left(\frac{dw}{dz}\right)^2
e^{-g(w)}\left(\frac{d}{dw}\right)^2e^{g(w)}
\\
\Longleftrightarrow \quad
&\frac{d}{dz}\left(\frac{dw}{dz}\frac{d}{dw}\right)
= 
\left(\frac{dw}{dz}\right)^2\left(\left(\frac{d}{dw}\right)^2
+ 2g_w(w)\frac{d}{dw} + g_{ww}(w)+g_w(w)^2\right)
\\
\Longleftrightarrow \quad
&w'' \frac{d}{dw} = 2(w')^2g_w\frac{d}{dw} + g_{ww} + g_w^2
\\
\Longleftrightarrow \quad
&\begin{cases}
2g_w w'= \frac{w''}{w'}\\
g_{ww} + g_w^2=0\;
\end{cases}
\Longleftrightarrow \quad
\begin{cases}
g(w)'= \half(\log w')'\\
g_{ww} + g_w^2=0\;,
\end{cases}
\end{aligned}
\ee
where $w' = \frac{dw}{dz}$ and $g_w=\frac{dg}{dw}$.
The first line of the final equations implies
$$
e^{-g(w)}= \sqrt{\frac{dz}{dw}}
$$
because $g(w)=0$ when $w(z)= z$.
Therefore,
the line bundle is $L\isom K_C^{-\half}$. And the second line, 
$g_{ww} + g_w^2=0$, after substituting 
$g_w= \half \frac{w''}{(w')^2}$ into it,
gives us exactly $s_z(w)=0$,
where 
 \be
 \label{schwarz}
 s_z(w):=\left(\frac{w''(z)}{w'(z)}\right)'-\frac{1}{2}\left(
\frac{w''(z)}{w'(z)}\right)^2
 \ee
is the   \textbf{Schwarzian derivative}.
We learn from Gunning \cite{Gun} that every complex structure
of $C$ admits a \emph{projective coordinate system},
and that the transition function of this coordinate system
\be
\label{projective}
w(z) = \frac{az+b}{cz+d}, \qquad 
\begin{pmatrix}
a&b\\
c&d
\end{pmatrix}\in SL(2, \bC),
\ee
satisfies the equation $s_z(w)=0$.

The calculation of \eqref{2nd calculation} can be 
extended to an operator $P$ of order $n$. The comparison of  the $(n-1)$-order terms in
\be
\label{n}
\left(\frac{d}{dz}\right)^n
=
\left(\frac{dw}{dz}\right)^n
e^{-g(w)}\left(\frac{d}{dw}\right)^ne^{g(w)}
\ee                                                                                                                                                                                                                                                                                                                                                                                                                                                                                                                                                                                                                                                                                                                                                                                                                                                                                                                                                                                                                                                                                                                                                                                                                                                                                                                                                                                                                                                                           
gives
$$
\binom{n}{2}(w')^{n-2}w''\left(\frac{d}{dw}\right)^{n-1} 
=n (w')^n g_w \left(\frac{d}{dw}\right)^{n-1}
\Longleftrightarrow 
\quad g(w)' = \frac{n-1}{2}(\log w')'.
$$
Hence the line bundle on which $P$ acts is $K_C^{-\frac{n-1}{2}}$.
And when we use the projective coordinate system, from \eqref{projective}
we find  $\frac{dw}{dz} = \frac{1}{(cz+d)^2}$. Therefore, 
the transition function for the canonical sheaf is $(cz+d)^2$, and with a 
choice of a spin structure on $C$, we can use $cz+d$ as the transition function
of $K_C^\half$. 
Then \eqref{n} should give 
\be
\begin{aligned}
\label{Kn}
&\left(\frac{d}{dz}\right)^n
=
\left(\frac{dw}{dz}\right)^n
e^{-g(w)}\left(\frac{d}{dw}\right)^ne^{g(w)}
\\
\Longleftrightarrow \quad
& \left(\frac{d}{dz}\right)^n
=
(cz+d)^{-n-1}\left(\frac{d}{dw}\right)^n(cz+d)^{-n+1}
\\
\Longleftrightarrow \quad
&\left(\frac{d}{dw}\right)^n
=
(cz+d)^{n+1}\left(\frac{d}{dz}\right)^n(cz+d)^{n-1},
\end{aligned}
\ee
which is easy to check by induction on $n$. 

The differential equation $P\psi = 0$ makes sense globally on $C$ 
if $\psi$ is a section of $K_C^{-\frac{n-1}{2}}$, and $P$ changes it
to a section of $K_C^{\tensor n}\tensor K_C^{-\frac{n-1}{2}}
= K_C^{\frac{n+1}{2}}$. In terms of the projective coordinates, 
we start with a section $\psi\in \tensor K_C^{-\frac{n-1}{2}}$, which satisfies
$\psi(w) = (cz+d)^{-(n-1)}\psi(z)$. Then the third line of \eqref{Kn} gives
\begin{align*}
\psi^{(n)}(w):= \left(\frac{d}{dw}\right)^n \psi(w)
&=
(cz+d)^{n+1}\left(\frac{d}{dz}\right)^n(cz+d)^{n-1}
(cz+d)^{-(n-1)}\psi(z)
\\
&=(cz+d)^{n+1}\left(\frac{d}{dz}\right)^n\psi(z) = (cz+d)^{n+1}\psi^{(n)}
(z).
\end{align*}
Thus the $n$-derivative $\psi^{(n)}$ globally makes sense
as a section of  $K_C^{\frac{n+1}{2}}$ in this
projective coordinate system.

In conclusion, we have verified that a locally given 
$n$-th order differential operator $P$ of \eqref{nth order}
can globally be extended on the curve $C$
if we use a projective coordinate system on 
$C$,  and if $P$ is considered to be a $\bC$-linear map
$P:K_C^{-\frac{n-1}{2}}\lrar K_C^{\frac{n+1}{2}}$.

\subsection{Spectral curves and opers}
\label{subsect:oper}

\begin{quest}
An $n$-th oder globally defined  linear differential equation $P\psi=0$ on $C$ is
equivalent to a system of $n$ first order differential
equations on every local neighborhood $U\subset C$.
Does this equivalence globally hold? And if so, what is the 
condition for the system of first order differential equations?
\end{quest}

The \emph{opers} \cite{BD, BZF} give an answer. 
Our starting point is the 
local connection $\nabla_z$ of \eqref{introconn}. Since $\psi\in 
K_C^{-\frac{n-1}{2}}$, and its $n$-th derivative 
$\psi^{(n)}\in K_C^{\frac{n+1}{2}}$, the vector $\Psi$ of
\eqref{introconn} on which $\nabla_z$ acts is a local section of 
$$
\bigoplus_{\ell = 0}^{n-1} K_C^{-\frac{n-1}{2}}\tensor K_C^{\tensor \ell},
$$
which is the same as $E_0$ of \eqref{E0}. And the matrix 
expression of $\nabla_z$ of  \eqref{introconn} exhibits a similarity with 
the Higgs field $\phi(\mathbf{q})$ of \eqref{phi q}. Here, 
\emph{similarity} means that the connection matrix of \eqref{introconn}
and the matrix of \eqref{phi q} have the same characteristic polynomial, 
which is a point of the \emph{base} $B$.
However, we learn from Atiyah \cite{Atiyah} that
 there is no holomorphic connection on this particular vector 
 bundle $E_0$. 
We need to identify the vector bundle $E$ such that the
differential operator $\left(P, K_C^{-\frac{n-1}{2}}\right)$ is equivalent to 
a connection $(E,\nabla)$, globally on $C$.  The \emph{equivalence} 
we want to achieve here is the obvious
local equivalence between  \eqref{nth order} and \eqref{introconn}. 
In the local expression
\be
\label{Plocal}
P = \left(\frac{d}{dz}\right)^n + 
\sum_{k=2}^n a_k \left(\frac{d}{dz}\right)^{n-k},
\ee
the coefficients are $a_k\in H^0\big(C,K_C^{\tensor k}\big)$.
Therefore, the dimension of the space of globally defined differential 
operator of order $n$ of the  shape \eqref{Plocal}
is equal to $\dim B = (g-1)(n^2-1)$.

For the purpose of analyzing the situation, 
let us go back to the consideration of the second order case \eqref{2nd} again. 
The operator $P=(d/dz)^2+q(z)$ depends only on $q\in H^0\big(C,K_C^{\tensor 2}\big)=B$ in this case. 
As in Section~\ref{subsect:higherorder}, we use local coordinates
$w$ and $z$ connected by a fractional linear transformation 
\eqref{projective}. The spin structure $K_C^\half$ we choose corresponds
to the transition function $\xi=\xi_{wz} = cz+d$.

\begin{prop}[Section~3.1 of \cite{OM5}]
\label{2oper}
Choose an element $\hbar\in H^1(C,K_C)\isom \bC$. Then
the Higgs bundle on the Hitchin section
\be
\label{2Higgs}
(E_0,\phi) = \left(K_C^\half\dsum K_C^{-\half}, 
\begin{bmatrix} 
&q\\
1&
\end{bmatrix}
\right)
\ee
automatically selects a unique $\hbar$-connection $\big(E^\hbar, \nabla^\hbar\big)$
on the extension bundle 
\be
\label{ext}
\begin{CD}
0@>>> K_C^\half @>>>E^\hbar@>>>K_C^{-\half}@>>>0
\end{CD}
\ee
determined by $\hbar\in \Ext^1\big(K_C^{-\half}, K_C^{\half}
\big)$. Here, the $\hbar$-connection is given by 
\be
\label{hbar-conn}
\nabla^\hbar = \hbar\,d + \begin{bmatrix} 
&q\\
1&
\end{bmatrix}.
\ee
\end{prop}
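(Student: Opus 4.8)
The statement combines an existence claim and a uniqueness claim, and the natural setting is the description of Deligne's $\hbar$-connections by local frames, together with the projective coordinate system of Section~\ref{subsect:higherorder}. Recall that an $\hbar$-connection on a holomorphic bundle $E$ is a $\bC$-linear map $\nabla\colon E\to E\tensor K_C$ obeying $\nabla(fs)=\hbar\,df\tensor s+f\,\nabla s$; in a local frame it reads $\hbar\,d+A_\alpha$, and, with respect to transition functions $g_{\alpha\beta}$ of $E$, the local matrices of $1$-forms $A_\alpha$ are glued by the $\hbar$-twisted gauge rule $A_\beta=g_{\alpha\beta}^{-1}A_\alpha\,g_{\alpha\beta}+\hbar\,g_{\alpha\beta}^{-1}dg_{\alpha\beta}$. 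For $\hbar=0$ this is precisely the gluing of a Higgs field and for $\hbar=1$ that of an ordinary holomorphic connection, so the content of the proposition is that, for each intermediate value of $\hbar$, a suitably scaled off-diagonal extension class is exactly what absorbs the inhomogeneous term, and that nothing else is at one's disposal.

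First I would construct the bundle. Fix a projective coordinate cover $\{(U_\alpha,z_\alpha)\}$ with $z_\alpha=\frac{a_{\alpha\beta}z_\beta+b_{\alpha\beta}}{c_{\alpha\beta}z_\beta+d_{\alpha\beta}}$ a \v{C}ech cocycle valued in $SL(2,\bC)$ and with $K_C^{\half}$ glued by $\xi_{\alpha\beta}=c_{\alpha\beta}z_\beta+d_{\alpha\beta}$, as in Section~\ref{subsect:higherorder}. Identify $\Ext^1\!\big(K_C^{-\half},K_C^{\half}\big)=H^1(C,K_C)\isom\bC$ by Serre duality against $H^0(C,\cO_C)$, and normalize its generator to be the $K_C$-valued \v{C}ech class carried by the off-diagonal cochain $\{\nu_{\alpha\beta}\}$ of the projective structure (built from the $c_{\alpha\beta}$ and the Jacobians $dz_\alpha/dz_\beta=\xi_{\alpha\beta}^{-2}$); this class is nonzero because, up to scale, it computes $c_1\!\big(K_C^{\half}\big)=g-1>0$. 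Then I would define $E^\hbar$ by the block-upper-triangular transition functions
\[
g_{\alpha\beta}^{\hbar}=\begin{bmatrix}\xi_{\alpha\beta}&\hbar\,\nu_{\alpha\beta}\\ 0&\xi_{\alpha\beta}^{-1}\end{bmatrix},
\]
and check that the cocycle identity for these $2\times 2$ matrices follows formally from the $SL(2,\bC)$ cocycle identity of the projective structure. By construction $E^0$ is the split bundle $K_C^{\half}\dsum K_C^{-\half}=E_0$, whereas for $\hbar\neq 0$ the bundle $E^\hbar$ is the non-split extension \eqref{ext}, unique up to isomorphism; being indecomposable of degree $0$, $E^\hbar$ does carry holomorphic connections, which is exactly the complement of the Atiyah obstruction that forbids them on $E_0$.

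The heart of the proof, and the step I expect to be the main obstacle, is the verification that the \emph{same} matrix $A_\alpha=\begin{bmatrix}0&q_\alpha\\ 1&0\end{bmatrix}$, with $q_\alpha\,dz_\alpha^{\tensor 2}$ the local expression of $q\in H^0(C,K_C^{\tensor 2})$, satisfies the $\hbar$-twisted gauge rule for $g_{\alpha\beta}^{\hbar}$. The plan is to expand $(g_{\alpha\beta}^{\hbar})^{-1}A_\alpha\,g_{\alpha\beta}^{\hbar}+\hbar\,(g_{\alpha\beta}^{\hbar})^{-1}dg_{\alpha\beta}^{\hbar}$ and compare it with $A_\beta$ entry by entry. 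The inhomogeneous term contributes $\hbar$ times a matrix whose diagonal is $\pm\,d\log\xi_{\alpha\beta}$, and the off-diagonal entry $\hbar\,\nu_{\alpha\beta}$ of $g_{\alpha\beta}^{\hbar}$ is tailored so that its interaction with the constant lower-left entry $1$ of $A_\alpha$ cancels this diagonal term; the surviving identity in the $(1,2)$ slot then reduces to the honest transformation law $q_\alpha=\xi_{\alpha\beta}^{4}\,q_\beta$ of a section of $K_C^{\tensor 2}$. Every cancellation here goes through precisely because the coordinates are projective, i.e.\ because the Schwarzian $s_{z_\beta}(z_\alpha)$ vanishes; this is the rank-$2$ incarnation of the computation \eqref{2nd calculation} and of the relation $e^{-g(w)}=\sqrt{dz/dw}$ appearing there. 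Granting this, $\hbar\,d+\phi$ glues to a globally defined $\hbar$-connection $\nabla^\hbar$ on $E^\hbar$ of the form \eqref{hbar-conn}, whose restriction at $\hbar=0$ is the Higgs field \eqref{2Higgs}, which establishes existence.

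Finally, for uniqueness I would invoke the oper normal form. Any $\hbar$-connection on $E^\hbar$ that is compatible with the flag $0\subset K_C^{\half}\subset E^\hbar$ and has invertible induced map on the subquotient can be brought, by an upper-triangular unipotent change of frame preserving $\det=1$, to a unique operator $\hbar\,d+\begin{bmatrix}0&\widetilde q\\ 1&0\end{bmatrix}$ with $\widetilde q\in H^0(C,K_C^{\tensor 2})$; since two such operators differ only in their $(1,2)$ entry, the $\hbar$-connection determines and is determined by $\widetilde q$. Because the Higgs bundle on the Hitchin section fixes $q$, the $\hbar$-connection attached to $(E_0,\phi)$ is unique. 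The single genuinely delicate point throughout is the bookkeeping in the preceding paragraph: getting the normalization of the extension cocycle $\{\nu_{\alpha\beta}\}$, the derivative and Jacobian conventions, and the spin-structure signs right so that the $(1,2)$-entry identity becomes literally the quadratic-differential transformation law. Once the projective structure of Gunning is in hand, the rest is formal.
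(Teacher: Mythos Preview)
Your proposal is correct and follows essentially the same route as the paper: define $E^\hbar$ by the upper-triangular transition matrices $\begin{bmatrix}\xi_{\alpha\beta}&\hbar\,\nu_{\alpha\beta}\\0&\xi_{\alpha\beta}^{-1}\end{bmatrix}$ with the off-diagonal cocycle representing the extension class, and then verify that the Higgs matrix $\begin{bmatrix}0&q\\1&0\end{bmatrix}$ satisfies the $\hbar$-twisted gauge rule for these transitions. The paper is simply more explicit that $\nu_{\alpha\beta}=c_{\alpha\beta}=\frac{d}{dz_\beta}\xi_{\alpha\beta}$, the constant arising directly from the $SL(2,\bC)$ cocycle of the projective structure, and it writes down the single transition identity to be checked; your additional uniqueness paragraph via the oper normal form is not in the paper's proof but is a reasonable complement.
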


\begin{rem}
The surprise here is in the connection matrix
\eqref{hbar-conn}. It is identical to 
the Higgs field $\phi$ of \eqref{2Higgs}! But this is exactly what 
we wanted.  
\end{rem}

\begin{proof}
The transition function for the vector bundle $E_0$ is
$\begin{bmatrix} 
\xi_{wz}\\
&\xi_{wz}^{-1}
\end{bmatrix}
$, and the Higgs field $\phi$ transforms
$$
\begin{bmatrix} 
&q(w)\\
1&
\end{bmatrix}dw = 
\begin{bmatrix} 
\xi_{wz}\\
&\xi_{wz}^{-1}
\end{bmatrix}
\begin{bmatrix} 
&q(z)\\
1&
\end{bmatrix}dz
\begin{bmatrix} 
\xi_{wz}\\
&\xi_{wz}^{-1}
\end{bmatrix}^{-1},
$$
because $\phi$ is a matrix valued $1$-form. 
We note that $dw/dz = \xi_{wz}^{-2}=1/(cz+d)^2$.
As calculated in \cite[Section~3.1]{OM5}, 
the extension group $\Ext^1\big(K_C^{-\half}, K_C^{\half}
\big)$ is generated by the cohomology class $\frac{d}{dz}\xi_{wz} = c$,
which is the image of the line bundle $K_C^\half$ under the cohomology 
map
$$
\begin{CD}
\cdots @>>> H^1\big(C, \cO_C^*\big) @>>> H^1\big(C,K_C\big)@>>>
\cdots 
\end{CD}
$$
associated with the short exact sequence $0\lrar \bC^*\lrar 
\cO_C^*\lrar K_C\lrar 0$. The extension \eqref{ext}
is then defined by a matrix 
$\begin{bmatrix} 
\xi_{wz}&\hbar\, c\\
&\xi_{wz}^{-1}
\end{bmatrix}
$, and the $\hbar$-connection $\nabla^\hbar$ satisfies the desired transition relation
$$
\hbar \,d+\begin{bmatrix} 
&q(w)\\
1&
\end{bmatrix}dw = 
\begin{bmatrix} 
\xi_{wz}&\hbar \, c\\
&\xi_{wz}^{-1}
\end{bmatrix}
\left(\hbar \,d+\begin{bmatrix} 
&q(z)\\
1&
\end{bmatrix}dz
\right)
\begin{bmatrix} 
\xi_{wz}&\hbar \, c\\
&\xi_{wz}^{-1}
\end{bmatrix}^{-1}.
$$
\end{proof}

The precise cancellation here makes us feel miraculous. 
Definitions of a few terminologies are due here. Deligne's $\hbar$-connection
$\nabla^\hbar$ is a linear differential operator 
$\nabla^\hbar:E\lrar E\tensor K_C[[\hbar]]$ on a vector bundle $E$
that satisfies 
$$
\nabla^\hbar(fs) = s\tensor \hbar\,df + f\nabla^\hbar (s).
$$
The restriction $\nabla^\hbar\big|_{\hbar = 0}$ is a Higgs field, 
and $\nabla^\hbar\big|_{\hbar = 1}$ is a connection in the usual sense.
The extension $E^\hbar$ of \eqref{ext} comes with a natural 
$3$-term \emph{filtration}
$0\subset \cF^1 \subset E^\hbar$ that satisfies a  
condition $\cF^1 \isom \big(E^\hbar\big/\cF^1\big) \tensor K_C$. 
More generally, a connection $(\nabla, E)$ on a vector bundle
$E$  of rank $n$ is an \textbf{oper} if there is a full length
filtration
$$
0=\cF^n\subset \cF^{n-1}\subset \cF^{n-2}\subset\cdots\subset \cF^0=E
$$
that satisfies the \textbf{Griffiths transversality} 
$$
\nabla\big|_{\cF^i}: \cF^i\lrar \cF^{i-1}\tensor K_C, \quad i=1, 2, \dots, n,
$$
and the $\cO_C$-module isomorphism of graded objects
$$
\overline{\nabla}: \cF^i\big/\cF^{i+1}\overset{\sim}{\lrar}
\big(\cF^{i-1}\big/\cF^{i}\big)\tensor K_C, \quad i=1, 2, \dots, n-1,
$$
induced by the connection $\nabla$. The concept can be
extended to the case of $\hbar$-connections.

The discovery of \cite[Section~3.1]{OM5} is a full generalization
of Proposition~\ref{2oper} to the case of the Hitchin section
on $\cM\big(C,SL_n(\bC)\big)$.

\begin{thm}[\cite{OM5}, Theorem 3.8, Theorem 3.10, Theorem 3.11]
Let $C$ be a smooth projective algebraic curve of genus $g(C)>1$.
Choose a projective coordinate system on $C$, and a spin
structure $K_C^\half$. Consider the characteristic polynomial map
\eqref{charmap} on $\cM\big(C,SL_n(\bC)\big)$.
Recall that every point $p\in B$ of
the base $B$ represents a spectral curve $\Sigma\subset T^*C$. 
Then every spectral curve uniquely determines an $\hbar$-family
of opers $\big(E^\hbar, \nabla^\hbar\big)$, which corresponds to an $\hbar$-family of globally
defined differential operators $P^\hbar$ of order $n$. 
The semi-classical limit of $P^\hbar$ recovers the spectral curve 
$\Sigma$. Moreover, the map from $p\in B$ to 
the corresponding connection $\big(E^{\hbar=1}, \nabla^{\hbar=1}\big)$
is a biholomorphic map between $B$ and the moduli space
of opers in the character variety $\rchi\big(\pi_1(C),SL_n(\bC)\big)$, 
which is a complex Lagrangian.
\end{thm}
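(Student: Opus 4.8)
The plan is to break the statement into three tasks: (i) given a point $\bq\in B$, construct a canonical $\hbar$-family of opers $(E^\hbar,\nabla^\hbar)$ on $C$ together with the associated $\hbar$-family of scalar $n$-th order operators $P^\hbar$; (ii) verify that the semi-classical limit of $P^\hbar$ is the spectral curve $\Sigma\subset T^*C$ represented by $\bq$; (iii) prove that $\bq\mapsto(E^{\hbar=1},\nabla^{\hbar=1})$ is a biholomorphism from $B$ onto the oper locus inside the character variety, and that this locus is a complex Lagrangian. Tasks (ii) and (iii) are, respectively, a symbol computation and a mixture of soft arguments with known structure theory of opers; the genuine content is (i), which is the order-$n$ generalization of Proposition~\ref{2oper}.

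For (i): start from the Hitchin-section Higgs bundle $(E_0,\phi(\bq))$ of Definition~\ref{Hitchin section}, with $E_0=\bigoplus_{\ell=0}^{n-1}K_C^{-\frac{n-1}{2}}\tensor K_C^{\tensor\ell}$ and $\phi(\bq)$ as in \eqref{phi q}, and fix a projective coordinate system on $C$, so that $K_C^\half$ has transition cocycle $\xi_{wz}=cz+d$ as in Section~\ref{subsect:higherorder}. I would define $E^\hbar$ as the iterated extension of the line bundles $K_C^{-\frac{n-1}{2}+\ell}$, with extension data given by $\hbar$ times the canonical generators of $\Ext^1\bigl(K_C^{-\frac{n-1}{2}+\ell-1},K_C^{-\frac{n-1}{2}+\ell}\bigr)\isom H^1(C,K_C)\isom\bC$ produced by the projective structure --- i.e. the class of $\tfrac{d}{dz}\xi_{wz}=c$ already used in the rank-$2$ case --- so that $E^\hbar$ carries a transition matrix equal to the diagonal matrix of the relevant powers of $\xi_{wz}$ decorated above the diagonal by $\hbar$-dependent entries built from the cocycle $c$; and on a projective frame I would set
\[
\nabla^\hbar \;=\; \hbar\,d \;+\; \phi(\bq).
\]
The crucial point is that this $\nabla^\hbar$ is a globally well-defined $\hbar$-connection on $E^\hbar$: under a change of frame the powers of $cz+d$ and their $z$-derivatives produce inhomogeneous terms which are cancelled exactly by the $\hbar\,d$ term, generalizing the computation in the proof of Proposition~\ref{2oper} and the identity \eqref{Kn}. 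The filtration $\{\cF^i\}$ produced by the extensions is $\nabla^\hbar$-Griffiths-transverse with graded isomorphisms induced by the subdiagonal block $X_-$, so $(E^\hbar,\nabla^\hbar)$ is an $\hbar$-oper; it restricts to $(E_0,\phi(\bq))$ at $\hbar=0$ and to an honest $SL_n(\bC)$-connection at $\hbar=1$. The family is rigid --- the $\hbar$-scaling of the extension classes is the unique one compatible with the transition law for $\hbar\,d+\phi(\bq)$ --- which is the ``uniqueness of the quantization'' of the introduction. Writing the flatness system $\nabla^\hbar\Psi=0$ in companion form \eqref{introconn}, now with $d/dz$ replaced by $\hbar\,d/dz$, reads off the scalar operator $P^\hbar$ of the local shape \eqref{Plocal}, which by Section~\ref{subsect:higherorder} is globally defined as a map $K_C^{-\frac{n-1}{2}}\lrar K_C^{\frac{n+1}{2}}$. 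This step is the main obstacle: pinning down the iterated extension-class data that defines $E^\hbar$ and checking that the change-of-frame law closes up with the connection matrix equal to the bare Higgs matrix $\phi(\bq)$, with all projective-coordinate corrections absorbed into $\hbar\,d$, is the substantive content of Theorems~3.8 and 3.10 of \cite{OM5}.

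For (ii): since $\nabla^\hbar|_{\hbar=0}=\phi(\bq)$, the principal symbol of $P^\hbar$ --- obtained by $\hbar\,\tfrac{d}{dz}\mapsto\lambda$ --- is the characteristic polynomial of the companion matrix with entries $a_2,\dots,a_n$, and this equals $\det\bigl(\lambda I-\phi(\bq)\bigr)=\lambda^n+q_2\lambda^{n-2}+\cdots+q_n$; the last equality is the defining property of Kostant's principal three-dimensional subgroup, namely that the coefficients of the characteristic polynomial of $X_-+\sum_\ell q_\ell X_+^{\ell-1}$ are precisely $(q_2,\dots,q_n)$, which is what makes $B$ the base of the Hitchin map along the Hitchin section. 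Hence the vanishing locus of the symbol in $T^*C$ is exactly the spectral curve $\Sigma$ determined by $p=\bq$.

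For (iii): the map $\bq\mapsto(E^{\hbar=1},\nabla^{\hbar=1})$ is holomorphic because $E^\hbar$ and $\nabla^\hbar$ depend polynomially on the $q_\ell$ and linearly on $\hbar$, and its image lies in the oper locus $\mathrm{Op}_{SL_n}(C)$ inside $\MdR\approx\rchi(\pi_1(C),SL_n(\bC))$ of \eqref{nah}. It is injective: from $P^{\hbar=1}$ the full $\hbar$-family is reconstructed by the canonical $\hbar$-rescaling (the monic leading term pins it down), and taking the semi-classical limit returns $\Sigma$, hence $\bq$. It is surjective onto $\mathrm{Op}_{SL_n}(C)$: an arbitrary $SL_n$-oper has a canonical full filtration whose graded pieces, by the oper graded-isomorphism axiom together with the traceless ($SL_n$) normalization and the chosen spin structure, are forced to be the $K_C^{-\frac{n-1}{2}+\ell}$, so its underlying bundle is one of the $E^\hbar$, in a projective frame its connection is of companion type, and it therefore agrees with $P^\hbar$ for the point $\bq\in B$ read off from its symbol. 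Thus $B\to\mathrm{Op}_{SL_n}(C)$ is a bijective morphism of affine spaces of the common dimension $\dim B=(g-1)(n^2-1)$ (Riemann--Roch, \eqref{base}; on the other side $\mathrm{Op}_{SL_n}(C)$ is a torsor over $\bigoplus_{\ell\ge 2}H^0(C,K_C^{\tensor\ell})$), hence a biholomorphism. Finally, $\mathrm{Op}_{SL_n}(C)$ is a complex Lagrangian of $\rchi(\pi_1(C),SL_n(\bC))$ for the Atiyah--Bott--Goldman form: it is a holomorphic section of the de~Rham Hitchin fibration over the half-dimensional base $B$, cut out by an exact one-form, hence isotropic of half dimension; this is classical (going back to Beilinson--Drinfeld \cite{BD}) and is established in the present form in \cite{OM5}. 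By contrast with step (i), the symbol computation in (ii) is routine and the bijectivity and Lagrangian assertions in (iii) are soft once the underlying bundle of an oper has been pinned down.
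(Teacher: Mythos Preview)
The paper does not actually prove this theorem: it is stated with attribution to \cite{OM5} (Theorems 3.8, 3.10, 3.11), and the only argument the paper supplies is the rank-$2$ prototype Proposition~\ref{2oper}, followed by the remark that \cite{OM5} contains ``a full generalization'' to the Hitchin section for $SL_n(\bC)$. So there is no in-paper proof to compare against; your proposal is in effect a sketch of what that external proof should contain.

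Judged on those terms, your outline is the natural extrapolation of the rank-$2$ template the paper does give: build $E^\hbar$ as an iterated extension of the line bundles $K_C^{-\frac{n-1}{2}+\ell}$ with extension classes scaled by $\hbar$ via the projective-coordinate cocycle $c=\tfrac{d}{dz}\xi_{wz}$, set $\nabla^\hbar=\hbar\,d+\phi(\bq)$ in a projective frame, verify the transition law closes up (the rank-$n$ analogue of the displayed identity in the proof of Proposition~\ref{2oper}, driven by \eqref{Kn}), and read off Griffiths transversality from the $X_-$-block. That is exactly the mechanism the paper advertises.

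One technical correction to your step (ii): with the TDS normalization \eqref{TDS} the subdiagonal entries of $X_-$ are $\sqrt{s_i}$, not $1$, so $\phi(\bq)$ is \emph{not} literally in the companion form \eqref{introconn}, and $\det\bigl(\lambda I-\phi(\bq)\bigr)$ is \emph{not} literally $\lambda^n+q_2\lambda^{n-2}+\cdots+q_n$ (for $n=3$ one gets $\lambda^3-4q_2\lambda-4q_3$). A diagonal gauge transformation is needed to pass between the two forms, and the map from $\bq$ to the characteristic coefficients is a polynomial isomorphism rather than the identity. This does not affect any of your conclusions --- the semi-classical limit still recovers $\Sigma$ and the map $B\to\mathrm{Op}_{SL_n}(C)$ is still a biholomorphism --- but the sentence ``the coefficients \ldots\ are precisely $(q_2,\dots,q_n)$'' is false as stated and should be softened. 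Your step (iii) defers the Lagrangian property to \cite{BD} and \cite{OM5}; since the paper does the same, this is consistent with its treatment.
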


\section{In search of the geometry that knows the irrationality of 
$\zeta(3)$}
\label{zeta3}

\subsection{The Mystery Formula} 
\be
\begin{aligned}
\label{mystery}
(-1)^n (n!)^2 &\sum_{\substack{\ell+m=n\\
\ell,m\ge 0}}
\frac{(2\ell+m)! (\ell+2m)!}
{(\ell !)^5  (m!)^5}\bigg(1+(m-\ell)\big(H_{2\ell+m} 
+2H_{\ell + 2m} -5 H_m\big)\bigg)
\\
&=\sum_{\ell=0}^n\binom{n}{\ell}^2 \binom{n+\ell}{\ell}^2, \qquad 
n\ge 0.
\end{aligned}
\ee
Ap\'ery's astonishing discovery of \cite{A} announced in 1978, 
that proves
the irrationality of a special value $\zeta(3)$ of the Riemann zeta function, 
caused a huge sensation (see \cite{vdP}). It was followed by an almost
immediate, and unexpected, geometric surprise \cite{B, BP} that 
the generating function of the key integer sequence $A_n$  of 
Ap\'ery,
the second line of \eqref{mystery}, solves a 
Picard-Fuchs  differential equation 
associated with a particular family of K3 surfaces. 
Since last decade, there has been a renewed sensation
\cite{Gal, GKS, GZ, MS2024, Zagier} on this topic.
This time it is due to the identification that the  Borel-Laplace transform of the
same generating function   is  \textbf{identical} to the 
generating function of the 
genus $0$,  $1$-marked point, degree $d\ge 2$
Gromov-Witten invariants of  a 
Fano $3$-fold described below, after adjusting the exponential factor 
$e^{5t}$.
  This is the content of  the Mystery Formula \eqref{mystery}.

Following
\cite{CCGK}, let us introduce the 
Fano $3$-fold $V_{12}$:

\begin{Def}[\cite{CCGK}, p.139. Fano $3$-fold $V_{12}$]   
We consider a rank $3$ vector bundle 
$\cE=(\cU^*\tensor \det \cU^*)\dsum \det \cU^*$ defined on the
$6$-simensional
Grassmannian $Gr(2,5)$, where $\cU$ is the universal bundle.
Take  a generic section
$
\sigma
\in H^0\big(Gr(2,5), \cE)
$
of  $\cE$.  The Fano $3$-fold $V_{12}$ is the zero locus $[\sig]_0$ of this
section. The genericness condition assures  smoothness of the zero locus. 
\end{Def}

\begin{thm}[\cite{CCGK}, p.141] Let $X$ be a non-singular model
for the Fano $3$-fold $V_{12}$, and $-K_X$ its anticanonical divisor.
Define the anticanonical degree $d\ge 2$, type $(g,n)=(0,1)$, Gromov-Witten invariant of $X$ by
\be
\label{gam}
\gam_d:=\sum_{\substack{\beta\in H_2(X,\bZ)\\
d=\la \beta, -K_X\ra}}\int_{\Mbar_{0,1}(X, \b)} ev^*(pt) \psi^{d-2},
\ee
where 
\bite
\item
$\Mbar_{0,1}(X,\b)$ is the moduli stack of holomorphic maps
$f:(C,\infty)\lrar X$ from a genus $0$ curve $C$ with a marked point
$\infty\in C$ to $X$ such that the homology class of $f(C)$ is equal to $\beta$
in $H_2(X,\bZ)$;
\item $ev: \Mbar_{0,1}(X,\b)\owns f\longmapsto
f(\infty)\in X$ is the evaluation map;
\item $ev^*: H^\bullet (X,\bQ)
\lrar H^\bullet \big(\Mbar_{0,1}(X, \b),\bQ)$ is the induced cohomology 
map; and 
\item $\psi = c_1(\bL)$ is the first Chern class of the tautological 
line bundle $\pi: \bL\lrar \Mbar_{0,1}(X,\b)$
whose fiber at  $f\in \Mbar_{0,1}(X, \b)$ is $T^*_{f(\infty)} f(C)$.
\eite
Then the generating function of these type $(0,1)$ Gromov-Witten invariants 
is given by
\be
\begin{aligned}
\label{GWFano}
GW_{0,1}(X):=
\sum_{d=0}^\infty \gam_d\; t^d 
= \; e^{-5t} 
&\sum_{n=0}^\infty 
(-1)^n n! 
\sum_{\substack{\ell+m=n\\
\ell,m\ge 0}}
\frac{(2\ell+m)! (\ell+2m)!}
{(\ell !)^5  (m!)^5}
\\
\times &\bigg(1+(m-\ell)\big(H_{2\ell+m} 
+2H_{\ell + 2m} -5 H_m\big)\bigg) \;t^n.
\end{aligned}
\ee
\end{thm}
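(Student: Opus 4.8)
The plan is to follow the computation of \cite{CCGK}: realize $X$ as a tautological zero locus in a Grassmannian, transport the genus-zero Gromov--Witten theory of the ambient space to $X$ by quantum Lefschetz, and then identify the resulting one-variable hypergeometric series with the right-hand side of \eqref{GWFano}. First I would fix the ambient geometry. By construction $V_{12}=[\sig]_0\subset Gr(2,5)$ is the vanishing locus of a generic section of the globally generated bundle $\cE=(\cU^*\tensor\det\cU^*)\dsum\det\cU^*$, so $e(\cE)$ is Poincar\'e dual to $[X]$ and, by adjunction, $-K_X=\cO(1)|_X$ in the Pl\"ucker polarization; hence $X$ is a prime Fano threefold of index $1$ and Picard rank $1$, and the sum over $\beta$ in \eqref{gam} reduces to the single effective class of anticanonical degree $d$, so that $\gam_d=\int_{\Mbar_{0,1}(X,d)}ev^*(\mathrm{pt})\,\psi^{d-2}$.

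Next I would compute the small $I$-function of $Gr(2,5)$. Writing $Gr(2,5)=\bC^{2\times5}/\!\!/GL(2,\bC)$ and passing to the abelianization $(\bP^4)^2=\bC^{10}/\!\!/(\bC^*)^2$, the Abelian/non-Abelian correspondence of Bertram, Ciocan-Fontanine, Kim, and Sabbah expresses $I_{Gr(2,5)}$ as a Weyl antisymmetrization, in the two Chern roots $H_1,H_2$ of $\cU^*$, of the toric $I$-function of $(\bP^4)^2$ whose degree-$(\ell,m)$ coefficient carries the denominator $\big((\ell!)(m!)\big)^{5}$ (the exponent being exactly the $5$ of $Gr(2,5)$), modified by the $GL(2)$-root factors in $H_1-H_2$. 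Since $\cE$ is convex, quantum Lefschetz produces the twisted $I$-function of $V_{12}$ by inserting the usual hypergeometric factor built from the Chern roots of $\cE$: as $\cU^*\tensor\det\cU^*$ has Chern roots $2H_1+H_2$ and $H_1+2H_2$ and $\det\cU^*$ has class $H_1+H_2$, this is precisely where the numerator factorials $(2\ell+m)!$ and $(\ell+2m)!$ of \eqref{GWFano} originate.

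Now I would extract the quantum period. Restricting the twisted $I$-function to $X$, performing the mirror transformation (the Birkhoff factorization that normalizes $I$ into the Givental $J$-function), and reading off the component paired with the point class $[\mathrm{pt}]\in H^6(X,\bQ)$ recovers $\sum_d\gam_d\,t^d$. Because the Fano index drops from $5$ on $Gr(2,5)$ to $1$ on $X$, the mirror map carries a nontrivial linear term, and following it through the Birkhoff factorization is what manufactures the prefactor $e^{-5t}$. Finally, after specializing all divisor and equivariant parameters, carrying out the antisymmetrization over $H_1,H_2$ collapses the double summation to a single sum over $\ell+m=n$; the point-class insertion together with the antisymmetrization forces one logarithmic derivative of the $\Gamma$-ratios, and $\Gamma'/\Gamma(1+k)=H_k+\mathrm{const}$ makes the harmonic numbers appear, the constant parts cancelling. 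The fifth power $(m!)^5$ in the denominator contributes the $-5H_m$, the numerator factorials together with the antisymmetrization the $+H_{2\ell+m}+2H_{\ell+2m}$, the derivative rule supplies the overall $(m-\ell)$, and the remaining normalization assembles $(-1)^n n!$, giving \eqref{GWFano}.

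I expect this last reduction to be the main obstacle: the normalization of $I_{Gr(2,5)}$, the precise Birkhoff factorization fixing the $e^{-5t}$ shift, and the digamma bookkeeping that must deliver exactly the coefficient $2$ on $H_{\ell+2m}$ and $-5$ on $H_m$ all have to be controlled simultaneously, and any sign or factorial slip corrupts the identity. A robust independent check is to match the first coefficients $1,5,73,1445,33001,\dots$ against the regularized quantum period read off from the Picard--Fuchs equation of the K3 family of \cite{BP}, and against Ap\'ery's sequence $A_n=\sum_{\ell=0}^n\binom{n}{\ell}^2\binom{n+\ell}{\ell}^2$ through the Mystery Formula \eqref{mystery}.
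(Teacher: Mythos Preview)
The paper does not prove this theorem at all: it is stated with attribution to \cite{CCGK}, p.~141, and the only commentary that follows is a short remark about the normalizations $\gam_0=1$, $\gam_1=0$ and the role of the $e^{-5t}$ factor. There is no argument in the text to compare your proposal against.

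Your outline is a plausible reconstruction of how such a computation is carried out in \cite{CCGK}---realize $V_{12}$ as a zero locus in $Gr(2,5)$, compute the ambient $I$-function via the Abelian/non-Abelian correspondence, twist by the Euler class of $\cE$ via quantum Lefschetz, and extract the quantum period after the mirror map---and you have correctly identified the provenance of the factorials $(2\ell+m)!$, $(\ell+2m)!$, the denominator $(\ell!)^5(m!)^5$, and the harmonic numbers from digamma derivatives. But this is a sketch, not a proof: the Birkhoff factorization, the precise form of the mirror map producing exactly $e^{-5t}$, and the combinatorial reduction that yields the specific linear combination $H_{2\ell+m}+2H_{\ell+2m}-5H_m$ with the prefactor $(m-\ell)$ are all asserted rather than verified. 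If you intend to supply a genuine proof you will need to carry those steps out in full or cite the relevant theorems from \cite{CCGK} and the Abelian/non-Abelian and quantum Lefschetz literature with enough precision that the formula can be checked line by line.
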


\begin{rem}
From the expected dimension $\dim \Mbar_{0,1}(X, \b)=d+1$,  we 
normalize $\gam_0=1$ and $\gam_1=0$. Only $\gam_2$ has a direct 
geometric counting interpretation.
The exponential factor is chosen to assure $\gam_1=0$.
\end{rem}

We recall that topological recursion 
for simple Hurwitz numbers and 
higher genus Catalan numbers, as we have seen in earlier sections,
 are uniformly formulated on the
corresponding spectral curves, and the spectral curves are exactly 
the generating functions of the $(g,n)=(0,1)$-invariants. 
The role of topological recursion is to produce all $(g,n)$ invariants
from the spectral curve through a universal recursive procedure. 

From the definition (see below), the generating function of the Ap\'ery sequence 
$$
A(t) = \sum_{n=0}^\infty A_n t^n
$$
automatically satisfies a linear ODE with regular singular points. 
There is a  $3$-dimensional
 integral expression of $\zeta(3)$ making it a \textbf{period} 
 in the sense of Kontsevich-Zagier \cite{KZ},
which provides an illuminating  geometric understanding
of   Ap\'ery's proof. 
And
this integration formlula, modified with Ap\'ery sequences, 
  makes the direct passage toward the 
Gauss-Manin connection on the K3 fibration.
Right after Ap\'ery's discovery, it was pointed out that
the differential equation for $A(t)$ is identified to the Picard-Fuchs 
differential equation associated with a $1$-dimensional 
family of  K3 surfaces \cite{B, BP}, as mentioned above.

Now, researchers are wondering (e.g.,~\cite{Zagier}), isn't this pair, a Fano $3$-fold and
a family of K3 surfaces, a \emph{mirror symmetric pair}?
Our interest is in the following question, hoping that something
in the line of Section~\ref{subsect:Catalan} may come up:

\begin{quest}
\bite
\item Does Ap\'ery's $A(t)$ define a \textbf{spectral curve}?
\item If so, does the
topological recursion formulated on this spectral curve
generate all Gromov-Witten invariants of $V_{12}$ for arbitrary $(g,n)$? 
\item 
What is its quantum curve? 
\item What geometry does this 
quantum curve tell us?
\item Is the asymptotic solution  to the quantum curve,
considered as a Schr\"odinger equation,
at the irregular singular point a $\tau$-function of 
an integrable system, such as the KP equations?

\eite
\end{quest}

Nothing precise is known at this moment. The \emph{Quantum 
Differential Equation} (see \cite{I}) for the Fano variety $V_{12}$ is identified
in \cite{CCGK} (see below, Theorem~\ref{QDE}). The quantum 
curve in question is an $\hbar$-deformation family of QDE. 
This point relates to the  work \cite{DFKMMN} on Gaiotto conjecture 
\cite{Gai2014} discussed in Section~\ref{sect:var}.

\subsection{A quick review of Ap\'ery's ideas}
\label{subsect:Apery}

Ap\'ery proposed a recursion formula
\be
\label{Apery}
(n+1)^3 u_{n+1} -(34n^3+51n^2+27n+5)u_n +n^3 u_{n-1}=0, \qquad n\ge 1.
\ee
Following Ap\'ery \cite{A}, 
let us define two solutions of \eqref{Apery} and their generating functions: 
\begin{align}
\label{A}
& \{A_n\}_{n=0}^\infty\;, \qquad A_0=1, \quad A_1=5, 
\qquad A(t):=\sum_{n=0}^\infty A_n t^n, \qquad \a(t):=\sum_{n=0}^\infty 
\frac{A_n}{n!} t^n,\\
 \label{B}
&\{B_n\}_{n=0}^\infty\;, \qquad B_0=0, \quad B_1=6,
\qquad B(t):=\sum_{n=0}^\infty B_n t^n,\qquad \b(t):=\sum_{n=0}^\infty 
\frac{B_n}{n!} t^n\;.
\end{align}
The space of solutions of \eqref{Apery} has dimension $2$, spanned by 
$ \{A_n\}_{n=0}^\infty$
and  $\{B_n\}_{n=0}^\infty$. First few terms of $B_n$ are:
$0,6,\frac{351}{4},\frac{62531}{36}, \frac{11424695}{288},
\frac{35441662103}{36000},\frac{20637706271}{800}, 
\frac{963652602684713}{1372000},\dots.$ 
Closed formulas for Ap\'ery's  sequences are established:
\begin{align}
\label{An}
A_n &=\sum_{\ell=0}^n\binom{n}{\ell}^2 \binom{n+\ell}{\ell}^2, \quad n\ge 0,
\\
\label{Bn}
B_n &=\sum_{\ell=0}^n\binom{n}{\ell}^2 \binom{n+\ell}{\ell}^2
\left(\sum_{m=1}^n\frac{1}{m^3} 
- \sum_{m=1}^\ell (-1)^m\frac{1}{2m^3 \binom{n}{m}\binom{n+m}{m}}
\right), \quad n\ge 1.
\end{align}
Partial sums of $\zeta(3)=\sum_{m=1}^\infty\frac{1}{m^3}$ are appearing
here.
Notice  that $\{A_n\}$ is a natural basis for the space of solutions
because it is unique, up to a constant multiplication, that satisfies
\eqref{Apery} for \textbf{all} $n\ge 0$. 

Re-interpreting the result of  \cite{CCGK, GZ, Zagier}, 
the Mystery Formula \eqref{mystery} has a geometric
expression:
\be
\label{a=GW}
\a(t) = e^{5t}\cdot GW_{0,1}(X).
\ee
Notice that $GW_{0,1}(X)$ does not have a linear term in $t$, while 
$A_1=5$. This requires the adjustment of $e^{5t}$ in \eqref{a=GW}.
Some of  these amazing formulas are proved (relatively) easily
through differential equations established in \cite{B, BP,CCGK,GZ}.
Direct calculations remain to be very hard.

To see how fast the sequence determined by \eqref{Apery} grows or decreases, 
put $u_n \sim n^\a C^n$ with  constants $C$ and $\a$.
For large $n$, we then have
$$
\frac{(n+1)^{3+\a}}{n^{3+\a}}  C^2
-\left(34+\frac{51}{n}\right)C + \frac{(n-1)^\a}{n^\a} \sim 0.
$$
At $n\rar \infty$ we find  $C^2-34C+1=0$. Define $C=17+12\sqrt{2}$. 
Then the other solution of this quadratic equation is $C^{-1}$. 
Since $C^2=34C-1$, the sub order terms give us
$$
\frac{3+\a}{n}C^2 -\frac{51}{n}C -\frac{\a}{n}\sim 0
\Longrightarrow \a = -\frac{51C-3}{34C-2}=-\frac{3}{2}.
$$
Therefore, every solution of \eqref{Apery} has a growth behavior
\be
\label{growth}
u_n \sim \const\; n^{-\frac{3}{2}} \; C^{\pm n}.
\ee
In particular, both $A_n$ and $B_n$ grow 
exponentially fast, $\sim n^{-\frac{3}{2}}C^n$,  as $n\rar\infty$.
Since the space of solutions has dimension $2$, 
there is a unique solution, again up to a constant multiple, 
that decays exponentially. This means there exists a
unique constant $\zeta\in \bR$ such that 
$$
\lim_{n\rar \infty} \big(A_n\zeta-B_n\big) = 0.
$$
Since $\{A_n\}$ is unique up to constant multiple, and $\{B_n\}$ is canonical
up to constant multiple
by choosing $B_0=0$ to 
make it linearly independent with $\{A_n\}$, the sequence \eqref{Apery}
\emph{knows} this constant $\zeta$.

The other surprise, the relation to the Gauss-Manin connection
of \cite{BP}, starts with 
the integral expression
$$
\zeta(3) = \frac{1}{2}\int_0^1\int_0^1\int_0^1 \frac{dxdydz}{1-z+xyz}.
$$
Then it is proved that
\be
\label{B/A}
A_n\zeta(3) -B_n = \frac{1}{2}\int_0^1\int_0^1\int_0^1
\left(\frac{xyz(1-x)(1-y)(1-z)}{1-z+xyz}\right)^n
 \frac{dxdydz}{1-z+xyz}. 
\ee
These integral expressions show that $A_n\zeta(3) -B_n$ is a
\emph{period} of \cite{KZ} for every $n\ge 0$, that includes $\zeta(3)$. 
Since
$$
0< \left(\frac{xyz(1-x)(1-y)(1-z)}{1-z+xyz}\right)^n<1
$$
in the domain of integration $(x,y,z)\in (0,1)^3$, we immediately see that 
$$
\lim_{n\rar \infty} \big(A_n\zeta(3)-B_n\big) = 0.
$$
The mysterious constant $\zeta$ is indeed $\zeta(3)$. 
We know  $A_n$ and $B_n$ grow exponentially
fast, while the linear combination $A_n\zeta(3)-B_n$
converges to $0$. It is also a solution of the 
recursion \eqref{Apery}. Therefore, it decreases 
exponentially fast!
\be
\label{z(3)estimate}
A_n\zeta(3)-B_n \sim \const \; n^{-\frac{3}{2}}C^{-n}
\Longrightarrow \left|\zeta(3)-\frac{B_n}{A_n}\right|\sim \const \; C^{-2n},
\quad n\rar \infty.
\ee

Let us recall a criterion for irrationality of a real number.
\begin{lem}
\label{irr}
A positive real number $r>0$ is irrational if there is an $\epsilon>0$
such that  infinitely many relatively prime 
positive pairs of integers $p,q$ satisfy 
$$
0<\left|r-\frac{p}{q}\right|< \frac{1}{q^{1+\e}}.
$$
\end{lem}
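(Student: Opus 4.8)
The plan is to argue by contraposition: I will assume $r$ is rational and show that the Diophantine inequality in the hypothesis can then be satisfied by only finitely many coprime pairs $(p,q)$, so the existence of infinitely many such pairs forces $r$ to be irrational. Thus suppose $r = a/b$ with $a,b$ positive integers.

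The first step is an elementary lower bound for the distance from $r$ to a distinct rational. For positive integers $p,q$ with $p/q \ne r$,
\[
\left| r - \frac{p}{q}\right| = \frac{|aq-bp|}{bq} \ge \frac{1}{bq},
\]
because $aq - bp$ is a nonzero integer. Coprimality of $p,q$ is not used here; what matters is that $0 < |r - p/q|$, which is guaranteed by the strict left-hand inequality in the hypothesis.

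The second step combines this with the hypothesis. If $(p,q)$ satisfies $0 < |r - p/q| < q^{-(1+\epsilon)}$, then $p/q \ne r$, so the bound above gives $1/(bq) < q^{-(1+\epsilon)}$, that is $q^{\epsilon} < b$, hence $q < b^{1/\epsilon}$. Thus every denominator occurring among the admissible pairs lies in the finite set $\{1,2,\dots,\lceil b^{1/\epsilon}\rceil\}$.

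The third step is a counting argument: for each fixed $q$, the condition $0 < |r - p/q| < q^{-(1+\epsilon)}$ confines the numerator $p$ to the open interval $(rq - q^{-\epsilon},\, rq + q^{-\epsilon})$ of length $2q^{-\epsilon} \le 2$, which contains at most two integers. Since $q$ ranges over a finite set, there are only finitely many admissible pairs $(p,q)$ in all, contradicting the assumption that infinitely many exist. I do not expect any genuine obstacle here; the only point requiring care is that the nonzero-integer lower bound in the first step is legitimate precisely because the hypothesis asserts $|r - p/q|$ is strictly positive.
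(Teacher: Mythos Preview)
Your proof is correct and follows essentially the same contrapositive argument as the paper: assume $r=a/b$, use the nonzero-integer lower bound $|r-p/q|\ge 1/(bq)$ together with the hypothesis to force $q<b^{1/\epsilon}$, and conclude finiteness. Your third step makes the numerator count explicit where the paper simply asserts finiteness, but this is the same proof.
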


This means if a real number is approximated too well by a
sequence of rationals, then it cannot be rational. 

\begin{proof}
Suppose $r = a/b$ with integers $a,b>0$. Then for any
pair of integers $p,q>0$ such that $r\ne p/q$, we have
$$
\frac{1}{q^{1+\e}} > \left|r-\frac{p}{q}\right| = 
\left|\frac{a}{b}-\frac{p}{q}\right| =\left|\frac{aq-bp}{bq}\right|
\ge \frac{1}{bq}, \quad \text{hence}\quad 0<q<b^{1/\e}.
$$
There are only a finite number of such integers. 
\end{proof}

To appeal to this Lemma, Ap\'ery estimated the denominator of $B_n$. 
Let 
$$
d_n:= \lcm[1,2,3,\dots,n].$$
 He found that  $d_n^3 B_n\in \bZ$, which follows from \eqref{B}.
The recursion \eqref{Apery}
suggests that the denominator of a solution $u_n$ starting with 
integral initial values $u_0, u_1\in \bZ$ is about $(n!)^3$. 
The denominator of $B_{30}$ is  $\sim 1.9\times 10^{33}$, while
$\lcm[1,\dots,30]^3$ divided by this denominator is $6630$. 
So the above estimate is quite accurate. 

\begin{rem}
Of course what is truly astonishing is that the sequence $A_n$ is an 
\textbf{integer}
sequence. Zagier \cite{Zagier} (see also \cite{KZ}) discusses how special and
rare such a  phenomenon is. 
\end{rem}

Now we know that the denominator of the rational number 
${B_n}/{A_n}$ is estimated to be 
$$
d_n^3A_n \sim  n^{-\frac{3}{2}}d_n^3 C^n.
$$
Does $\zeta(3)$ 
satisfy the criterion of Lemma~\ref{irr}? 
The Prime Number Theorem tells us the following: 
\begin{align*}
d_n = \lcm[1,2,\dots,n] &= \prod_{\substack{p: \text{ prime}\\
2\le p \le n}} p^{\left\lfloor\frac{\log(n)}{\log(p)}\right\rfloor}
\lesssim \prod_{\substack{p: \text{ prime}\\
2\le p \le n}} p^{\frac{\log(n)}{\log(p)}}
= \prod_{\substack{p: \text{ prime}\\
2\le p \le n}}e^{\log(n)} \\
&= n^{\pi(n)}\gtrsim n^\frac{n}{\log(n)}= e^{n}.
\end{align*}
Since the asymptotic inequalities are in the opposite directions,
the above estimate is  crude. However, since $C=17+12\sqrt{2}=33.9705\dots$ 
is large enough compaired to $e^3
=20.0855\dots$,
we have
$C^n\gg n^{-\frac{3}{2}}d_n^3$ for very large $n$.
In particular, there is  an $\e>0$ such that 
$$
\left|\zeta(3)-\frac{B_n}{A_n}\right|\sim \const \; C^{-2n} <
\frac{1}{\left(n^{-\frac{3}{2}}d_n^3 C^n\right)^{1+\e}}, \qquad n\gg 1.
$$
This completes the proof of the irrationality of  $\zeta(3)$.\hfill$\square$

\subsection{The differential equations behind the scene}
We start with two linear differential equations,
\begin{align}
\label{P}
&Pu=0, \qquad P= D^3-t(34D^3+51D^2+27D +5)+t^2(D+1)^3\\
\label{Q}
&Qu=0, \qquad Q=D^4-t(34D^3+51D^2+27D +5)+t^2(D+1)^2,
\end{align}
where $D=t \frac{d}{dt}$, and $t\in \bP^1$. 
To analyze these operators, let us first find solutions to the equation
$Pu(t)=0$. Put $u(t)=\sum_{n=0}^\infty u_n t^n$. Since
$Dt^n=nt^n$,   comparing the terms of $t^{n+1}$,
we find  
the same $3$-term recursion equation of Ap\'ery \eqref{Apery}
$$
(n+1)^3 u_{n+1} -(34n^3+51n^2+27n+5)u_n +n^3 u_{n-1}=0, \qquad n\ge 0.
$$
We notice here that $Pu=0$ for a formal
power series implies \eqref{Apery} for \emph{all} $n\ge 0$, 
not just $n\ge 1$ as in \eqref{Apery}. Therefore, a solution to \eqref{Apery} \emph{does not}
always give a solution of $Pu=0$. The recursion relation has two 
linearly independent solutions determined by the initial values
$u_0$ and $u_1$. The differential equation $Pu=0$ imposes another 
condition $u_1=5u_0$, hence the space of  solutions of \eqref{P}
analytic at the origin is $1$-dimensional.

Thus we find that the generating functions of  \eqref{A} and  \eqref{B} satisfy
\be
\label{ABdiff}
\begin{cases}
PA(t)=0
\\ Q\a(t)=0,
\end{cases} \qquad
\begin{cases}
PB(t)=6t
\\ Q\b(t)=6t.
\end{cases}
\ee
We are considering this pair of differential equations
to 
correspond to the two differential equations \eqref{xzdiff} and \eqref{qxz},
since the first equation has only regular singular points, while the second 
one has only one  irregular singular point at infinity and another regular singularity.
For these singularity analysis, it is useful to rewrite the operators in
terms of the usual expression:
$$
P=  \big(t^5-34t^4+t^3\big)\left(\frac{d}{dt}\right)^3 +
\big(6t^4-153t^3+3t^2\big)\left(\frac{d}{dt}\right)^2+
\big(7t^3-112t^2+t\big)\frac{d}{dt}
+t^2-5t,
$$
\begin{align*}
Q=t^4\left(\frac{d}{dt}\right)^4+
\big(-34t^4+6t^3\big)\left(\frac{d}{dt}\right)^3 &+
\big(t^4-153t^3+7t^2\big)\left(\frac{d}{dt}\right)^2
\\
& +
\big(3t^3-112t^2+t\big)\frac{d}{dt}
+t^2-5t.
\end{align*}
It is easy to see that Eqn.\eqref{Q} has the unique regular singular
point at $t=0$, and the unique irregular singular point at $t=\infty$. 
Its solution $\a(t)$ is the unique entire solution up to a constant factor. 
With a little calculation we  find that Eqn.\eqref{P} has 
regular singular points at $\{0, C^{-1}, C, \infty\}$, 
and no other singular points.

\begin{prob}
Find the $\hbar$-deformation family of $Q$, and 
identify the spectral curve through the semi-classical limit of this family. 
This spectral curve should be our \textbf{hidden curve}. 
\end{prob}

The growth order \eqref{growth} determines
the radius of  convergence of $A(t)$ and $B(t)$, which is
$C^{-1}=17-12\sqrt{2}$.
Both $A(t), B(t)$ and the linear combination $A(t)\zeta(3)-B(t)$
solve
\be
\label{D-1}
(D-1)Pu(t)=0,
\ee
and the radius of convergence of $A(t)\zeta(3)-B(t)$ is $C$.

\begin{thm}[\cite{CCGK}, p.141. Quantum Differential Equation]
The generating function $GW_{0,1}(X)$ of the type $(0,1)$ Gromov-Witten
invariants of $X$, a smooth model of Fano $3$-fold $V_{12}$, 
atisfies a \textbf{Quantum Differential Equation}.
\end{thm}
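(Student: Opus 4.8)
The plan is to obtain the quantum differential equation as a formal consequence of the flatness of the Dubrovin (quantum) connection of $X$, and then to pin down the actual operator by computing the small quantum cohomology of $X$ from its realization inside a Grassmannian, following \cite{CCGK}.

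First I would set up the small quantum cohomology ring. On $QH^*(X)=H^*(X,\bQ)\tensor\bQ[[Q]]$, with $Q$ the Novikov variable, the quantum product $\star$ is defined by the genus-zero three-pointed Gromov--Witten invariants of $X$. Since $X$ is Fano, in each fixed cohomological degree only finitely many curve classes contribute, so $\star$ is a genuine deformation of the cup product and $QH^*(X)$ is a commutative, associative, Frobenius algebra; associativity is precisely the WDVV equation. Because the Picard rank of $V_{12}$ is $1$ and $-K_X$ generates $\Pic(X)$, the divisor axiom lets me replace $Q$ by a single parameter $t$, and quantum multiplication by the ample generator $p\in H^2(X)$ becomes multiplication by an explicit power series in $t$.

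Next I would introduce the quantum connection $\nabla^z = d-\frac{1}{z}\,(p\,\star\,)\,\frac{dt}{t}$ on the trivial bundle with fibre $H^*(X)$ over a punctured disc, and recall that its flatness for all $z$ is equivalent to WDVV together with the string and divisor equations, hence holds for $X$. The small $J$-function $J_X(t,z)$, assembled from the one-pointed descendant invariants $\langle ev^*(\gamma)\,\psi^k\rangle_{0,1,\beta}$, is --- after the standard normalization --- a flat section of $\nabla^z$, so each of its components is annihilated by the linear system associated with $\nabla^z$; since $\rho(X)=1$ this system collapses to a single scalar ODE in $D=t\,\frac{d}{dt}$ of order $\sum_k \dim H^{2k}(X)=1+1+1+1=4$. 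Pairing $J_X$ with the point class and extracting the appropriate power of $z^{-1}$ recovers exactly the series $\sum_d \gamma_d t^d$ of \eqref{gam}, up to the normalizing factor $e^{5t}$ dictated by $\gamma_1=0$; concretely this gives $\alpha(t)=e^{5t}\,GW_{0,1}(X)$ as in \eqref{a=GW}, so $GW_{0,1}(X)$ is annihilated by the conjugated fourth-order operator $e^{-5t}Q\,e^{5t}$, where $Q$ is the operator of \eqref{Q}. This $Q$ (equivalently $e^{-5t}Q\,e^{5t}$) is, by definition, the \textbf{quantum differential equation} of $X$.

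The one genuinely hard step is the explicit evaluation of the structure constants of $\star$ for $V_{12}$, which is the content of \cite{CCGK}. Concretely one realizes $V_{12}$ as the zero locus of a generic section of $\cE=(\cU^*\tensor\det\cU^*)\dsum\det\cU^*$ on $Gr(2,5)$, applies quantum Lefschetz to express the twisted $I$-function of $V_{12}$ in terms of the $I$-function of $Gr(2,5)$, and uses the abelian/nonabelian correspondence to compute the latter from the much simpler $I$-function of $(\bP^4)^2$. Then one must run the mirror map back to the quantum period and check that the resulting hypergeometric-type series equals $e^{5t}\,GW_{0,1}(X)$; this last bookkeeping --- tracking the $e^{5t}$ twist, the $\gamma_0=1,\gamma_1=0$ normalization, and the change from $I$- to $J$-function --- is where all the real labor sits. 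A secondary technical point is that the quantum period is a priori only a formal power series, so one should either invoke Fano convergence directly or pass to the series $\alpha(t)$, whose defining ODE $Q\alpha=0$ has (as recorded after \eqref{ABdiff}) its only irregular singular point at $\infty$ and a regular one at $0$, making the analytic statement unambiguous.
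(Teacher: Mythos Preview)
The paper does not actually prove this theorem: it is stated as a citation from \cite{CCGK}, and the explicit operator is then read off by invoking the identity $GW_{0,1}(X)=e^{-5t}\alpha(t)$ from \cite{GZ,Zagier} together with the already-established equation $Q\alpha(t)=0$ from \eqref{ABdiff}, yielding \eqref{QDE} directly. There is no argument given beyond these two citations.

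Your proposal, by contrast, sketches an honest proof along the lines of \cite{CCGK}: you derive the existence of a fourth-order ODE from the flatness of the Dubrovin connection and the rank-$4$ cohomology of a Picard-rank-$1$ Fano threefold, and you correctly identify the computational core as quantum Lefschetz plus the abelian/nonabelian correspondence for the section of $\cE$ on $Gr(2,5)$. This is more than the paper attempts, and the outline is sound. One small caution: the order of the scalar ODE satisfied by a particular component of $J_X$ need not equal $\dim H^*(X)$ in general---it could in principle drop if that component happens to lie in a proper $D$-submodule---so strictly speaking you are asserting that the quantum $D$-module of $V_{12}$ is cyclic with the point class as generator; this is true here and implicit in \cite{CCGK}, but it is an extra fact rather than an automatic consequence of flatness.
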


The state-of-the-art result \cite{GZ, Zagier} tells us that
$$
GW_{0,1}(X) = e^{-5t}\; \a(t), 
$$
and hence the  Quantum Differential Equation is
\be
\label{QDE}
\big(e^{-5t}\circ Q\circ e^{5t}\big) \;GW_{0,1}(X) = 0.
\ee

If the correspondence between opers and spectral curves work
for irregular singular differential operators, then $e^{-5t}\circ Q\circ e^{5t}$
should have a canonical $\hbar$-deformation family, and its
semi-classical limit should be the spectral curve corresponding to
$A(t)$.

Summing up the integral expression \eqref{B/A} with $t^n$
for all $n\ge 0$, 
we obtain
\be
\begin{aligned}
\label{period}
A(t)\zeta(3)-B(t) &=  
 \frac{1}{2}\int_0^1\int_0^1\int_0^1
 \sum_{n=0}^\infty
\left(\frac{xyz(1-x)(1-y)(1-z)}{1-z+xyz}\right)^n t^n
 \frac{dxdydz}{1-z+xyz}\\
 &=
  \frac{1}{2}\int_0^1\int_0^1\int_0^1
  \frac{dxdydz}{1-z+xyz-txyz(1-x)(1-y)(1-z)}.
 \end{aligned}
\ee
Beukers and Peters \cite{BP} considered the family of algebraic varieties
defined in $\bC^3$ by the equation
$$
1-z+xyz-txyz(1-x)(1-y)(1-z) = 0
$$
as a parameter $t\in \bP^1$ moves. They noticed that this is a 
family of K3 surfaces in $\bP^3$ after projectivize the formula
and blow-up all singularities. The family degenerates at
$t\in \{0, C, C^{- 1}, \infty\}$.

\section{Concluding Remark}

Our observation is that the generating function of all $g=0$, $n=1$ invariants
for a particular counting problem,
such as Catalan numbers or tree counts as discussed earlier, determines
a spectral curve. In terms of complex analysis over the spectral curve, 
we can generate all $(g,n)$ invariants through a recursive mechanism.
The $(0,1)$ invariants themselves satisfy a recursion, which 
is used to determine the analytic formula for the spectral curve.
And in terms of the rightly chosen  coordinate of the spectral curve,
the generating functions of the $(g,n)$ invariants turn out to be polynomials.
The asymptotic behavior of these polynomials then recover 
the Witten-Kontsevich formula, Virasoro constraint conditions,
and particular counting problems including the $\lam_g$-formula
and the Euler characteristic of $\cM_{g,n}$.

The fact that such mechanism exists makes the starting counting problem
very special. The method presented in these lectures are not meant to be
a subject for \emph{generalization}. Rather, it tells us how \emph{special}
these problems are, and this explains why they have 
deep connections with 
integrable systems, representation of Virasoro algebras,  moduli spaces of 
algebraic curves,  and so on.

Ap\'ery's recursions are in the sense extremely special. Again, they are not
a subject for  generalization. We are led to wondering how and why they
are so special. Once we understand these, we may be able to transplant 
the geometric situation to other equally special cases, and may obtain a new
arithmetic result. 

The Ap\'ery recursions for $\zeta(2)$ and $\zeta(3)$ 
are indeed recursions for the $(0,1)$ case of 
Gromov-Witten invariants of very special target manifolds. As such, the geometric 
origin of these recursions come from degenerations of rational curves
on these particular target spaces. In this way we \emph{understand} the 
origin of the Ap\'ery recursions from symplectic geometry. Its mirror partner
is the Gauss-Manin connection found in this  context, which
generate infinite sequence of \emph{periods}. The speed of the 
asymptotic behavior of these sequences translates into the irrationality of
$\zeta(2)$ and $\zeta(3)$.

It reminds us of the new proof of the Lawrence conjecture \cite{Lawrence}
due to Habiro,
on the integrality of certain $3$-manifold invariants over the ring generated by 
a root of unity  using his powerful \emph{universal} invariants
\cite{Habiro}. When considered for more general link invariants, one 
understands  the amazing \emph{symplectic nature} of the counting problem
(see for example, \cite{Cristina}). The role that the universal invariants 
play, in particular, in the context of arithmeticity \cite{GSWZ}, suggests
the parallel mirror symmetric point of view in these developments.

$$
\xymatrix{&\boxed{\text{Periods}}\ar[dr] \ar[dl]
\\
\boxed{\text{Symplectic Counting}}
&\ar[l]\boxed{\text{Mirror Symmetry}}\ar[r]&\boxed{
\text{Connections on Families}}	
\\
&\ar[ul]\boxed{\text{Quantization}}\ar[ur]
}
$$


\begin{appendix}

\section{The Lagrange Inversion Formula}

In  Appendix we give a brief proof of the
Lagrange Inversion Formula. For more 
detail, we refer to \cite{WW1927}.

\begin{thm}
Let $x=f(y)$ be a holomorphic function in $y$
defined on a neighborhood of $y=b$. Let $f(b) = a$,
and suppose $f'(b)\ne 0$. Then the inverse function
$y=y(x)$ is given by the following expansion
near $x=a$:
\begin{equation}
\label{eq:LIT}
y-b = \sum_{k=1} ^\infty
\frac{d^{k-1}}{dy^{k-1}}
\left.
\left(
\frac{y-b}{f(y)-a}
\right)^k
\right|_{y=b}
\frac{(x-a)^k}{k!}.
\end{equation}
\end{thm}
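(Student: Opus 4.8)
The plan is to prove the Lagrange Inversion Formula by a residue computation, which is the cleanest route and makes the normalization $f(b)=a$, $f'(b)\ne 0$ transparent. First I would reduce to the normalized case: replace $y$ by $y-b$ and $x$ by $x-a$, so that without loss of generality $b=0$, $a=0$, $f(0)=0$, $f'(0)\ne 0$. Write $f(y) = y/\phi(y)$ where $\phi(y) := y/f(y)$ is holomorphic and nonvanishing near $y=0$ (this uses $f'(0)\ne 0$, which guarantees $f(y)$ has a simple zero at the origin, so the quotient extends holomorphically with $\phi(0) = 1/f'(0) \ne 0$). Then the equation $x = f(y)$ becomes $x = y/\phi(y)$, i.e. $y = x\,\phi(y)$, which is exactly the fixed-point form to which Theorem~\ref{thm:L} applies, and the target identity \eqref{eq:LIT} becomes $y = \sum_{k\ge 1} \big[\frac{d^{k-1}}{dy^{k-1}}\phi(y)^k\big]_{y=0}\frac{x^k}{k!}$.

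Next I would establish that $y = y(x)$, the branch of the inverse with $y(0)=0$, is holomorphic in a neighborhood of $x=0$ (inverse function theorem, since $f'(0)\ne 0$), and extract its Taylor coefficients by the Cauchy integral formula: for $k \ge 1$,
\[
[x^k]\,y(x) = \frac{1}{2\pi i}\oint \frac{y(x)}{x^{k+1}}\,dx,
\]
the contour being a small circle around $x=0$. Now change variables via $x = f(y)$, so $dx = f'(y)\,dy$ and the $x$-contour pulls back to a small circle around $y=0$ (for $|x|$ small the correspondence is biholomorphic). This gives
\[
[x^k]\,y(x) = \frac{1}{2\pi i}\oint \frac{y\,f'(y)}{f(y)^{k+1}}\,dy.
\]
Then I would integrate by parts in $y$: write $\frac{f'(y)}{f(y)^{k+1}}\,dy = -\frac{1}{k}\,d\big(f(y)^{-k}\big)$, so the integral becomes $\frac{1}{2\pi i k}\oint f(y)^{-k}\,dy = \frac{1}{2\pi i k}\oint \frac{\phi(y)^k}{y^k}\,dy$, where I substitute $f(y) = y/\phi(y)$. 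Recognizing this last contour integral as a residue at $y=0$ of $\phi(y)^k/y^k$, it equals $\frac{1}{k}\cdot\frac{1}{(k-1)!}\big[\frac{d^{k-1}}{dy^{k-1}}\phi(y)^k\big]_{y=0} = \frac{1}{k!}\big[\frac{d^{k-1}}{dy^{k-1}}\phi(y)^k\big]_{y=0}$, which is precisely the claimed coefficient.

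The main obstacle, such as it is, is bookkeeping rather than conceptual: one must be careful that all contours lie in the common region of holomorphy and that the change of variables $x = f(y)$ is genuinely invertible there, and one should either argue convergence of the resulting power series directly or note that it follows automatically since $y(x)$ is holomorphic. A secondary subtlety is the integration-by-parts step when $k$ and the pole orders interact — but since no boundary terms arise on a closed contour, this is harmless. I would close by remarking that the specialization $f(y) = e^{y-1}$ (equivalently $\phi(y) = y e^{1-y}$, up to the reparametrization used in the text) recovers the Lambert-curve inversion \eqref{yHur} used in Section~\ref{sect:Hurwitz}, which is the only application needed in these lectures.
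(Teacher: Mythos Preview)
Your argument is correct: the reduction to $a=b=0$, the coefficient extraction via Cauchy's formula, the change of variables $x=f(y)$, the integration by parts $\oint y\,d(f^{-k})=-\oint f^{-k}\,dy$, and the final residue identification are all sound, and the convergence issue is indeed handled automatically by the holomorphy of $y(x)$. The paper, however, organizes the computation differently: rather than extracting each Taylor coefficient of $y(x)$ directly, it writes $y-b=\int_b^y 1\,ds$, represents the integrand via the identity $\frac{1}{f'(s)}=\frac{1}{2\pi i}\oint\frac{dt}{f(t)-f(s)}$, expands $\frac{1}{f(t)-f(s)}$ as a geometric series in $\frac{f(s)-a}{f(t)-a}$, and integrates term by term in $s$ before applying the Cauchy formula in $t$. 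Your route is the standard residue proof and is shorter and more modular (each coefficient is computed independently); the paper's route has the virtue of producing the entire series at once and making visible why the expansion is naturally in powers of $f(y)-a=x-a$. One small slip in your closing remark: in your own notation it is $\phi(y)=e^{y-1}$ and $f(y)=ye^{1-y}$, not the other way around---the paper's Theorem~\ref{thm:L} uses the letter $f$ for what you call $\phi$, which is likely the source of the mix-up.
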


\begin{proof}
Let us recall the Cauchy integration formula
$$
\phi(s) = \frac{1}{2\pi i} \oint \frac{\phi(t)dt}{t-s},
$$
where $\phi(t)$ is a holomorphic function 
defined on a neighborhood of $t=s$, and 
the integration contour is a small simple loop 
inside this neighborhood
counterclockwisely rotating around the point $s$.
Since $x=f(y)$ is one-to-one near $y=b$, 
for a point $s$ close to $b$, we have
\begin{align*}
\frac{1}{f'(s)} 
&= \frac{1}{f'\big(f^{-1}(f(s)\big)}
\\
&=
\frac{1}{2\pi i}\oint \frac{df(t)}
{f'\big(f^{-1}(f(t)\big)\big(f(t)-f(s)\big)}
\\
&=
\frac{1}{2\pi i}\oint \frac{f'(t)dt}
{f'(t)\big(f(t)-f(s)\big)}
\\
&=
\frac{1}{2\pi i}\oint \frac{dt}
{f(t)-f(s)}.
\end{align*}
Therefore, assuming that $s$ is close enough to $b$,
we compute
\begin{align*}
y-b
&=
\int_b ^y 1\cdot ds 
=
\int_b ^y  
\left(\frac{1}{2\pi i}\oint \frac{f'(s)dt}
{f(t)-f(s)}
\right)
ds
\\
&=
\int_b ^y  
\left(\frac{1}{2\pi i}\oint \frac{f'(s)dt}
{\big(f(t)-a\big)-\big(f(s)-a\big)}
\right)
ds
\\
&=
\frac{1}{2\pi i}\int_b ^y\oint 
\frac{\frac{f'(s)}{f(t)-a}}
{1-\frac{f(s)-a}{f(t)-a}}
dtds
\\
&=
\frac{1}{2\pi i}\int_b ^y
\sum_{n=0} ^\infty
\oint
\frac{f'(s)}{f(t)-a}
\left(
\frac{f(s)-a}{f(t)-a}
\right)^n
dsdt
\\
&=
\frac{1}{2\pi i}\int_{f(b)} ^{f(y)}
\sum_{n=0} ^\infty
\oint
\frac{1}{f(t)-a}
\left(
\frac{f(s)-a}{f(t)-a}
\right)^n
df(s)dt
\\
&=
\frac{1}{2\pi i}\sum_{n=0} ^\infty
\oint
\frac{1}{\big(f(t)-a\big)^{n+1}}\cdot
\frac{\big(f(y)-a\big)^{n+1}}{n+1}dt
\\
&=
\frac{1}{2\pi i}\sum_{k=1} ^\infty
\oint
\frac{dt}{\big(f(t)-a\big)^k}
\cdot \frac{(x-a)^k}{k}
\\
&=
\frac{1}{2\pi i}\sum_{k=1} ^\infty
\oint
\frac{1}{(y-b)^k}
\left(
\frac{y-b}
{f(y)-a}
\right)^k
dy
\cdot \frac{(x-a)^k}{k}
\\
&=
\sum_{k=1} ^\infty
\left.
\frac{d^{k-1}}{dy^{k-1}}
\left(
\frac{y-b}
{f(y)-a}
\right)^k
\right|_{y=b}
\cdot \frac{(x-a)^k}{k(k-1)!}.
\end{align*}
\end{proof}

The following formula is a straightforward
application of the above
Lagrange Inversion Theorem.

\begin{cor} Let $f(y)$ be a holomorphic function
defined in a neighborhood of $y=0$. If $f(0)\ne 0$,
then the inverse function of 
$$
x=\frac{y}{f(y)} 
$$
 is given by
\begin{equation}
\label{eq:lif}
y=\sum_{k=1} ^\infty
\left.
\frac{d^{k-1}}{dy^{k-1}}
\big(f(y)\big)^k
\right|_{y=0}
\frac{x^k}{k!}.
\end{equation}
\end{cor}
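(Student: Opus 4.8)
The plan is to deduce the Corollary directly from the Lagrange Inversion Theorem \eqref{eq:LIT}, applied with a suitable choice of base point and target function. Set $F(y):=y/f(y)$; this is precisely the function $x=x(y)$ whose inverse we must expand. The first step is to check that $F$ meets the hypotheses of the preceding Theorem with $b=0$ and $a=0$. Since $f$ is holomorphic near $y=0$ and $f(0)\ne 0$, the quotient $F(y)=y/f(y)$ is holomorphic on a (possibly smaller) neighborhood of $y=0$; moreover $F(0)=0$, and
$$
F'(0)=\left.\frac{f(y)-y f'(y)}{f(y)^2}\right|_{y=0}=\frac{1}{f(0)}\ne 0 .
$$
Thus the Theorem applies to the equation $x=F(y)$ near $y=0$.

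Next I would simply invoke the conclusion of the Theorem. Applying \eqref{eq:LIT} with $b=0$, $a=0$, and $F$ in place of the Theorem's function $f$, one obtains
$$
y=\sum_{k=1}^\infty\left.\frac{d^{k-1}}{dy^{k-1}}\left(\frac{y}{F(y)}\right)^{k}\right|_{y=0}\frac{x^k}{k!}.
$$
The final step is the elementary simplification of the ``kernel'': for $y$ near $0$ (where $f(0)\ne 0$) we have $\dfrac{y}{F(y)}=\dfrac{y}{\,y/f(y)\,}=f(y)$, hence $\big(y/F(y)\big)^{k}=\big(f(y)\big)^{k}$. Substituting this identity into the displayed formula yields exactly \eqref{eq:lif}, which completes the proof.

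There is essentially no obstacle here, consistent with the statement being a ``straightforward application'': the only points that require any care are verifying holomorphy of $y/f(y)$ near the origin together with $F'(0)\ne 0$, and observing that the kernel $y/F(y)$ occurring in \eqref{eq:LIT} collapses to $f(y)$ for this particular $F$. As in the Theorem, the resulting identity is an identity of power series in $x$ that converges on a neighborhood of $x=0$, and no separate convergence argument is needed beyond what is already contained in the proof of \eqref{eq:LIT}.
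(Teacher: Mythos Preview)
Your proof is correct and follows exactly the approach the paper intends: the paper simply states that the Corollary is ``a straightforward application of the above Lagrange Inversion Theorem,'' and your argument---applying \eqref{eq:LIT} with $b=0$, $a=F(0)=0$, $F(y)=y/f(y)$, checking $F'(0)=1/f(0)\ne 0$, and noting that $y/F(y)=f(y)$---is precisely that application.
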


\end{appendix}


\providecommand{\bysame}{\leavevmode\hbox to3em{\hrulefill}\thinspace}

\bibliographystyle{amsplain}

\end{document}